\renewcommand{\norm}[1]{\|#1\|}
\newcommand{\normsup}[1]{\left\|#1\right\|_{\scriptscriptstyle\infty}}
\newcommand{\given}{\,|\,}
\newcommand{\dTV}{\mathrm{d}_{\mathrm{TV}}}
\newcommand{\asEq}{\stackrel{\mathrm{a.s.}}{=}}
\newcommand{\lawEq}{\stackrel{\mathrm{law}}{=}}
\newcommand{\refBloc}{\mathrm{ref}}
\newcommand{\bnd}{\mathrm{bnd}}
\newcommand{\bulk}{\mathrm{bulk}}
\newcommand{\ext}{\mathrm{ext}}
\newcommand{\MarkovSet}{\mathrm{Ma}}
\newcommand{\MarkovEl}{\mathrm{a}}
\newcommand{\Cmix}{C_{\mathrm{mix}}}
\newcommand{\cmix}{c_{\mathrm{mix}}}
\newcommand{\pbulk}{p_{\mathrm{bulk}}}
\newcommand{\good}{\mathrm{G}}
\newcommand{\Aand}{\textnormal{\textbf{ and }}}
\newcommand{\HardCore}{\mathrm{HC}}
\newcommand{\N}{\mathbb{N}}
\newcommand{\Z}{\mathbb{Z}}
\newcommand{\R}{\mathbb{R}}
\newcommand{\bbE}{\mathbb{E}}
\newcommand{\bbL}{\mathbb{L}}
\newcommand{\bbN}{\mathbb{N}}
\newcommand{\rmB}{\mathrm{B}}
\newcommand{\rmP}{\mathrm{P}}
\newcommand{\rmb}{\mathrm{b}}
\newcommand{\rmd}{\mathrm{d}}
\newcommand{\rme}{\mathrm{e}}
\newcommand{\rmi}{\mathrm{i}}
\newcommand{\rmp}{\mathrm{p}}
\newcommand{\rms}{\mathrm{s}}
\newcommand{\rmE}{\mathrm{E}}
\newcommand{\rmW}{\mathrm{W}}
\newcommand{\calA}{\mathcal{A}}
\newcommand{\calB}{\mathcal{B}}
\newcommand{\calC}{\mathcal{C}}
\newcommand{\calD}{\mathcal{D}}
\newcommand{\calF}{\mathcal{F}}
\newcommand{\calG}{\mathcal{G}}
\newcommand{\calJ}{\mathcal{J}}
\newcommand{\calM}{\mathcal{M}}
\newcommand{\calP}{\mathcal{P}}
\newcommand{\calR}{\mathcal{R}}
\newcommand{\calS}{\mathcal{S}}
\newcommand{\calT}{\mathcal{T}}
\newcommand{\calV}{\mathcal{V}}
\newcommand{\calW}{\mathcal{W}}
\newcommand{\calX}{\mathcal{X}}
\theoremstyle{plain}
\newtheorem{theorem}{Theorem}[section]
\newtheorem{lemma}[theorem]{Lemma}
\newtheorem{remark}{Remark}[section]
\newtheorem{claim}{Claim}
\theoremstyle{definition}
\newtheorem{definition}{Definition}
\newtheorem*{definition*}{Definition}
\newtheorem{example}{Example}[section]
\title{A new perspective on the equivalence between Weak and Strong Spatial Mixing in two dimensions}
\author[1]{S\'{e}bastien Ott\thanks{ott.sebast@gmail.com}}
\affil[1]{Institute of Mathematics, EPFL}
\date{\today}
\begin{document}

	\maketitle
	
	\begin{abstract}
		Weak mixing in lattice models is informally the property that ``information does not propagate \emph{inside} a system''. Strong mixing is the property that ``information does not propagate \emph{inside and on the boundary of} a system''. In dimension two, the boundary of reasonable systems is one dimensional, so information should not be able to propagate there. This led to the conjecture that in 2D, weak mixing implies strong mixing. The question was investigated in several previous works, and proof of this conjecture is available in the case of finite range Gibbsian specifications, and in the case of nearest-neighbour FK percolation (under some restrictions). The present work gives a new proof of these results, extends the family of models for which the implication holds, and, most interestingly, provides a ``percolative picture'' of the information propagation.
	\end{abstract}
	
	\setcounter{tocdepth}{1}
	\tableofcontents
	
	\section{Introduction and results}
	
	\subsection{Mixing in lattice models}
	
	Mixing in lattice models has a long history that will not be entirely reviewed here. I will focus on the parts of the picture relevant for the present paper. A classical quantitative notion of spatial mixing is the notion of \emph{weak mixing}. Informally: exponential weak mixing says that for finite volume measures, \(\mu_{\Lambda}\), one has an estimate of the following form:
	\begin{equation*}
		\dTV\big(\mu_{\Lambda}(X_{\Delta_1}\in \cdot \given X_{\Delta_2} = a), \mu_{\Lambda}(X_{\Delta_1}\in \cdot \given X_{\Delta_2} = b)\big) \leq C\sum_{x\in \Delta_1}\sum_{y\in \Delta_2\cup \Lambda^c} e^{-c|x-y|}.
	\end{equation*}where \(\dTV\) is the total variation distance, \(\Delta_1,\Delta_2\subset \Lambda\). In words: the dependency of the field restricted to \(\Delta_1\) on the field restricted to \(\Delta_2\) is decaying exponentially in the distance between \(\Delta_1\) and \emph{the closest between \(\Delta_2\) and \(\Lambda^c\)}. In other words, it allows for arbitrarily far information transfer through the boundary of the system.
	
	Forbidding this information transfer is obtained using the stronger notion of \emph{exponential strong mixing} which is an estimate of the form
	\begin{equation*}
		\dTV\big(\mu_{\Lambda}(X_{\Delta_1}\in \cdot \given X_{\Delta_2} = a), \mu_{\Lambda}(X_{\Delta_1}\in \cdot \given X_{\Delta_2} = b)\big) \leq C\sum_{x\in \Delta_1}\sum_{y\in \Delta_2} e^{-c|x-y|}.
	\end{equation*}
	This stronger notion, also called \emph{complete analyticity} (CA), is extremely powerful: indeed, it has a wealth of equivalent formulations in terms of analytic properties of the model, existence of expansions for correlation functions, and fast relaxation of the Glauber dynamic associated to the model, see \cite{Dobrushin+Shlosman-1985,Dobrushin+Shlosman-1987,Dobrushin+Warstat-1990,Stroock+Zegarlinski-1992,Yoshida-1997}. This notion is relatively easy to establish in many models under a ``very high temperature'' assumption, using cluster expansion. Non-perturbative results are much harder to obtain, and usually require a lot of additional structure. Two famous cases are: 1) the monomer-dimer model on \(\Z^d\),~\cite{vandenBerg-1999}, and 2) the Ising model on \(\Z^d\) above the critical temperature,~\cite{Ding+Song+Sun-2023}.
	
	A very special case is the case of dimension two. Indeed, in this case, the boundary of a (reasonable) system is one-dimensional, and one dimensional systems famously have systematic exponential decay of every kind of correlations one can think of. So, it is natural to expect that systems satisfying weak mixing (exponential decay of information in the bulk) in dimension two (with a one-dimensional boundary left to perform information transfer) also satisfy the strong mixing condition.
	Note that one has first to make sense of ``reasonable system'' to be able to expect anything: if a volume boundary is too fractal at the scale of the correlation length of the system (or even worse: at the scale of the interaction!), there is no hope that ``weak implies strong'' will hold generically. The relevant notion is called \emph{restricted complete analyticity} (RCA): in terms of mixing, it asks that strong mixing holds for disjoints unions of sufficiently large squares. See~\cite{Martinelli-1999, Martinelli+Olivieri+Schonmann-1994} for more details on this notion, and the discussion in the introduction of~\cite{vanEnter+Fernandez+Schonmann+Shlosman+1997} for more on the failure of the full CA in models as nice as the Potts or Ising models on \(\Z^2\) at values of the parameters for which weak mixing holds.
	
	The equivalence between weak mixing and RCA was established in a large family of systems (finite range Gibbsian specifications) in~\cite{Martinelli+Olivieri+Schonmann-1994}, based on the dynamical criterion of~\cite{Stroock+Zegarlinski-1992}. Strong mixing for more particular models in dimension two under an ``off-criticality assumption'' was established in~\cite{Schonmann+Shlosman-1995} (Ising models, 28 years before~\cite{Ding+Song+Sun-2023}),~\cite{vanEnter+Fernandez+Schonmann+Shlosman+1997} (RCA for Potts model on \(\Z^2\) with large enough \(q\) above \(T_c\)), and~\cite{Alexander-2004} (Potts models, and FK-percolation on \(\Z^2\), under a simply connected assumption on the volume). In the case of FK percolation, one additionally has to be careful with what is meant by ``strong mixing'', and by ``one-dimensional boundary conditions'', as FK-percolation has non-local weights (see the discussion in the Introduction of~\cite{Alexander-2004}).
	
	The present paper has two main objectives: 1) in the spirit of~\cite{Alexander-2004}, one goal is to find models beyond finite range Gibbsian specifications for which strong mixing holds, 2) obtain a ``true'' percolative picture of how information is transmitted through the system. The main result of this work is therefore a bound relating dependency between different parts of a given volume and a percolation event in a \emph{very subcritical Bernoulli percolation model with inhomogeneities at the boundary of the system} (which are one-dimensional and therefore do not break exponential decay of connectivities).
	
	Sections~\ref{sec:applications:FRGibbs},~\ref{sec:applications:FK}, and~\ref{sec:applications:HC} respectively give applications of the main result to finite range Gibbsian specifications, finite range FK percolation on \(\Z^2\), and hard core models on \(\Z^2\). Application to the Random Cluster representation of the Ashkin-Teller model is discussed in~\cite{Dober+Glazman+Ott-2025}.
	
	It is also worth stressing that once strong mixing is established, one can use the beautiful arguments of~\cite[section 5]{Alexander-1998} as in~\cite[section 3]{Alexander-2004} to improve this to \emph{ratio strong mixing} (see the first section of~\cite{Alexander-2004}). Indeed, the hypotheses of the present work imply that the model has exponentially bounded blocking/controlling regions in the sense of~\cite{Alexander-1998,Alexander-2004} which is the key to go from strong mixing to ratio strong mixing. This particular direction being well explored in~\cite{Alexander-1998,Alexander-2004}, the focus of the present work will be limited to establishing strong mixing estimates.

	\subsection{Notation, constants, surrounding sets}
	
	\vspace*{3pt}
	
	\noindent\textbf{Constants}

	\vspace*{3pt}
	
	Constants like \(c,C,C',\dots\) are allowed to change value from line to line. Their dependency should be clear from the context. They never depend on the parameters that are allowed to vary in the course of a proof.
	
	\vspace*{3pt}
	
	\noindent\textbf{Distances, sets and boundary}
	
	\vspace*{3pt}
	
	 For \(a<b\in \R\), denote
	\begin{equation*}
		\llbracket a,b \rrbracket = \{n\in \Z:\ a\leq n\leq b\}.
	\end{equation*}
	
	Denote \(\sqcup\) for the union between disjoint sets. For \(x=(x_1,x_2)\in \R^2\), \(|x| = \sqrt{x_1^2+x_2^2}\) is the Euclidean norm, \(\rmd_2\) is the associated distance, \(\normsup{x} = \max(|x_1|,|x_2|) \) is the sup-norm of \(x\), and \(\rmd_{\infty}\) is the associated distance. Denote \(\rme_1 = (1,0)\), \(\rme_2 =(0,1)\).
	
	Denote \(\Lambda\Subset\Z^2\) for \(\Lambda\subset \Z^2\) finite and connected.
	Denote also \(\Lambda_n = \{-n,\dots, n\}^2\), and, for \(x\in \Z^2\), \(\Delta,\Lambda\subset \Z^2\),
	\begin{equation*}
		\Delta(x) = x+\Delta,\quad \Delta(\Lambda) = \bigcup_{x\in \Lambda} \Delta(x).
	\end{equation*}
	For \(\Lambda\Subset \Z^2\), define (see Figure~\ref{Fig:Lambda_and_inner_bnd})
	\begin{equation*}
		\partial^{\rmi} \Lambda = \{x\in \Lambda:\ \exists y\in \Lambda^c\ \norm{x-y}_{\infty} = 1 \},\quad
		\mathring{\Lambda} = \Lambda\setminus \partial^{\rmi}\Lambda.
	\end{equation*}Note that this is not the usual nearest-neighbour graph boundary. In the same spirit, let \(\bbL_K = ((2K+1)\Z)^2\), and for \(\Lambda\Subset \bbL_K\) define
	\begin{equation*}
		\partial^{\rmi} \Lambda = \{x\in \Lambda:\ \exists y\in \Lambda^c\ \norm{x-y}_{\infty} = 2K+1 \}.
	\end{equation*}
	
	\begin{figure}[h]
		\centering
		\includegraphics[scale=0.9]{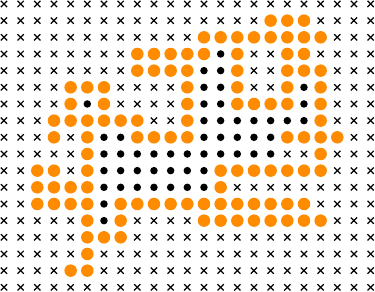}
		\caption{A volume \(\Lambda\) (circular dots), and its inner boundary \(\partial^{\rmi}\Lambda\) (orange dots). Crosses are \(\Z^2\setminus \Lambda\).}
		\label{Fig:Lambda_and_inner_bnd}
	\end{figure}
	
	\vspace*{3pt}
	
	\noindent\textbf{Graphs, paths}
	
	\vspace*{3pt}
	
	When not mentioned otherwise, \(\Z^2\) and its subsets are endowed with the nearest-neighbour (with respect to the Euclidean distance) graph structure, connectivity and paths are with respect to that graph structure (edges are of the form \(\{x,y\}\) with \(|x-y| = 1\)). The connectivity and paths induced by the graph structure given by the nearest-neighbour graph with respect to \(\rmd_{\infty}\) (edges are of the form \(\{x,y\}\) with \(\normsup{x-y} = 1\)) are called \emph{\(*\)-connectivity} and \emph{\(*\)-paths}.
	This notions are transported to \(\bbL_K\) via the natural bijection between \(\Z^2\) and \(\bbL_K\) (\(x\mapsto (2K+1) x\)).
	
	The notion of a \emph{separating set} will also be used. For \(A,B,D\subset \Lambda\Subset \Z^2\), let \(C_1,\dots,C_n\) be the connected components of \(\Lambda\setminus D\). Say that \(D\) \emph{separates \(A\) from \(B\)} if there are \(I,J\subset \{1,\dots,n\}\) such that \(I\cap J=\varnothing\), and \(A\subset\cup_{i\in I}C_i\), \(B\subset \cup_{i\in J}C_i\). The same notion transports to \(\bbL_K\).
	
	Finally, introduce \emph{surrounding sets}. Let \(G=(V,E)\) be an embedded (in a way that edges do not intersect or self-intersect) planar (multi)graph. See \(G\) as a subset of \(\R^2\) (sites are points, and edges are open continuous curves without intersections). Let \(D\subset V\cup E\). Let \(C_1,C_2,\dots\) be the finite connected components of \(\R^2\setminus D\). The set \(\cup_{k\geq 1} (C_k\cap(V\cup E))\) is the set of sites and edges \emph{surrounded by \(D\)}. For \(A\subset \R^2\) finite with \(A\subset \cup_k C_k\), the set \(\cup_{k\geq 1: C_k\cap A\neq \varnothing} (C_k\cap(V\cup E))\) is the set of sites and edges \emph{\(A\)-surrounded by \(D\)}.
	
	\begin{figure}[h]
		\centering
		\includegraphics{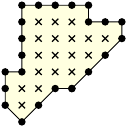}
		\hspace*{3cm}
		\includegraphics[scale=0.8]{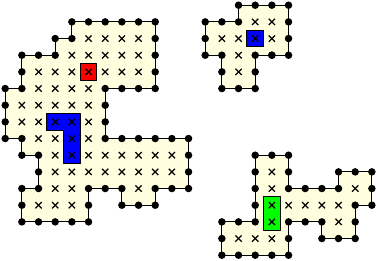}
		\caption{Left: a star-path (black disks) and the set it surrounds (yellow area, the surrounded sites of \(\Z^2\) are crosses). Right: a collection of paths (black disks) with the set they surround (in yellow); blue is separated form green by the paths, red is not separated from blue, and the set of blue-surrounded sites are the crosses inside the left and top right paths.}
		\label{Fig:paths_separation}
	\end{figure}
	
	\vspace*{3pt}
	
	\noindent\textbf{Scales}
	
	\vspace*{3pt}
	
	There will be three scales in the study: \(1\) is the scale of the model (unit lattice scale), \(l\) will be the ``Markov length scale'' of the model (this will be the scale at which the model ``behaves like'' a range one model), \(K\) will be the scale of the ``coarse-grained model'' that will be at the core of the analysis.
	
	\subsection{Models}
	\label{sec:models}
	
	\vspace*{3pt}
	
	\noindent\textbf{Families of measures}
	
	\vspace*{3pt}
	
	Let \(\Omega_x\), \(x\in \Z^2\), be finite or countable sets: the \emph{spin spaces}. For \(\Lambda\subset \Z^2\), let \(\Omega_{\Lambda} = \bigtimes_{x\in \Lambda}\Omega_x\), and let \(\calF_{\Lambda}\) be the \(\sigma\)-algebra generated by finitely supported events with support in \(\Lambda\). Let \(\Omega = \Omega_{\Z^2}\), \(\calF=\calF_{\Z^2}\).
	
	The objects under study are collections of probability measures \(\nu_{\Lambda}^t\), \(\Lambda\Subset \Z^2\), \(t\in T\), where \(T\) is an index set (one can think of \(t\) as boundary conditions). \(\nu_{\Lambda}^t\) is then a probability measure on \((\Omega_{\Lambda},\calF_{\Lambda})\). \(\sigma\) will denote a random variable of law \(\nu_{\Lambda}^t\). For \(\Delta\subset \Z^2\), \(\sigma_{\Delta}\) denotes the restriction of \(\sigma\) to \(\Delta\).
	
	For \(\Delta\subset \Lambda\Subset \Z^2\), \(t\in T\), and \(\xi\in \Omega_{\Lambda\setminus \Delta}\) such that \(\nu_{\Lambda}^t(\sigma_{\Lambda\setminus \Delta} =\xi)>0\), denote
	\begin{equation}
		\nu_{\Lambda,\Delta}^{t,\xi}(\cdot ) := \nu_{\Lambda}^t(\cdot \given \sigma_{\Lambda\setminus \Delta} =\xi).
	\end{equation}Also denote \(P|_{\Delta}\) for the restriction (projection) of the measure \(P\) to \(\calF_{\Delta}\).
	
	\begin{example}
		Classical examples to keep in mind are Gibbsian specifications where \(T= \Omega\) is the set of boundary conditions, or FK percolation (also with \(T=\Omega\)). But also FK percolation with modified coupling constants along the boundary of the system, in that case \(T\) is the set of pairs ``boundary conditions + values of the modified parameters''.
	\end{example}
	
	\vspace*{3pt}
	
	\noindent\textbf{Blocked model}
	
	\vspace*{3pt}
	
	A central notion is the one of \emph{blocked model}: for \(l\geq 0\), let
	\begin{equation*}
		l'=2l+1,\quad \bbL_l = (l'\Z)^2,\qquad \Lambda_l(x) = x+\{-l,\dots, l\}^2.
	\end{equation*}
	For \(\Lambda\subset \Z^2\), define the bijection
	\begin{equation*}
		\psi_{\Lambda}: \Omega_{\Lambda}\to \bigtimes_{u\in \bbL_l: \Lambda_l(u)\cap \Lambda\neq \varnothing} \Omega_{\Lambda_l(u)\cap \Lambda},
		\qquad
		\big(\psi_{\Lambda}(\omega)\big)_u = (\omega_{x})_{x\in \Lambda_l(u)\cap \Lambda}.
	\end{equation*}
	Slightly abusing notation, the subscript \(\Lambda\) will be dropped and should be clear from the context.
	
	\noindent In the same spirit, denote
	\begin{equation}
		X = \psi(\sigma),
	\end{equation}to mean ``if \(\sigma\sim \nu_{\Lambda}^t\), then \(X=\psi_{\Lambda}(\sigma)\)''.

	\vspace*{3pt}
	
	\noindent\textbf{Decoupling configurations}
	
	\vspace*{3pt}
	
	For \(\Lambda\Subset \Z^2,t\in T\), \(D\subset \Lambda\), \(\xi\in \Omega_{D}\), say that \(\xi\) is a \emph{\(D\)-decoupling configuration of \(\nu_{\Lambda}^t\)} if one has that, if \(\sigma\sim \nu_{\Lambda,\Lambda\setminus D}^{t,\xi}\), for any \(A,B\subset \Lambda\) such that \(D\) separates \(A\) from \(B\) in \(\Lambda\), \(\sigma_{A}\) and \(\sigma_B\) are independent.
	
	\subsection{Hypotheses}
	
	The hypotheses are divided into two sets: the first concern mixing (a generalisation of weak mixing), the second concern a weakening of Markov's property.
	
	\subsubsection*{Mixing Hypotheses}
	
	The first mixing hypotheses is the existence of a unique ``infinite volume measure'' compatible with the model under consideration. It is implied by the other mixing hypotheses, but it is nice to have it spelled separately as is gives a notation for a reference measure.
	\begin{enumerate}[label= (Mix\arabic*)]
		\renewcommand{\theenumi}{(Mix\arabic{enumi})}
		\item\label{hyp:Mix:inf_vol_meas} There is a (unique) probability measure \(\nu\) on \((\Omega,\calF)\) such that for any \(\Delta'\Subset \Z^2\), any \(A\in \calF_{\Delta'}\), any sequence \(\Delta_{k}\Subset\Z^2, k\geq 1\) such that \(\Delta_k\to\Z^2\), and any sequence \(t_k\in T, k\geq 1\),
		\begin{equation*}
			\lim_{k\to \infty} \nu_{\Delta_k}^{t_k}(A) = \nu(A).
		\end{equation*}
	\end{enumerate}
	The second mixing hypotheses is a technical condition which turns out to be a consequence of the ratio weak mixing property for rectangles.
	\begin{enumerate}[resume, label= (Mix\arabic*)]
		\item \label{hyp:Mix:exp_rel_density} \emph{Uniform Exponential Relaxation of Densities}: there are \(\Cmix\geq 0, \cmix>0\) such that, for any \(n,m,k>0\), any \(\Delta_{n,m}\Subset\Z^2\) with \(\Delta_{n,m} = x + \llbracket 0,n \rrbracket\times \llbracket 0,m \rrbracket\) for some \(x\in \Z^2\), 
		\begin{equation*}
			\sum_{\omega\in \Omega_{\Delta_{n,m}}} \inf_{\Delta\supset \Lambda_{k}(\Delta_{n,m})}\inf_{t,\xi} \nu_{\Delta,\Lambda_{k}(\Delta_{n,m})}^{t,\xi}\big(\sigma_{\Delta_{n,m}} = \omega\big) 
			\geq 1- \Cmix nm e^{-\cmix k},
		\end{equation*}where the inf over \(\xi\) is again over \(\xi\) such that \(\nu_{\Delta}^t(\sigma_{\Delta\setminus \Lambda_{k}(\Delta_{n,m})}= \xi)>0\).
	\end{enumerate}
	
	\subsubsection*{Markov-type Hypotheses}
	
	The first hypotheses is the existence of a ``conditional decoupling'' (conditional Markov) property, which is \emph{constituted of chains of local events}. One can have in mind having an open circuit in a annuli in planar FK percolation as example of such local piece of a decoupling event. These hypotheses take in a parameter: \(l\geq 0\) (as before, let \(l'=2l+1\)).
	\begin{enumerate}[label= (Mar\arabic*)]
		\renewcommand{\theenumi}{(Mar\arabic{enumi})}
		\item\label{hyp:Markov:Decoupling_circuits} There are events \(\MarkovSet_u\subset \Omega_{\Lambda_{l+l'}(u)}\), \(u\in \bbL_l\), events \(\MarkovSet_{u,\Lambda}^t\subset \Omega_{\Lambda_{l+l'}(u)\cap \Lambda}\), \(u\in \bbL_l\), \(\Lambda\Subset\Z^2\), \(t\in T\), such that for any \(\Lambda\Subset\Z^2\), \(t\in T\), and connected set \(\gamma\subset \{u\in \bbL_l:\ \Lambda_l(u)\cap \Lambda\neq \varnothing\}\) with \(\Lambda\setminus \Lambda_l(\gamma)\) not connected, one has that every \(\xi\in \Omega_{\Lambda_{l+l'}(\gamma) \cap \Lambda}\) such that
		\begin{equation*}
			\forall u\in \gamma,\quad \xi_{\Lambda_{l+l'}(u) \cap \Lambda} \in \begin{cases}
				\MarkovSet_{u,\Lambda}^t & \text{ if } \Lambda_{l+2l'}(u)\not\subset \Lambda,\\
				\MarkovSet_{u} & \text{ if } \Lambda_{l+2l'}(u)\subset \Lambda,
			\end{cases}
		\end{equation*}is a \(\Lambda_{l+l'}(\gamma)\cap \Lambda\)-decoupling configuration of \(\nu_{\Lambda}^t\).
	\end{enumerate}
	
	The second and third hypotheses quantify the probability of having a decoupling event. One has to think of \(\pbulk\) as being close to \(1\), while \(\theta>0\) can be as small as one wants. The idea behind the second hypotheses is that it allows to use the results of~\cite{Liggett+Schonmann+Stacey-1997} to say that decoupling regions (at a suitable scale) stochastically dominate a large density Bernoulli percolation.
	\begin{enumerate}[resume, label= (Mar\arabic*)]
		\item\label{hyp:Markov:local_Markov_bulk} One has
		\begin{equation*}
			\inf_{u\in \bbL_l}\inf_{\Lambda\supset \Lambda_{l+2l'}(u)}\inf_{t}\inf_{\xi\in \Omega_{\Lambda\setminus \Lambda_{l+l'}(u)}} \nu_{\Lambda,\Lambda_{l+l'}}^{t,\xi}( \sigma_{\Lambda_{l+l'}(u)}\in \MarkovSet_u ) \geq \pbulk,
		\end{equation*}where the inf over \(\xi\) is over \( \xi\) having positive \(\nu_{\Lambda}^t\)-probability, and \(\pbulk\geq 0\) is a control parameter.
	\end{enumerate}
	
	The last hypotheses allows the use of finite energy to perform local surgery. It is possible to relax this hypotheses a bit, but relaxing it makes everything even heavier and I did not find any useful application of a weaker (and sufficient) condition.
	\begin{enumerate}[resume, label= (Mar\arabic*)]
		\item\label{hyp:Markov:local_Markov_finite_energy} For every \(u\in \bbL_l\), there are \(\MarkovEl_u\in \Omega_{\Lambda_{l}(u)}\), \(\MarkovEl_{u,\Lambda}^t\in \Omega_{\Lambda_{l}(u)\cap \Lambda}\), \(\Lambda\Subset\Z^2\), \(t\in T\), such that:
		\begin{enumerate}
			\item for all \(u\in \bbL_l\), one has that if \(\vartheta\in \MarkovSet_{u}\), then \(\tilde{\vartheta} \in \MarkovSet_{u}\) where \(\tilde{\vartheta}\) is any configuration satisfying that for all \(v\in \bbL_l\) with \(\normsup{v-u} \leq l'\), \(\tilde{\vartheta}_{\Lambda_{l}(v)} \in \{\vartheta_{\Lambda_{l}(v)},\MarkovEl_v\}\);
			\item for all \(u\in \bbL_l\), one has that if \(\vartheta\in \MarkovSet_{u,\Lambda}^t\), then \(\tilde{\vartheta} \in \MarkovSet_{u,\Lambda}^t\) where \(\tilde{\vartheta}\) is any configuration satisfying that for all \(v\in \bbL_l\) with \(\normsup{v-u} \leq l'\), \(\tilde{\vartheta}_{\Lambda_l(v)\cap \Lambda} \in \{\vartheta_{\Lambda_l(v)\cap \Lambda},\MarkovEl_{v,\Lambda}^t\}\) if \(\Lambda_{l+l'}(v)\not\subset \Lambda\), and \(\tilde{\vartheta}_{\Lambda_l(v)} \in \{\vartheta_{\Lambda_l(v)\cap \Lambda}, \MarkovEl_v\}\) else;
			\item there is \(\theta>0\) such that
			\begin{gather*}
				\inf_{u}\inf_{\Lambda\supset \Lambda_{l+l'}(u)}\inf_t\inf_{\xi} \nu_{\Lambda,\Lambda_l(u)}^{t,\xi}(\sigma_{\Lambda_l(u)}= \MarkovEl_u) \geq \theta,
				\\
				\inf_{u}\inf_{\Lambda: \Lambda_{l+l'}(u)\cap \Lambda^c\neq \varnothing}\inf_t\inf_{\xi} \nu_{\Lambda,\Lambda_l(u)\cap \Lambda}^{t,\xi}(\sigma_{\Lambda_l(u)\cap \Lambda}= \MarkovEl_{u,\Lambda}^t) \geq \theta.
			\end{gather*}
		\end{enumerate}
	\end{enumerate}
	
	\begin{remark}
		\label{rem:MarkovEl_bnd_implies_MarSet_bnd}
		Note that the existence of the elements \(\MarkovEl_{x,\Lambda}^t,\MarkovEl_{x}\) in~\ref{hyp:Markov:local_Markov_finite_energy} implies the existence of the sets \(\MarkovSet_{x}, \MarkovSet_{x,\Lambda}^t\) (but not the lower bound of~\ref{hyp:Markov:local_Markov_bulk}).
	\end{remark}
	
	\begin{remark}
		If the model has the Markov property for the nearest-neighbour graph structure and a (weak form of) finite energy, Hypotheses~\ref{hyp:Markov:Decoupling_circuits},~\ref{hyp:Markov:local_Markov_bulk},
		and~\ref{hyp:Markov:local_Markov_finite_energy} are immediately satisfied for any \(l\geq 0\) (with \(\pbulk=1\)). The same is true for range-one models (Markov property for the \(*\)-graph structure).
	\end{remark}
	
	\subsection{Main Result}
	
	For \(\ell\geq 1\), let \(\ell'=2\ell +1\), and denote \(\bbL_{\ell} = (\ell'\Z)^2\) so that \(\Z^2 = \bigsqcup_{i\in \bbL_{\ell}} \Lambda_{\ell}(i)\). For \(\bar{p}\in [0,1]^{\bbL_{\ell}}\), \(\Lambda\subset \Z^2\), denote \(\omega\sim P_{\Lambda;\ell,\bar{p}}\) the site percolation model on \(\Z^2\) obtained by taking \(X_i,i\in \bbL_{\ell}\) an independent family of random variables with \(X_i\sim \mathrm{Bern}(p_i)\) and setting
	\begin{equation*}
		\omega(x) = \begin{cases}
			0 & \text{ if } x\in \Lambda^c,\\
			X_i & \text{ if } x\in \Lambda \cap \Lambda_{\ell}(i).
		\end{cases}
	\end{equation*}
	
	The main result of this paper is the following statement.
	\begin{theorem}
		\label{thm:main}
		Let \((\nu_{\Lambda}^t)_{\Lambda,t}\) be as described in Section~\ref{sec:models}. Suppose that the mixing Hypotheses~\ref{hyp:Mix:inf_vol_meas}, and~\ref{hyp:Mix:exp_rel_density} with \(\Cmix\geq 0, \cmix >0\) hold. There is \(p_0=p_0(\cmix)\in (0,1)\) such that if, for some \(l\geq 0\), Hypotheses~\ref{hyp:Markov:Decoupling_circuits},~\ref{hyp:Markov:local_Markov_bulk} with \(\pbulk\geq p_0\), and~\ref{hyp:Markov:local_Markov_finite_energy} with \(\theta>0\) all hold, then, for any \(p\in (0,1)\), there is \(\ell\geq 1\), and \(q\in (0,1)\) such that for any \(\Lambda\Subset \Z^2\), \(t\in T\), \(\Delta_1,\Delta_2\subset \Lambda\), and \(\xi,\xi'\in \Omega_{\Delta_1}\) with \(\nu_{\Lambda}^t(\sigma_{\Delta_2} = \xi)\nu_{\Lambda}^t(\sigma_{\Delta_2} = \xi')>0\),
		\begin{equation*}
			\dTV\big(\nu_{\Lambda}^t(\sigma_{\Delta_1}\in \cdot \given \sigma_{\Delta_2} = \xi),\nu_{\Lambda}^t(\sigma_{\Delta_1}\in \cdot \given \sigma_{\Delta_2} = \xi')\big) \leq P_{\Lambda;\ell,\bar{p}}\big( \Delta_2 \leftrightarrow_* \Delta_1\big),
		\end{equation*}where \(\leftrightarrow_*\) means \(*\)-connected, and \(\bar{p}\) is given by
		\begin{equation*}
			p_i = p_i(\Lambda,\Delta_2) = \begin{cases}
				1 & \text{ if } \Lambda_{\ell}(i) \cap \Delta_2 \neq \varnothing,\\
				p & \text{ if } \Lambda_{\ell+\ell'}(i) \subset  \Lambda\setminus\Delta_2,\\
				0 & \text{ if } \Lambda_{\ell}(i) \subset \Lambda^c,\\
				q & \text{ else }.
			\end{cases}
		\end{equation*}
	\end{theorem}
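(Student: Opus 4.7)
The plan is to build a coarse-grained percolation model at a scale \(\ell \gg l\) in which the bad sites mark positions where the decoupling structure provided by~\ref{hyp:Markov:Decoupling_circuits}--\ref{hyp:Markov:local_Markov_finite_energy} fails. Concretely, for each \(i\in \bbL_{\ell}\) I would define a ``good'' event \(\rmG_i\) depending only on \(\sigma\) in a small enlargement of \(\Lambda_{\ell}(i)\) and expressing that the \(l\)-lattice inside \(\Lambda_{\ell}(i)\) carries a suitable structure of \(l\)-blocks \(u\) with \(\sigma_{\Lambda_{l+l'}(u)}\in \MarkovSet_u\) (or \(\MarkovSet_{u,\Lambda}^t\) for \(l\)-blocks abutting \(\Lambda^{\comp}\)), arranged so that a regular connected subset \(\calR\subset \{i\in \bbL_{\ell}:\rmG_i\}\) of coarse cells separating \(\Delta_1\) from \(\Delta_2\) glues into a connected set \(\gamma\subset \bbL_l\) to which~\ref{hyp:Markov:Decoupling_circuits} applies. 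By planar duality, such an \(\calR\) exists precisely when the bad set \(\{i:\rmG_i^{\comp}\}\) contains no \(*\)-path from the coarse cells touching \(\Delta_2\) to those touching \(\Delta_1\). In that case, conditioning on \(\sigma\) along \(\gamma\) forces the two measures \(\nu_{\Lambda}^t(\cdot\given \sigma_{\Delta_2}=\xi)\) and \(\nu_{\Lambda}^t(\cdot\given \sigma_{\Delta_2}=\xi')\) to coincide on \(\calF_{\Delta_1}\), so that averaging over \(\gamma\) bounds the TV in the statement by the \(\nu_{\Lambda}^t\)-probability that bad coarse cells \(*\)-connect \(\Delta_2\) to \(\Delta_1\).

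The next step is to bound this probability by \(P_{\Lambda;\ell,\bar p}(\Delta_2\leftrightarrow_* \Delta_1)\) through stochastic domination. For a cell \(i\) well inside \(\Lambda\setminus \Delta_2\), namely with \(\Lambda_{\ell+\ell'}(i)\subset \Lambda\setminus \Delta_2\), hypothesis~\ref{hyp:Markov:local_Markov_bulk} grants \(\nu_{\Lambda,\Lambda_{l+l'}(u)}^{t,\eta}(\sigma\in \MarkovSet_u)\geq \pbulk\) for every \(l\)-block \(u\) lying in \(\Lambda_{\ell+\ell'}(i)\) (since then \(\Lambda_{l+2l'}(u)\subset \Lambda\) and \(\Lambda_{l+l'}(u)\cap \Delta_2=\varnothing\)), uniformly in the admissible outside configuration \(\eta\). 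The events \(\{\sigma\in \MarkovSet_u\}\) depend on \(\sigma\) only on \(\Lambda_{l+l'}(u)\), so the family \((\mathds{1}\{\sigma\in\MarkovSet_u\})_u\) is finite-range dependent. Provided \(\pbulk\) exceeds the threshold appearing in~\cite{Liggett+Schonmann+Stacey-1997} (this fixes \(p_0\)), this family stochastically dominates, conditionally on any admissible outside configuration, an independent Bernoulli field with density arbitrarily close to \(1\); a standard very-subcritical-dual planar argument then pushes \(\nu_{\Lambda}^t(\rmG_i^{\comp}\given \sigma_{\Lambda_{\ell+\ell'}(i)^{\comp}})\) below any prescribed \(p>0\) once \(\ell\) is taken large enough depending on \(p\), uniformly in \(\Lambda,t\), in the outside configuration, and therefore also conditionally on \(\sigma_{\Delta_2}=\xi\).

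For the remaining boundary cells (those with \(\Lambda_{\ell}(i)\cap \Delta_2=\varnothing\), \(\Lambda_{\ell}(i)\not\subset \Lambda^{\comp}\), and \(\Lambda_{\ell+\ell'}(i)\not\subset \Lambda\setminus \Delta_2\)) the uniform lower bound of~\ref{hyp:Markov:local_Markov_bulk} is unavailable. Here I would combine the uniform finite-energy bound \(\theta\) from~\ref{hyp:Markov:local_Markov_finite_energy} (which provides a \(\MarkovEl_u\) or \(\MarkovEl_{u,\Lambda}^t\)-configuration on a single \(l\)-block with probability at least \(\theta\), conditionally on any admissible outside) with the uniform exponential relaxation~\ref{hyp:Mix:exp_rel_density} to replace the conditioning by \(\sigma_{\Delta_2}=\xi\) and by \(t\) by an essentially unconditioned measure whenever the boundary cell sits at coarse distance from that conditioning, thereby importing the bulk lower bound on the density of \(\MarkovSet\)-sites. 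This gives a uniform bound \(\nu_{\Lambda}^t(\rmG_i^{\comp}\given\sigma_{\Delta_2}=\xi)\leq q<1\) on boundary cells. A final application of~\cite{Liggett+Schonmann+Stacey-1997} to the finite-range field \((\mathds{1}_{\rmG_i^{\comp}})_{i\in \bbL_{\ell}}\) then delivers stochastic domination by \(P_{\Lambda;\ell,\bar p}\) and closes the proof. In my view the main obstacle is this boundary analysis: one must define \(\rmG_i\) flexibly enough to let its decoupling \(l\)-structure mix interior sites (governed by \(\MarkovSet_u\)) with sites abutting \(\Lambda^{\comp}\) (governed by \(\MarkovSet_{u,\Lambda}^t\)), and must extract from~\ref{hyp:Mix:exp_rel_density} a bound on the conditional probability of this mixed structure that does not deteriorate with \(\Lambda\), \(\xi\) or \(t\).
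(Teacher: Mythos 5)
The central step of your argument --- ``conditioning on $\sigma$ along $\gamma$ forces the two measures to coincide on $\calF_{\Delta_1}$, so that averaging over $\gamma$ bounds the TV by the probability that bad cells $*$-connect $\Delta_2$ to $\Delta_1$'' --- has a genuine gap. It is true that, for a fixed decoupling configuration $\eta$ on a fixed separating set $\gamma$, both conditioned measures agree on $\calF_{\Delta_1}$ by~\ref{hyp:Markov:Decoupling_circuits}. But when you decompose $\nu_{\Lambda}^t(\sigma_{\Delta_1}\in\cdot\given\sigma_{\Delta_2}=\xi)$ over the pairs $(\gamma,\eta)$, the weights $\nu_{\Lambda}^t(\gamma\text{ is the circuit},\,\sigma_{\gamma}=\eta\given\sigma_{\Delta_2}=\xi)$ themselves depend on $\xi$, and nothing in your argument controls the difference of these weights between $\xi$ and $\xi'$. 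A total-variation bound requires a \emph{coupling} under which, with high probability, the \emph{same} circuit $\gamma$ carrying the \emph{same} configuration $\eta$ is produced for both boundary conditions simultaneously; showing that a good separating structure is likely under each measure separately is not enough. This is exactly what the paper's patch-exploration construction supplies: blocks are sampled by functions $f_{a,\Delta}^{\zeta}$ whose output is independent of the past (hence of $\xi$) whenever the uniform variable falls below the common-mass threshold $S_a^*$, and Hypothesis~\ref{hyp:Mix:exp_rel_density} is used precisely to show that this threshold is close to $1$ --- not merely to bound the probability of good blocks, which is the only use you make of it.

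Two further points would need repair even granting a coupling. First, your event $\rmG_i$ must guarantee that the decoupling $l$-blocks in \emph{adjacent} good coarse cells glue into a single connected $\gamma\subset\bbL_l$, as~\ref{hyp:Markov:Decoupling_circuits} requires; large $\MarkovSet$-clusters in neighbouring cells need not touch, and the paper forces junctions by finite-energy surgery along canonical paths, paying $\theta^{O(L)}$ per junction and recovering it through the entropy of the $r^L$ micro-edge-blocks (whence the condition $r\geq 2\theta^{-4}$). Your proposal contains no gluing mechanism. Second, on boundary cells it does not suffice to lower-bound the probability of a decoupling structure: the coupled configurations must \emph{agree} there, which the paper achieves by forcing the deterministic elements $\MarkovEl_{u,\Lambda}^t$ of~\ref{hyp:Markov:local_Markov_finite_energy} (so agreement is automatic on the finite-energy event), not by a probabilistic estimate; moreover your appeal to~\ref{hyp:Mix:exp_rel_density} near $\partial^{\rmi}\Lambda$ cannot remove the conditioning by $t$, which sits at distance zero from such cells.
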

	
	This result is obtained from a slightly finer upper bound, but I do not believe that the finer result has a real added value compare to this one, and this statement is easier to digest. I also believe that a higher dimensional version of this result should hold (indeed: there the lack of strong mixing is manifested by the percolation of information along the boundary, which potentially has arbitrarily large percolation parameter).
	
	Strong mixing then follows for nice enough volumes: the percolation model \(P_{\Lambda;\ell,\bar{p}}\) is very subcritical in the bulk and, when the system has a ``nicely one-dimensional boundary'', the inhomogeneities close to the boundary cannot destroy this decay. In the applications, the case of large squares is considered as it is easy to show exponential decay there (see Lemma~\ref{lem:exp_dec_inhomo_bnd}). But the result also holds with more complicated volumes. As far as applying the result is concerned, I believe Theorem~\ref{thm:main} to be more convenient than putting some a priori restriction on the class of volume considered, and stating exponential decay for that class. Volumes under consideration might vary a great deal depending on what one is trying to do. Compared to the class of volumes considered in~\cite{Alexander-2004}, which give strong mixing for simply connected volumes in the case of planar FK percolation, the present criterion is a priori neither weaker nor stronger: indeed, it is easy to find not simply connected regions for with it is easy to establish exponential decay under \(P_{\Lambda;\ell,\bar{p}}\), and therefore apply Theorem~\ref{thm:main}, while it is not clear (and even wrong!) that \(P_{\Lambda;\ell,\bar{p}}\) has exponential decay of connectivities uniformly over simply connected \(\Lambda\). For the reader interested in less simple applications of Theorem~\ref{thm:main}, the arguments of~\cite[Section 2]{Alexander-2004} permit to study exponential decay of connections probability under \(P_{\Lambda;\ell,\bar{p}}\) for various family of volumes.
	
	\subsection{Relaxing the mixing hypotheses}
	
	Using the results of~\cite{Alexander-1998}, one can relax Hypotheses~\ref{hyp:Mix:exp_rel_density}. This is the content of the next Theorem. The result imported from~\cite{Alexander-1998} (which is 99\% of the proof of the next Theorem) is summarized in Lemma~\ref{lem:app:mixing_to_ratioMixing} (more precisely, Lemma~\ref{lem:app:mixing_to_ratioMixing} is an abstract formulation of part of~\cite[Section 5]{Alexander-1998}, and the rest of the proof is a simplification of an argument of~\cite[Section 5]{Alexander-1998}).
	\begin{theorem}
		\label{thm:weakMix_to_ratioWeakMix}
		Suppose there are \(C_*\geq 0,c_*>0\) such that for any \(\Delta\subset \Delta'\Subset\Z^2\), any \(\Delta'\subset\Lambda_1,\Lambda_2\Subset \Z^2\), and any \(\xi_i\in \Omega_{\Lambda_i\setminus\Delta'}\) with \(\nu_{\Lambda_i}^{t_i}(X_{\Lambda_i\setminus \Delta'} = \xi_i)>0\),
		\begin{equation}
			\label{eq:weak_mix}
			\rmd_{\mathrm{TV}}\big(\nu_{\Lambda_1,\Delta'}^{t_1,\xi_1}|_{\Delta}, \nu_{\Lambda_2,\Delta'}^{t_2,\xi_2}|_{\Delta}\big) \leq C_*\sum_{x\in \Delta}\sum_{y\in (\Delta')^c}e^{-c_*\normsup{x-y}}.
		\end{equation}Suppose moreover that Hypotheses~\ref{hyp:Markov:Decoupling_circuits}, and~\ref{hyp:Markov:local_Markov_bulk} with \(\pbulk\) close enough to \(1\) hold (independently of the value of \(C_*\geq 0,c_*>0\)). Then, Hypotheses~\ref{hyp:Mix:inf_vol_meas} and~\ref{hyp:Mix:exp_rel_density} hold.
	\end{theorem}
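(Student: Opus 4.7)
The argument splits: \ref{hyp:Mix:inf_vol_meas} follows from the weak mixing bound~\eqref{eq:weak_mix} alone, while \ref{hyp:Mix:exp_rel_density} is obtained by upgrading~\eqref{eq:weak_mix} to a ratio-type mixing estimate using a random decoupling circuit produced by Hypotheses~\ref{hyp:Markov:Decoupling_circuits} and~\ref{hyp:Markov:local_Markov_bulk}. This upgrading is the content of Lemma~\ref{lem:app:mixing_to_ratioMixing} and is a simplification of the argument of~\cite[Section 5]{Alexander-1998}. For~\ref{hyp:Mix:inf_vol_meas}, I would fix $\Delta' \Subset \Z^2$ and, for any $\Lambda_1, \Lambda_2 \Subset \Z^2$ containing $\Lambda_r(\Delta')$, condition each $\nu_{\Lambda_i}^{t_i}$ on its restriction to $\Lambda_i \setminus \Lambda_r(\Delta')$ and apply~\eqref{eq:weak_mix} atomwise. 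This gives
\begin{equation*}
\dTV\big(\nu_{\Lambda_1}^{t_1}|_{\Delta'},\, \nu_{\Lambda_2}^{t_2}|_{\Delta'}\big) \leq C\,|\Delta'|\, e^{-c_* r / 2},
\end{equation*}
Cauchy in $r$ along any exhausting sequence, and the compatible family of limits $\nu|_{\Delta'}$, $\Delta' \Subset \Z^2$, extends uniquely to $\nu$ on $(\Omega,\calF)$.

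For~\ref{hyp:Mix:exp_rel_density}, the target is the pointwise ratio bound
\begin{equation*}
\nu_{\Delta, \Lambda_k(\Delta_{n,m})}^{t,\xi}(\sigma_{\Delta_{n,m}} = \omega) \;\geq\; (1 - C nm\, e^{-c k})\, \nu(\sigma_{\Delta_{n,m}} = \omega),
\end{equation*}
uniform in $(\Delta, t, \xi, \omega)$, which after summation over $\omega$ immediately gives~\ref{hyp:Mix:exp_rel_density}. My plan is: by~\ref{hyp:Markov:local_Markov_bulk} with $\pbulk$ close to $1$, the random set $\{u \in \bbL_l : \sigma_{\Lambda_{l+l'}(u)} \in \MarkovSet_u\}$ is $l$-dependent with marginal density $\geq \pbulk$ and, by~\cite{Liggett+Schonmann+Stacey-1997}, dominates a Bernoulli site percolation of arbitrarily large parameter. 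Classical Peierls bounds therefore ensure that, except on an event of probability $\leq C e^{-c k}$, there is a $*$-circuit $\gamma \subset \bbL_l$ of such sites surrounding $\Delta_{n,m}$ at $\rmd_{\infty}$-distance $\geq k/4$ from both $\Delta_{n,m}$ and $\Delta \setminus \Lambda_k(\Delta_{n,m})$. On this event,~\ref{hyp:Markov:Decoupling_circuits} makes $\sigma|_{\Lambda_{l+l'}(\gamma)}$ a decoupling configuration, so the conditional law of $\sigma_{\Delta_{n,m}}$ given $(\gamma, \sigma|_{\Lambda_{l+l'}(\gamma)})$ depends on $(\Delta, t, \xi)$ only through the detached inner finite-volume system. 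Applying~\eqref{eq:weak_mix} inside this inner system against $\nu$ and integrating out $(\gamma, \sigma|_{\Lambda_{l+l'}(\gamma)})$ produces the desired ratio estimate.

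The main obstacle is the conversion of a total variation bound into a \emph{ratio} bound: TV-closeness of $\epsilon$ between two measures does not, in itself, imply pointwise ratio-closeness for rare $\omega$. The argument of~\cite[Section 5]{Alexander-1998} handles this by iterating decoupling at successive scales to boost the TV estimate into a multiplicative one, and Lemma~\ref{lem:app:mixing_to_ratioMixing} packages this iteration given high-probability decoupling circuits. The simplification available under the present hypotheses is that, because $\pbulk$ can be taken arbitrarily close to $1$, the halo $\geq k/4$ in which $\gamma$ lives contributes an exponentially small failure probability that dominates every other error, so a single circuit—rather than nested blocking regions—already suffices to convert the TV bound into the required ratio bound. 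A secondary subtlety is that the reference measure in the conversion step must be the infinite-volume $\nu$ already produced in the first part of the proof, so the two halves of the argument must be executed in the stated order.
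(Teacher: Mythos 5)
Your proposal follows essentially the same route as the paper: an annular decoupling circuit produced via~\ref{hyp:Markov:local_Markov_bulk} and the Liggett--Schonmann--Stacey domination supplies the conditional-equality input, weak mixing~\eqref{eq:weak_mix} supplies the two total-variation inputs, and Lemma~\ref{lem:app:mixing_to_ratioMixing} performs the TV-to-ratio conversion in a single step (no iteration over scales is needed, as you correctly suspect). The only notable difference is that the paper takes as reference measure the \emph{finite-volume} measure $\nu_{\Delta'}^{t_0}$ for a fixed $t_0$ rather than the infinite-volume $\nu$ --- since any probability reference sums to $1$ over $\omega$, this yields~\ref{hyp:Mix:exp_rel_density} directly, makes the two halves of the argument independent of each other, and avoids having to pass the decoupling property of~\ref{hyp:Markov:Decoupling_circuits} to the infinite-volume limit.
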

	\begin{remark}
		The key difference between~\ref{hyp:Mix:exp_rel_density} and the Hypotheses of Theorem~\ref{thm:weakMix_to_ratioWeakMix} is that in the second case one has to deal with \emph{differences of probability measures} whilst the first is about \emph{ratio between ``densities''}.
	\end{remark}
	\begin{proof}[Proof of Theorem~\ref{thm:weakMix_to_ratioWeakMix}]
		Let \(n,m,k>0\). Note that it is sufficient to prove the statement for \(k\geq C\ln(nm)\), \(nm\geq n_0\) for \(C,n_0\) large as one can adapt the value of \(\Cmix\) to fit smaller values of \(k,nm\). Suppose this from now on. Let \(v\in \Z^2\), and set \(\Delta = v + \llbracket 0, n \rrbracket\times \llbracket 0,m \rrbracket\), \(\Delta' = v+ \llbracket -k,n + k \rrbracket\times \llbracket -k,m + k \rrbracket\). Fix some \(t_0\in T\) and let \(\mu \equiv \nu_{\Delta'}^{t_0}\) be the reference measure. Let then \(\Lambda\supset \Delta'\), \(t\in T\), and \(\xi\in \Omega_{\Lambda\setminus\Delta'}\) with \(\nu_{\Lambda}^t(\sigma_{\Lambda\setminus \Delta'} = \xi)>0\). Let
		\begin{equation*}
			W = \Big\{ x:\ \rmd_{\infty}\big(x,(\Delta')^c\big)\geq k/3,\ \rmd_{\infty}\big(x,\Delta\big) 		\geq k/3 \Big\}.
		\end{equation*}
		The goal is to apply Lemma~\ref{lem:app:mixing_to_ratioMixing} to the measures \(\mu|_{W\sqcup \Delta}\) (with marginals \(\mu|_{\Delta}\) and \(\mu|_{W}\)) and \(\nu_{\Lambda,\Delta'}^{t,\xi}|_{W\sqcup \Delta}\) (with marginals \(\nu_{\Lambda,\Delta'}^{t,\xi}|_{\Delta}\) and \(\nu_{\Lambda,\Delta'}^{t,\xi}|_{W}\)).
		
		Let \(l\geq 0\) be the parameter linked to~\ref{hyp:Markov:Decoupling_circuits}, and~\ref{hyp:Markov:local_Markov_bulk}. Let \(D\) be the event that there is a simple closed path of blocks, \(\gamma\subset \bbL_l\), with \(\Lambda_{l+l'}(\gamma)\subset W\) surrounding \(\Delta\) such that \(\sigma_{\Lambda_{l+l'}(u)}\in \MarkovSet_u\) for all \(u\in \gamma\). As \(\pbulk\) is taken large enough, the set
		\begin{equation*}
			\big\{u\in \bbL_l:\ \Lambda_{l+l'}(u)\subset W,\ \sigma_{\Lambda_{l+l'}}\in \MarkovSet_u \big\}
		\end{equation*}stochastically dominates a Bernoulli percolation of large parameter for any of the measures under consideration (by the results of~\cite{Liggett+Schonmann+Stacey-1997}), so that \(D\) has \(\mu|_{W}\)- and \(\nu_{\Lambda,\Delta'}^{t,\xi}|_W\)-probability at least \(1-Cnme^{-ck}\) for some constants \(c>0,C\geq 0\) depending only on \(\pbulk\) (and \(l\)). Hypotheses~\ref{hyp:Markov:Decoupling_circuits} then gives the third condition of Lemma~\ref{lem:app:mixing_to_ratioMixing} with \(\epsilon_3= Cnme^{-ck}\).
		
		Now,~\eqref{eq:weak_mix} implies that
		\begin{equation*}
			\rmd_{\mathrm{TV}}\big(\nu_{\Lambda, \Delta'}^{t,\xi}|_{W}, \mu|_W\big) \leq Cnm e^{-ck},
		\end{equation*}for some \(c>0,C\geq 0\) depending only on \(C_*,c_*\) (recall the constraints on \(nm,k\)). This gives the second condition with \(\epsilon_2 = Cnme^{-ck}\). Finally, an additional application of~\eqref{eq:weak_mix} gives the first condition with \(\epsilon_1 = Cnme^{-ck}\) for some \(c>0,C\geq 0\) depending on \(c_*,C_*\). Lemma~\ref{lem:app:mixing_to_ratioMixing} then gives that for any \(\omega\in \Omega_{\Delta}\) with \(\mu(\sigma_{\Delta} = \omega)>0\),
		\begin{equation*}
			\frac{\nu_{\Lambda,\Delta'}^{t,\xi}(\sigma_{\Delta} = \omega)}{\mu(\sigma_{\Delta} = \omega)} \geq 1-Cnm e^{-ck}.
		\end{equation*}These bounds are uniform over \(\xi,\Lambda\), so one obtains that
		\begin{equation*}
			\sum_{\omega\in \Omega_{\Delta}} \inf_{t, \Lambda\supset \Delta'}\inf_{\xi} \nu_{\Lambda,\Delta'}^{t,\xi} \big(\sigma_{\Delta} = \omega\big) 
			\geq (1-Cnme^{-ck}) \sum_{\omega\in \Omega_{\Delta}} \mu(\sigma_{\Delta} = \omega) = 1-Cnme^{-ck},
		\end{equation*}which is~\ref{hyp:Mix:exp_rel_density}.
	\end{proof}
	
	\section{Proof of Theorem~\ref{thm:main}}
	
	\subsection{Mechanism behind the proof}
	
	To get the idea, the proof is described here for a Markovian model (Markovian with respect to the square lattice with n.n. edges, so \(l=0\), \(\pbulk = 1\)).
	The idea of the proof is to construct a coupling between \(\nu_{\Lambda,\Lambda\setminus \Delta_2}^{t,\xi}|_{\Delta_1}\) for different values of \(\xi\). This is done by sampling from \(\nu_{\Lambda,\Lambda\setminus \Delta_2}^{t,\xi}\) using an exploration sampling (exploration sampling are reviewed in Section~\ref{sec:prf_of_main:patch_expl}). The exploration goes as follows: it explores new parts of space by first exploring large blocks of sites away from the already explored region, and then explores paths joining the already explored region and the new block.
	
	The blocks being far from what was explored before, the sampling of the new block is done uniformly over the past (and thus over the value of \(\xi\)) with high probability. In this case, the block is called good. Then, the patching between the new good block and previously explored good blocs is done by exploring paths between the different blocks. The sampling on a given path is done uniformly over the past with positive probability by finite energy, and the procedure is tailored so that there are sufficiently many paths between blocks in order for the entropy of paths to compensate the finite energy cost. In particular, one can find a path joining good blocks which is sampled uniformly over the past (and thus \(\xi\)), called a good path, with high probability.
	
	Blocks and paths close to the boundary of the system are handled only using finite energy (but the boundary being one-dimensional, this is sufficient).
	
	Once a ``circuit of good blocks joined by good paths'' surrounding \(\Delta_2\) is explored, it decouples its inside from its outside and the sampling of its exterior is then uniform over \(\xi\). Figure~\ref{Fig:mechanism_prf} gives a graphical idea of how the exploration is done.
	
	\begin{figure}[h]
		\centering
		\includegraphics[scale=0.8]{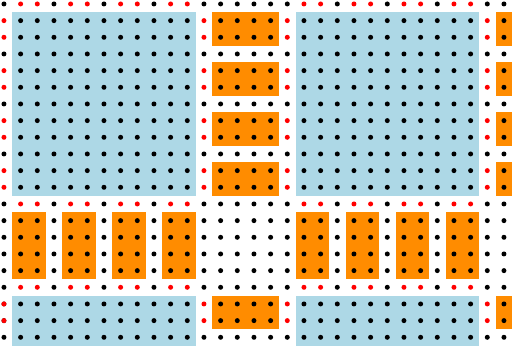}
		\caption{Blue blocks are explored first, then orange and finally red sites. Weak mixing is used to show that sampling blue and orange blocks away from the system boundary is done uniformly over the past with high probability. The entropy of the red-orange junctions between blue blocks compensates for the use of finite energy on the red sites.}
		\label{Fig:mechanism_prf}
	\end{figure}
	
	Of course, the setup of the present paper being a bit less nice than Markovian models, everything is a bit more technical to write down, and the final exploration turns out to be rather difficult to digest. Apologies to the reader for this, I did not find a better way of writing the proof (which most certainly exists, given the simplicity of the underlying mechanism).
	
	\subsection{Patch exploration}
	\label{sec:prf_of_main:patch_expl}
	
	\begin{definition}
		\label{def:patchExploration_sampling}
		Let \((\Omega,\calF,P)\) be a probability space. Let \(I\) be a finite set, and \((\calX_x)_{x\in I}\) a collection of finite sets equipped with the discrete \(\sigma\)-algebra: \(\calF_x=\calP(\calX_x)\). Set \(\calX = \bigtimes_{x\in I}\calX_x\), \(\calF= \bigotimes_{x\in I}\calF_x\), and let \(X\) be a \(\calX\)-valued random variable. Denote \(X_{J}\) the restriction (projection) of \(X\) to \(\bigtimes_{x\in J}\calX_x\). Suppose that for every \(J\subset I\), and for \(P\)-almost every realization of \(X_{J}\), \(\xi\), the conditional measure \(P(\cdot \given X_{J}=\xi)\) is well defined. A \emph{patch exploration sampling} for \(X\) is a the data of
		\begin{enumerate}
			\item an i.i.d. sequence \(U_0,U_1,U_2,\dots\) of uniform random variables on \([0,1)\);
			\item a collection of measurable functions: \(\calA_0 = \varnothing\), and for \(k\geq 1\), \(\calA_{k}: [0,1)^{k}\to \{ J\subset I\}\) such that
			\begin{itemize}
				\item \(\calA_{k}(u_0,\dots, u_{k-1})\subset \calA_{k+1}(u_0,\dots, u_{k})\) for all \(k\)'s, \(u_0,\dots,u_k\in [0,1)\),
				\item there is \(T_*\in \N\) such that for any \(u_0,u_1,\dots\)
				\begin{equation*}
					\calA_{k}(u_0,\dots, u_{k-1}) = I \ \forall k\geq T_*;
				\end{equation*}
			\end{itemize}
			\item a collection of measurable functions: for \(\Lambda\subset I\), \(J\subset I\setminus \Lambda\), and \(\xi\in \calX_{\Lambda}\) such that \(P(\cdot \given X_{\Lambda}=\xi)\) is well defined,
			\begin{equation*}
				F_{\Lambda;J}^{\xi} : [0,1) \to \calX_J,
			\end{equation*}is such that the law of \(F_{\Lambda;J}^{\xi}(U_0)\) is \(P\big( X_J\in \cdot \given X_{\Lambda}=\xi\big)\).
		\end{enumerate}
	\end{definition}
	
	We then have the next simple fact.
	\begin{lemma}
		\label{lem:patchExploration_sampling}
		With the notations and hypotheses of definition~\ref{def:patchExploration_sampling}, let
		\begin{equation*}
			\calA_{0}= \varnothing,\qquad \calA_{k} = \calA_{k}(U_0,\dots, U_{k-1}),\quad \calV_{k} = \calA_{k}\setminus \calA_{k-1},
		\end{equation*}and define inductively \(Y\in \calX\) by
		\begin{equation*}
			Y_{\calV_k} = Y_{\calV_k}(U_k) = F_{\calA_{k-1}; \calV_k}^{Y_{\calA_{k-1}}}(U_k).
		\end{equation*}Then,
		\begin{enumerate}
			\item \(Y\) is well defined as \(\calA_{k}\) stabilises, and \(Y \lawEq X\),
			\item for every \(\bbN\)-valued random variable \(T\) with \(\{T=n\}\in \sigma(U_0,\dots, U_{n-1})\) for \(n\geq 1\), and \(T\leq T_*\) a.s.,
			\begin{equation*}
				P\big(Y_{I\setminus \calA_{T}} \in \cdot \given U_0,\dots, U_{T-1}\big) \asEq P\big(X_{I\setminus \calA_T} \in \cdot \given X_{\calA_T} = Y_{\calA_T}\big).
			\end{equation*}
		\end{enumerate}
	\end{lemma}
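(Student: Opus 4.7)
The plan is to prove both assertions by induction on the exploration step $k$, exploiting the fact that each new patch $\calV_k$ and the corresponding sampling rule $F_{\calA_{k-1};\calV_k}^{\,\cdot}$ are determined by the past $(U_0,\dots,U_{k-1})$, while $U_k$ is independent of that past. Well-definedness of $Y$ is immediate from the stabilisation hypothesis $\calA_k(u_0,\dots,u_{k-1}) = I$ for all $k \geq T_*$: the inductive construction terminates after at most $T_*$ steps and yields a measurable $\calX$-valued random variable.

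For part (1), I would prove by induction on $k \geq 0$ that, for $P$-a.e.\ $(u_0,\dots,u_{k-1})$, the conditional law of $Y_{\calA_k}$ given $\{U_0=u_0,\dots,U_{k-1}=u_{k-1}\}$ equals the unconditional law of $X_{\calA_k(u_0,\dots,u_{k-1})}$. The base case $k=0$ is vacuous since $\calA_0 = \varnothing$. For the step: under the conditioning, $\calA_{k-1}$ and $\calV_k$ become deterministic sets, $U_k$ is independent of the past, and the inductive hypothesis gives that $Y_{\calA_{k-1}}$ has the law of $X_{\calA_{k-1}}$. The defining property of $F_{\calA_{k-1};\calV_k}^{\eta}$ then implies that, given $Y_{\calA_{k-1}}=\eta$, the variable $Y_{\calV_k} = F_{\calA_{k-1};\calV_k}^{\eta}(U_k)$ has the conditional law $P(X_{\calV_k}\in\cdot \given X_{\calA_{k-1}}=\eta)$. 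Combining these two facts yields the joint distribution $P(X_{\calA_k}\in\cdot)$ for $Y_{\calA_k}$. Evaluating at $k=T_*$ gives $Y\lawEq X$.

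For part (2), I would reduce to deterministic $T=n$ by decomposing along the events $\{T=n\}\in \sigma(U_0,\dots,U_{n-1})$; this is legitimate because $T\leq T_*$ almost surely, so only finitely many values of $n$ contribute. On such an event $\calA_T$ and $Y_{\calA_T}$ are $\sigma(U_0,\dots,U_{n-1})$-measurable, while the tail $U_n,U_{n+1},\dots$ remains an i.i.d.\ uniform sequence independent of that $\sigma$-algebra. Hence the continuation from step $n$ is itself a patch exploration sampling, but with target distribution the conditional measure $P(X\in\cdot \given X_{\calA_n}=Y_{\calA_n})$ and with the exploration rules shifted by $n$. Applying the same induction as in part~(1) to this continuation yields the conditional law $P(X_{I\setminus \calA_n}\in\cdot\given X_{\calA_n}=Y_{\calA_n})$ for $Y_{I\setminus \calA_n}$ given $\sigma(U_0,\dots,U_{n-1})$, which is the stated formula.

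The argument is essentially formal; the only point requiring care is the existence of regular conditional distributions when conditioning on $X_{\calA_k} = Y_{\calA_k}$, which is handled by the discreteness of each $\calX_x$ and the running assumption that $P(\cdot\given X_\Lambda=\xi)$ is well defined for $P$-a.e.\ $\xi$. I do not anticipate any substantive obstacle beyond carefully setting up the induction and keeping track of which sets are deterministic under each layer of conditioning.
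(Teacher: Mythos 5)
Your well-definedness remark and the reduction of part (2) to deterministic times are fine, but the inductive statement you propose for part (1) is false, and this is a genuine gap rather than a presentational one. First, there is a conditioning mismatch: given \(\{U_0=u_0,\dots,U_{k-1}=u_{k-1}\}\), the restriction \(Y_{\calA_{k-1}}\) is a deterministic function of \((u_0,\dots,u_{k-1})\) (only \(Y_{\calV_k}\), which uses \(U_k\), remains random), so the conditional law of \(Y_{\calA_k}\) is a point mass on the coordinates in \(\calA_{k-1}\) and cannot equal the unconditional law of \(X_{\calA_k(u_0,\dots,u_{k-1})}\) unless \(X_{\calA_{k-1}}\) is a.s.\ constant. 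Second, and more fundamentally, no unconditional repair of the form ``\((\calA_k,Y_{\calA_k})\) has the law of \(X\) restricted to an independently chosen random set'' is available either, because the exploration is adaptive: \(\calA_k\) depends on \(U_0,\dots,U_{k-1}\), hence on the values already sampled. Take \(I=\{1,2,3\}\), \(\calV_1=\{1\}\), and let \(\calV_2=\{2\}\) if \(Y_1=0\) and \(\calV_2=\{3\}\) if \(Y_1=1\); then conditionally on \(\{\calA_2=\{1,2\}\}\) the law of \(Y_{\{1,2\}}\) is \(P(X_{\{1,2\}}\in\cdot\given X_1=0)\), not \(P(X_{\{1,2\}}\in\cdot)\). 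Your step ``combining these two facts yields the joint distribution \(P(X_{\calA_k}\in\cdot)\) for \(Y_{\calA_k}\)'' is exactly where this breaks: pulling the indicator of which patch was explored out of the expectation would require independence between the exploration sets and the sampled values, which fails. (This adaptivity is essential in the application: Algorithm~\ref{alg:patch_exploration} chooses what to explore next as a function of whether earlier blocks were good.)

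The correct inductive object must carry the \emph{unexplored} part along. The paper proves, by \emph{reverse} induction on \(n\) (trivial base case \(n\geq T_*\), where \(I\setminus\calA_n=\varnothing\)), that \(P(Y_{I\setminus\calA_n}\in\cdot\given U_0,\dots,U_n)\asEq P(X_{I\setminus\calA_n}\in\cdot\given X_{\calA_n}=Y_{\calA_n})\), and part (1) is the case \(n=0\). If you insist on a forward induction, the statement that works is
\begin{equation*}
	\rmE\Big[\mathds{1}_{Y_{\calA_k}=x_{\calA_k}}\,P\big(X_{I\setminus\calA_k}=x_{I\setminus\calA_k}\bgiven X_{\calA_k}=x_{\calA_k}\big)\Big]=P(X=x)\qquad\text{for all }x\in\calX,
\end{equation*}
i.e.\ the law of \(Y_{\calA_k}\) \emph{completed by an independent conditional sample of the unexplored coordinates} equals the law of \(X\); this is trivial at \(k=0\), propagates by conditioning on \(\sigma(U_0,\dots,U_k)\) together with the chain rule for conditional probabilities, and yields part (1) at \(k=T_*\). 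Your part (2) is structurally reasonable (the continuation from a stopping time is again a patch exploration for the conditional measure, by the tower property), but as written it invokes the flawed induction of part (1), so it needs the same repair.
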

	\begin{proof}
		Start by a version of the second point at deterministic times.
		\begin{claim}
			For any \(n \geq 0\),
			\begin{equation*}
				P\big(Y_{I\setminus \calA_{n}} \in \cdot \given U_0,\dots, U_n\big) \asEq P\big(X_{I\setminus \calA_n} \in \cdot \given X_{\calA_n} = Y_{\calA_n}\big).
			\end{equation*}
		\end{claim}
		\begin{proof}
			Denote \(\calA_k^c \equiv I\setminus \calA_k\). Proceed by reverse induction: as \(I\setminus \calA_{n} = \varnothing\) for \(n\geq T_*\), the result is direct for those \(n\). If the result holds true for \(n+1\), one has that for any \(x\in \calX\),
			\begin{align*}
				&P\big(Y_{\calA_{n}^c} = x_{\calA_{n}^c} \given U_0,\dots, U_{n-1}\big)
				\\
				&\quad=
				\sum_{V\subset I} E_{n}\big( \mathds{1}_{\calV_{n+1}= V} P\big(Y_{\calA_{n+1}^c}= x_{\calA_{n+1}^c}, Y_{V}=x_V \given U_0,\dots, U_n\big)\big)
				\\
				&\quad=
				\sum_{V\subset I} E_{n}\big( \mathds{1}_{\calV_{n+1}= V} \mathds{1}_{F_{\calA_n;V}^{Y_{\calA_{n}}}(U_{n+1}) = x_{V} } P\big(Y_{\calA_{n+1}^c}= x_{\calA_{n+1}^c} \given U_0,\dots, U_n\big)\big)
				\\
				&\quad=
				\sum_{V\subset I} P(X_{V}=x_V\given X_{\calA_{n}} = Y_{\calA_n} )E_{n}\big( \mathds{1}_{\calV_{n+1}= V} P\big(Y_{\calA_{n+1}^c}= x_{\calA_{n+1}^c} \given U_0,\dots, U_n\big)\big)
				\\
				&\quad=
				\sum_{V\subset I} P(X_{V}=x_V\given X_{\calA_{n}} = Y_{\calA_n} )E_{n}\big( \mathds{1}_{\calV_{n+1}= V} P\big(X_{\calA_{n+1}^c} = x_{\calA_{n+1}^c} \given X_{\calA_{n+1}} = Y_{\calA_{n+1}} \big)\big)
				\\
				&\quad=
				\sum_{V\subset I} E_{n}\big( \mathds{1}_{\calV_{n+1}= V} P\big(X_{\calA_{n}^c} = x_{\calA_{n}^c} \given X_{\calA_{n}} = Y_{\calA_{n}} \big)\big) = P\big(X_{\calA_{n}^c} = x_{\calA_{n}^c} \given X_{\calA_{n}} = Y_{\calA_{n}} \big),
			\end{align*}where \(E_{n}\) denotes expectation with respect to \(U_{n},U_{n+1},\dots\), and we used the independence between the \(U_k\)'s and the properties of the \(f\)'s in the third equality, and the fact that the result for \(n+1\) was supposed true in the fourth. This implies that if the result holds for \(n+1\) it holds for \(n\), which concludes the proof of the Claim.
		\end{proof}
		Applying the Claim for \(n=0\) gives the first point of the Lemma. The second point is a form of strong Markov property and follows directly from the Claim and the fact that \(\{T=n\}\) is measurable with respect to \(U_0,\dots, U_{n-1}\).
	\end{proof}
	
	\subsection{Coarse-Graining and rescaling}
	\label{sec:prf_main:CG}
	
	Let \(a,r,L\) be positive integers. Let
	\begin{gather*}
		L_1 = r^L 3L,\quad K = L_1 + (a+1)L,\quad K'=(2K+1),\\
		\rmB = \Lambda_{L_1},\quad \rmB' = \Lambda_{aL}(\rmB).
	\end{gather*}
	Let \(l\geq 0\) be the parameter linked with the ``Markov-type hypotheses''. It will be more convenient to work with the blocked field \(X = \psi(\sigma)\). For the rest of the section, rescale everything by \(2l+1\), and identify \(\bbL_l\) with \(\Z^2\), and \(\Z^2\) with \((\delta_l \Z)^2\), where \(\delta_l = \tfrac{1}{2l+1}\). In particular, \(\sigma\) lives on \((\delta_l\Z)^2\) and \(X\) lives on \(\Z^2\). for \(\Lambda\subset (\delta_l\Z)^2\), introduce
	\begin{equation}
		\label{eq:def:micro_to_lattice}
		\calS(\Lambda) = \{x\in \Z^2:\ (x+[-l,l])\cap \Lambda\neq \varnothing\}.
	\end{equation}Also define the ``reverse'' mapping:
	\begin{equation}
		\label{eq:def:lattice_to_micro}
		\calR(\Lambda) = (\delta_{l}\Z)^2\cap \bigcup_{u\in \Lambda}(u+[-\tfrac{1}{2}, \tfrac{1}{2}]).
	\end{equation}
	
	\subsubsection*{Coarse-Graining of infinite volume}
	
	Introduce then
	\begin{equation*}
		\Gamma = \bbL_K = (K'\Z)^2,\quad E_{\Gamma} = \big\{\{i,j\}\subset \Gamma:\ \norm{i-j}_2 = K' \big\}.
	\end{equation*}Note that if \(\{i,j\}\in E_{\Gamma}\), then \((i-j)/K'\in \{\pm\rme_1,\pm\rme_2\}\). For \(\{i,j\}\in E_{\Gamma}\), define
	\begin{equation*}
		\rme_{ij} = \frac{j-i}{K'},\quad \rme_{ij}^{\perp}\in \{\rme_1,\rme_2\} \text{ is such that } \rme_{ij}^{\perp}\cdot \rme_{ij} = 0,
	\end{equation*}and let \(\Theta_{ij}\) be the linear transformation such that
	\begin{equation*}
		\Theta_{ij} \rme_1 = \rme_{ij}, \quad \Theta_{ij} \rme_2 = \rme_{ij}^{\perp}.
	\end{equation*} Note that they are related via: \(\Theta_{ij}\) is the composition of \(\Theta_{ji}\) with the mirror symmetry sending \(\rme_{ij}\) to \(-\rme_{ij} = \rme_{ji}\). With these notations, and for \(k\in \{\pm 1,\dots, \pm r^L\}\) let (see Figure~\ref{Fig:coarse_graining1} and Figure~\ref{Fig:coarse_graining2})
	\begin{gather*}
		E_{\refBloc}^k = \begin{cases}
			\llbracket L_1+L+1, L_1 + (2a+1)L \rrbracket \times \llbracket (3k-2)L+1, (3k-1)L\rrbracket & \text{ if } k>0\\
			\llbracket L_1+L+1, L_1 + (2a+1)L \rrbracket \times \llbracket (3k+1)L, (3k+2)L-1\rrbracket & \text{ if } k<0
		\end{cases},
		\\
		F_{\refBloc}^k = \begin{cases}
			\llbracket L_1+1, L_1 + (2a+2)L \rrbracket \times \llbracket (3k-2)L+1, (3k-1)L\rrbracket & \text{ if } k>0\\
			\llbracket L_1+1, L_1 + (2a+2)L \rrbracket \times \llbracket (3k+1)L, (3k+2)L-1\rrbracket & \text{ if } k<0
		\end{cases},
		\\
		H_{\refBloc} = \llbracket L_1 +1 , L_1 + (2a+2)L\rrbracket \times \llbracket -L_1,L_1\rrbracket,
	\end{gather*}be the reference ``edge-blocks'', and
	\begin{gather*}
		E_{ij}^k = i + \Theta_{ij} E_{\refBloc}^k = j + \Theta_{ji} E_{\refBloc}^k = E_{ji}^k,\\
		F_{ij}^k = i + \Theta_{ij} F_{\refBloc}^k = j + \Theta_{ji} F_{\refBloc}^k = F_{ji}^k,\\
		H_{ij} = i + \Theta_{ij} H_{\refBloc} = j + \Theta_{ji} H_{\refBloc} = H_{ji},
	\end{gather*}be the ``edge-associated-blocks'' of the coarse-graining. Note that all those sets depend only on the edge \(\{i,j\}\) and not on the particular orientation of this edge. The notation \(E_{e}^k, F_{e}^k, H_{e}\) with \(e\in E_{\Gamma}\) is thus unambiguously defined. The sets \(\rmB(i)\)'s will be referred to as ``site-blocks'', the sets \(H_e\)'s as ``edge-blocks'', and the sets \(E_{e}^k\) as ``micro-edge-blocks''. An important point of these definitions is that for \(k\neq k'\),
	\begin{equation*}
		\Lambda_{L}(E_{e}^k)\cap \Lambda_{L}(E_{e}^{k'}) = \varnothing,\qquad \Lambda_{L}(E_{e}^k)\cap \rmB(i) = \varnothing.
	\end{equation*}where \(e=\{i,j\}\). I.e.: the different regions of the coarse-graining are well separated relatively to their sizes.
	
	\begin{figure}[h]
		\centering
		\includegraphics[scale=0.5]{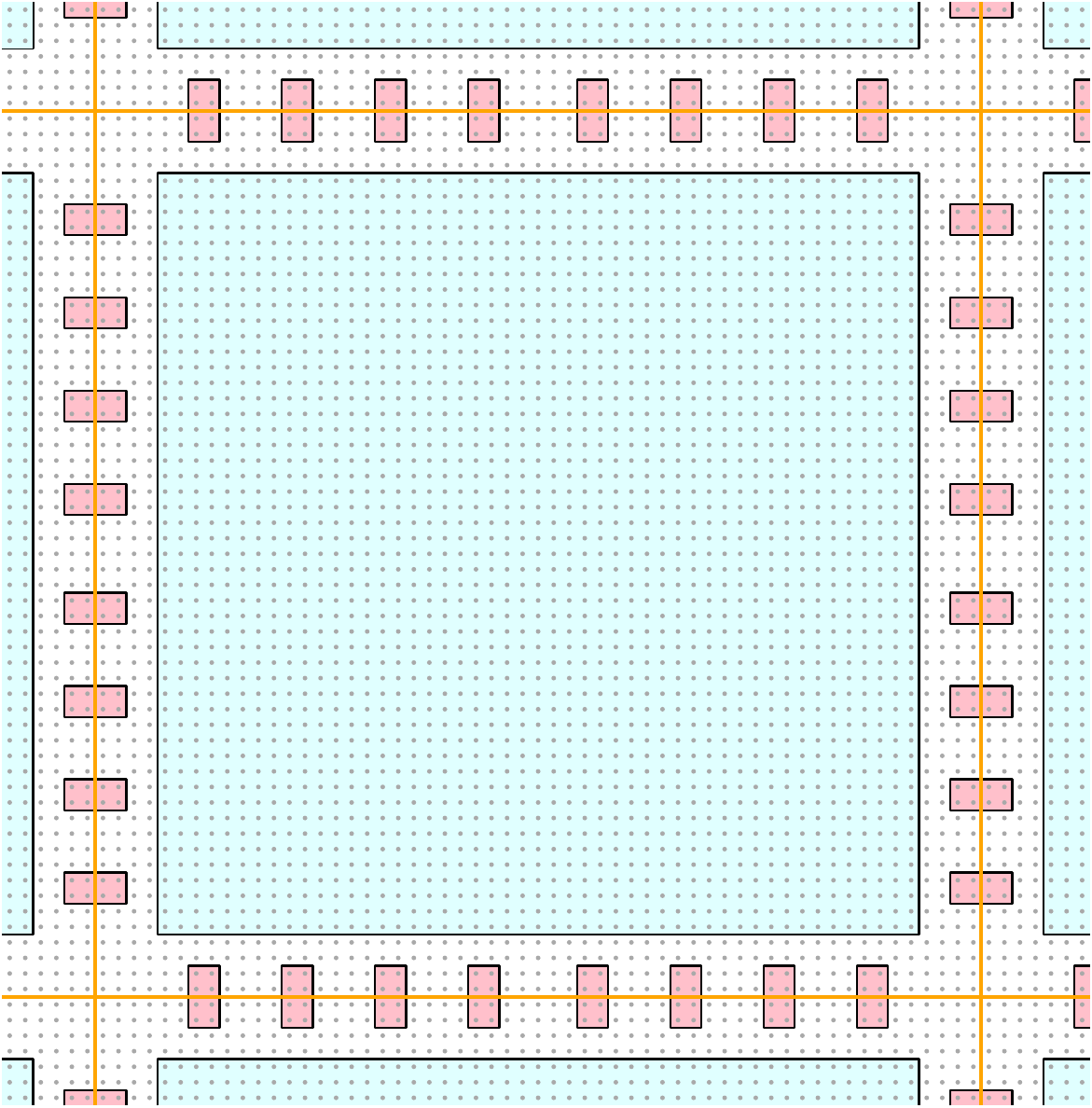}
		\caption{Illustration of the partitioning with \(r=L=2\), \(a=1\). The ``elementary cell'' of the partitioning is depicted in orange. The sets \(E_{ij}^k\) correspond to the rectangular pink boxes, the sets \(F_{ij}^k\) to rectangular boxes plus the sites between the rectangular box and the neighbouring square boxes, and the sets \(H_{ij}\) are the sets of sites between two square boxes. Grey dots represent sites where \(X=\psi(\sigma)\) lives.}
		\label{Fig:coarse_graining1}
	\end{figure}
	
	Finally, for \(i,j\in \Gamma\) with \(e=\{i,j\}\in E_{\Gamma}\), and \(k\in \{\pm 1, \dots, \pm r^L\}\), define (see Figure~\ref{Fig:coarse_graining2}):
	\begin{equation}
		\label{eq:def:sides_micro_edges}
		\begin{gathered}
			\varrho (e,i,k) = \big\{x\in E_{e}^k:\ \rmd_{\infty}(x,\rmB(i)) = L+1\big\},
			\\
			\varrho(i,j,k) = \big\{x\in \rmB(i):\ \rmd_{\infty}(x,E_{ij}^k) = L+1\big\}.
		\end{gathered}
	\end{equation}For any \(i,j\in \Gamma\), \(k\in \{\pm 1, \dots, \pm r^L\}\), and any \(x\in \varrho (e,i,k)\), \(y\in \varrho(i,j,k)\), let \(\gamma_{e,k,x,y} \equiv \gamma_{e,k,y,x}\) be a fixed self-avoiding path of length less or equal to \(2L\) connecting \(x\) to \(y\) with \(\gamma_{e,k,x,y}\) included in \(F_e^k\setminus E_{e}^k\) (except for its endpoints).
	
	\begin{figure}[ht!]
		\centering
		\includegraphics[scale=0.8]{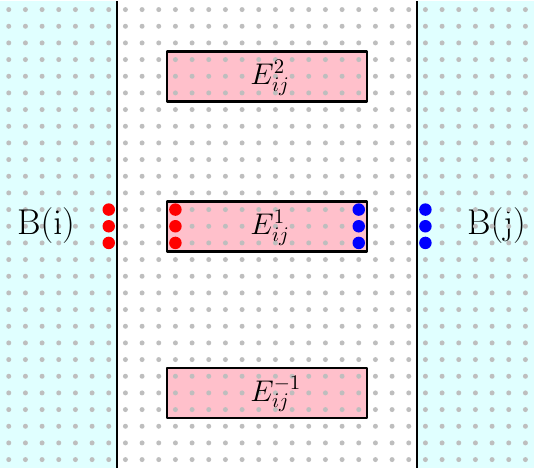}
		\caption{The set \(\varrho(\{i,j\},i,1)\) is given by the red sites in \(E_{ij}^1\) while the set \(\varrho(\{i,j\}, j,1)\) is given by the blue sites in \(E_{ij}^1\). The sets \(\varrho(i,j,1)\) and \(\varrho(j,i,1)\) are respectively given by the red sites in \(\rmB(i)\) and the blue sites in \(\rmB(j)\). Here, \(L=3, a=2\). Grey dots represent sites where \(X=\psi(\sigma)\) lives.}
		\label{Fig:coarse_graining2}
	\end{figure}
	
	\subsubsection*{Coarse-Graining of finite volumes}
	
	The goal is to study measures in finite volume with boundary conditions. So, introduce a coarse-grained version of finite volumes (see Figure~\ref{Fig:Lambda_CG}): for \(\Lambda\Subset (\delta_l\Z)^2\), define
	\begin{gather*}
		[\Lambda] = \{i\in \Gamma:\ \calR(\Lambda_{K}(i))\cap \Lambda\neq \varnothing\},
		\quad
		[\Lambda]_{\bulk} = \{i\in [\Lambda]:\ \calR(\Lambda_{K+2}(i))\subset \Lambda\},
		\\
		[\Lambda]_{\bnd} = [\Lambda]\setminus [\Lambda]_{\bulk},
		\quad
		[\Lambda]_{\ext} = \Gamma\setminus [\Lambda].
	\end{gather*}
	Note that
	\begin{equation*}
		\Gamma = [\Lambda]_{\bnd} \sqcup [\Lambda]_{\bulk} \sqcup [\Lambda]_{\ext}.
	\end{equation*}

	\begin{figure}[ht!]
		\centering
		\includegraphics[scale=0.7]{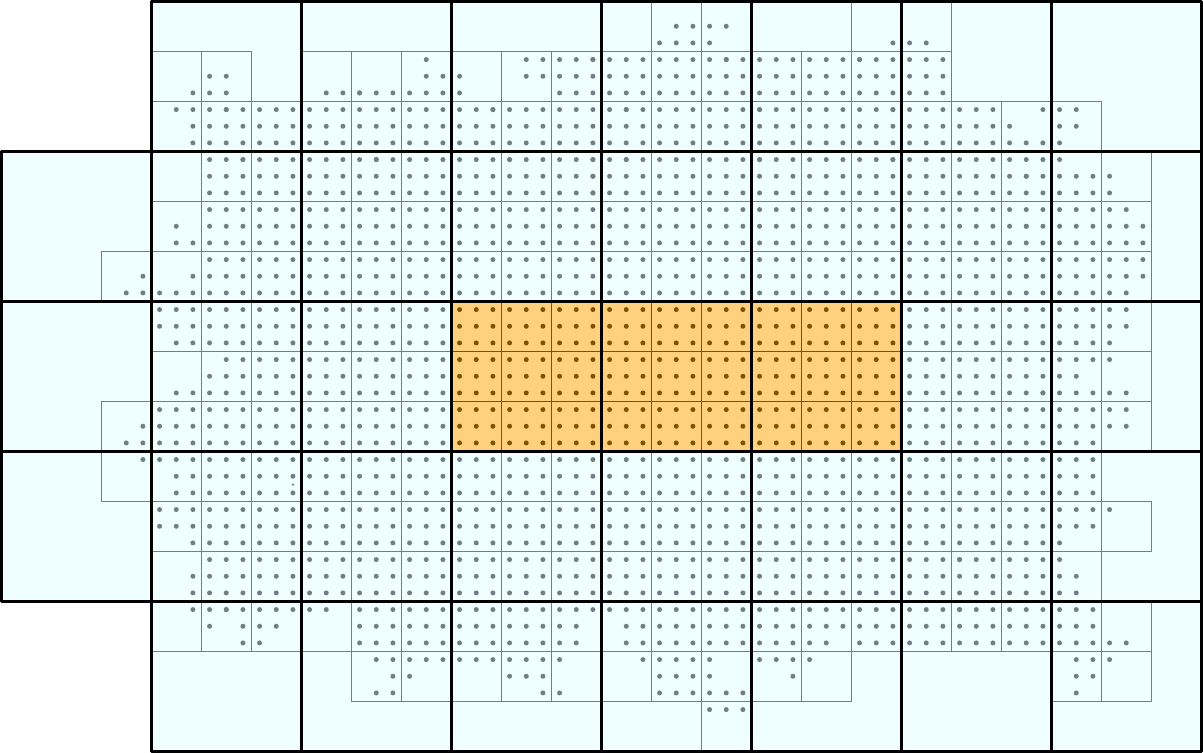}
		\caption{Coarse-graining of a finite volume \(\Lambda\): the small boxes are ``blocked spins'' (\(\sigma\) lives on the disks, \(X=\psi(\sigma)\) lives on the small boxes), the orange large boxes are \([\Lambda]_{\bulk}\), and the blue ones are \([\Lambda]_{\bnd}\).}
		\label{Fig:Lambda_CG}
	\end{figure}
	
	\subsection{Good configurations and canonical paths}
	
	Let \(\Lambda\Subset \Z^2\). Let \(\omega\in \Omega_{\calR(\Lambda)}\). A point \(u\in \partial^{\rmi}\Lambda\) is called a \emph{Markov candidate} (for \(\Lambda\)) if there is \(\zeta\in \Omega_{\calR(\Lambda^c)}\) such that
	\begin{equation*}
		(\omega_{\calR(\Lambda)}\zeta_{\calR(\Lambda^c)})_{\calR(\Lambda_1(x))}\in \MarkovSet_x.
	\end{equation*}Denote \(\mathrm{MC}_{\Lambda}(\omega)\) the set of Markov candidates for \(\Lambda\) in \(\omega\) (\(\mathrm{MC}_{\Lambda}(\omega)\subset \partial^{\rmi}\Lambda\)). Also, for \(\Lambda\Subset \Z^2\) such that \(\Lambda\setminus \partial^{\rmi}\Lambda\) is connected, define
	\begin{equation*}
		\calM_{\Lambda}(\omega) = \{x\in \Lambda\setminus \partial^{\rmi}\Lambda:\ \omega_{\calR(\Lambda_1(x))}\in \MarkovSet_x\},
	\end{equation*}and let \(\calC_{\Lambda}(\omega)\) be the connected component of \(\calM_{\Lambda}(\omega)\) with largest diameter (for the nearest-neighbour connectivity) when it is well defined, and \(\calC_{\Lambda}(\omega)= \varnothing\) else. Finally, set
	\begin{equation*}
		\tilde{\calC}_{\Lambda}(\omega) := \begin{cases}
			\big\{x\in \Lambda:\ x\in \calC_{\Lambda}(\omega) \text{ or } x\in \mathrm{MC}_{\Lambda}(\omega), \rmd_2(x,\calC_{\Lambda}(\omega)) = 1\big\} & \text{ if } \calC_{\Lambda}(\omega)\neq \varnothing\\
			\varnothing & \text{ else}
		\end{cases}.
	\end{equation*}
	
	\subsubsection*{Good site-block configurations}
	
	Say that \(\omega\in \Omega_{\calR(\rmB(i))}\) is a \emph{good configuration} if all of the following hold
	\begin{itemize}
		\item \(\tilde{\calC}_{\rmB(i)}(\omega) \neq \varnothing\),
		\item for every \(j\) such that \(\{i,j\}\in E_{\Gamma}\), the number of \(k\in \{ 1,\dots, 3r^L \}\) such that \(\tilde{\calC}\cap \varrho(i,j,k) \neq \varnothing\) is at least \(0.75\cdot 3r^L\),
		\item for every \(j\) such that \(\{i,j\}\in E_{\Gamma}\), the number of \(k\in \{1,\dots, 3r^{L} \}\) such that \(\tilde{\calC}\cap \varrho(i,j,-k)\neq \varnothing\) is at least \(0.75\cdot 3r^{L}\).
	\end{itemize}Denote \(\good_i\subset \Omega_{\calR(\rmB(i))}\) the set of good configurations for the block \(\rmB(i)\).

	\subsubsection*{Good micro-edge-block configurations and canonical paths}
	
	For \(e=\{i,j\}\in E_{\Gamma}\), \(k\in \{\pm 1,\dots, \pm 2^{L_1}\}\), say that \(\omega\in \Omega_{\calR(E_{e}^k)}\) is a \emph{good configuration} if all of the following hold
	\begin{equation*}
		\tilde{\calC}_{E_{e}^k}(\omega)\neq \varnothing,\quad \tilde{\calC}_{E_{e}^k}(\omega) \cap \varrho(e,i,k)\neq \varnothing,\quad \tilde{\calC}_{E_{e}^k}(\omega) \cap \varrho(e,j,k)\neq \varnothing.
	\end{equation*}Denote \(\good_{e}^k\subset \Omega_{\calR(E_{e}^k)}\) the set of good configurations for the block \(E_{e}^k\).
	
	For \(e=\{i,j\}, k\in \{\pm 1,\dots,\pm r^L\}\), \(\omega\in \good_i\), \(\eta\in \good_{e}^k\), if
	\begin{equation*}
		\tilde{\calC}_{\rmB(i)}(\omega) \cap \varrho(i,j,k)\neq \varnothing,
	\end{equation*}let \(\gamma_{i,e}(\omega,\eta)\subset F_{e}^{k}\setminus E_{e}^k\) be a fixed self avoiding path of length at most \(2L\) connecting \(\tilde{\calC}_{\rmB(i)}(\omega)\) to \(\tilde{\calC}_{E_{e}^k}(\eta)\). These paths will be called \emph{canonical paths} and the sets \(\Lambda_1(\gamma_{i,e}(\omega,\eta))\setminus (\rmB(i)\cup E_{e}^k)\) \emph{canonical sets}

	\subsubsection*{Estimates on good configurations probabilities}
	
	The relevant results concerning good configurations are the following.
	
	\begin{claim}
		\label{claim:good_block_proba}
		Suppose that~\ref{hyp:Markov:Decoupling_circuits}, and~\ref{hyp:Markov:local_Markov_bulk} with \(\pbulk\) close enough to \(1\) hold. Then, there is \(L_0 \geq 1\), such that for any \(L\geq L_0\), and any \(i\in \Gamma\),
		\begin{equation*}
			\nu(X_{\rmB(i)} \in \good_i) \geq 1 - e^{-L_1}
		\end{equation*}
	\end{claim}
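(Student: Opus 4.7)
The approach is a standard supercritical site percolation argument for the ``Markov'' indicator field, reduced to i.i.d. Bernoulli via Liggett-Schonmann-Stacey, combined with a concentration step to exploit the large number of slots.

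\emph{Step 1 (Reduction to Bernoulli).} By~\ref{hyp:Markov:local_Markov_bulk} each indicator $\mathds{1}_{X_{\Lambda_1(u)}\in \MarkovSet_u}$ has $\nu$-marginal at least $\pbulk$, and since $\MarkovSet_u$ only depends on $X$ in a bounded neighborhood of $u$, this family is finite-range dependent. The Liggett-Schonmann-Stacey theorem therefore provides a stochastic domination of $\calM_{\rmB(i)}$ from below by an i.i.d. Bernoulli site percolation $\pi$ at density $p^{\star}=p^{\star}(\pbulk)$ with $p^{\star}\to 1$ as $\pbulk\to 1$. The same argument applied to the (weaker) event $\{y\in \mathrm{MC}_{\rmB(i)}\}\supseteq \{X_{\Lambda_1(y)}\in \MarkovSet_y\}$ yields an analogous domination for Markov candidates on $\partial^{\rmi}\rmB(i)$. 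I take $\pbulk$ large enough that $1-p^{\star}$ is as small as desired.

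\emph{Step 2 (Supercritical structure).} At density $p^{\star}$ close to $1$, a standard Peierls-type estimate on clusters of ``bad'' sites in i.i.d. Bernoulli percolation shows that, with probability at least $1-e^{-2L_1}$, both (i) $\pi$ restricted to $\rmB(i)$ has a unique connected component $\calC_{\pi}$ of diameter at least $L_1/2$, and (ii) every axis-aligned sub-box of $\rmB(i)$ of side $L$ meets $\calC_{\pi}$. (Non-uniqueness would require a bad circuit around a macroscopic cluster, and missing an $L\times L$ box would require a bad crossing of it; both have probability $\leq e^{-cL_1}$ with $c=c(\pbulk)$ arbitrarily large.) On this event $\calC_{\rmB(i)}\supseteq \calC_{\pi}$, so in particular $\tilde{\calC}_{\rmB(i)}\neq\varnothing$.

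\emph{Step 3 (Slot estimate and concentration).} For a fixed slot $\varrho(i,j,k)$, Step 2 guarantees (via (ii) applied to an $L\times L$ box containing the interior-adjacent strip $\varrho'$) that $\varrho'\cap \calC_{\rmB(i)}\neq \varnothing$. It remains to find some $y\in \varrho(i,j,k)\cap \mathrm{MC}_{\rmB(i)}$ adjacent to $\varrho'\cap \calC_{\rmB(i)}$; by the Markov-candidate LSS domination of Step 1, this fails with probability $\leq e^{-cL}$, with $c$ arbitrarily large. The geometric separation between micro-edge-blocks enforced by Section~\ref{sec:prf_main:CG} (the $a$-neighbourhoods $\Lambda_L(E_{e}^k)$ are pairwise disjoint) ensures that different slot-bad events depend on $X$ in disjoint regions of size $O(L)$; modulo the global event of Step 2 they can therefore be dominated by independent Bernoulli random variables of parameter $e^{-cL}$. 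A Chernoff bound on $3r^L$ such variables then yields
\begin{equation*}
	\nu\big[\#\text{bad slots}\geq 0.25\cdot 3r^L\big] \leq \exp\!\big(3r^L(\log 2 - c'L)\big) \leq \tfrac{1}{2} e^{-L_1}
\end{equation*}
for $c'$ (and hence $\pbulk$) large enough. Combining with the Step 2 losses gives $\nu(X_{\rmB(i)}\notin \good_i)\leq e^{-L_1}$.

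\emph{Main obstacle.} The target bound $e^{-L_1}=e^{-3Lr^L}$ is doubly exponential in $L$, far stronger than the naive union bound $r^L e^{-cL}$ over slots. The delicate point is therefore the concentration in Step 3: one really needs (near-)independence of the $r^L$ slot-bad events, not just a union bound. Extracting this independence relies crucially on the fact that each slot-bad event, conditionally on the global supercritical structure of Step 2, has support in a size-$O(L)$ region around its slot, which in turn relies on the geometric separation ($a$-neighbourhoods around the micro-edge-blocks) carefully built into the coarse-graining of Section~\ref{sec:prf_main:CG}.
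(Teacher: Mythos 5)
Your overall strategy (LSS domination, supercritical geometry of the open cluster, then concentration over the \(3r^L\) slots instead of a union bound) is the same as the paper's, and Steps 1 and the uniqueness part of Step 2 are fine. But there is a genuine gap in the way you set up the concentration. First, the event in Step 2(ii) that \emph{every} \(L\times L\) sub-box of \(\rmB(i)\) meets the giant cluster does \emph{not} have probability \(1-e^{-2L_1}\): already the event that all sites of a single fixed \(L\times L\) box are closed has probability \((1-p^{\star})^{L^2}\), which for fixed \(p^{\star}\) is enormously larger than \(e^{-L_1}=e^{-3Lr^L}\) once \(L\) is large. So you cannot absorb this global event into the error term. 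Second, and more importantly, the localization claim in Step 3 is false as stated: the event \(\{\tilde{\calC}_{\rmB(i)}\cap\varrho(i,j,k)=\varnothing\}\) is not supported in an \(O(L)\)-region around the slot, because it depends on whether the \emph{giant} cluster reaches the slot. A single long closed \(*\)-path running along \(\partial^{\rmi}\rmB(i)\) shields many consecutive slots simultaneously, so the slot-bad events are positively correlated and are not dominated by independent Bernoulli variables just because the slots are \(2L\) apart. Conditioning on your Step 2 does not repair this, since Step 2 itself is unavailable at the required probability scale.

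The correct substitute — and what the paper does — is to make the cost \emph{additive} rather than the events independent: each bad slot forces the \(L\) boundary sites of \(\varrho(i,j,k)\) to be shielded from the giant cluster by closed \(*\)-clusters, so having \(0.25\, r^L\) bad slots forces \(|\tilde{\calC}\cap\partial^{\rmi}\rmB(i)|\le\frac{95}{96}|\partial^{\rmi}\rmB(i)|\), i.e.\ a positive fraction of the boundary is shielded, which requires the \emph{total} size of closed \(*\)-clusters touching \(\partial^{\rmi}\rmB(i)\) to be of order \(L_1\). This total has bounded exponential moments (Lemmas~\ref{app:perco:setCluster_to_siteCluster} and~\ref{app:perco:moments_clusterSize}, packaged as Lemma~\ref{lem:app:good_box}), giving the bound \(e^{-cL_1}\) with \(c\) large directly by Chebyshev, with no independence claim needed. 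The only global input required is the crossing event (both a left-to-right and a top-to-bottom open crossing of \(\mathring{\rmB}(i)\)), whose failure genuinely does cost \(e^{-cL_1}\). I recommend you replace Steps 2(ii)--3 by this counting argument; your numerology (Chernoff on \(3r^L\) trials) happens to land on the right order of magnitude, but the justification you give for it does not hold.
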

	\begin{proof}
		Let \(i\in \Gamma\), \(\Lambda = \rmB(i)\). Let \(X\sim \nu\circ \psi^{-1}\). Let \(Y_x = \mathds{1}_{ \MarkovSet_x}(X_{\Lambda_1(x)})\). As used in the proof of Theorem~\ref{thm:weakMix_to_ratioWeakMix}, the results of~\cite{Liggett+Schonmann+Stacey-1997} and~\ref{hyp:Markov:local_Markov_bulk} imply that \(Y\) can be coupled to a homogeneous Bernoulli site percolation of parameter \(p\) going to \(1\) as \(\pbulk\) goes to \(1\), denoted \(Y'\) (\(Y,Y'\) are defined on the same space, and \(Y'\leq Y\)). When defined, let \(\calC=\calC_{\Lambda}(X)\), and \(\calC'\) be the cluster of \(Y'\) restricted to \(\Lambda\setminus \partial^{\rmi}\Lambda\) having the largest diameter. Also, let \(\tilde{\calC} = \tilde{\calC}_{\Lambda}(X)\), and
		\begin{equation*}
			\tilde{\calC}' = \calC'\cup\{x\in \partial^{\rmi}\Lambda:\ Y_x' = 1,\ \rmd_2(x,\calC) = 1\}.
		\end{equation*}
		
		First note that (by planarity) when there is both a left-to-right and a top-to-bottom crossing of \(\Lambda\setminus \partial^{\rmi}\Lambda\) in \(Y'\) (event denoted \(\mathrm{Cross}\)), \(\calC,\calC'\) is well defined, and \(\calC'\subset \calC\), \(\tilde{\calC}'\subset \tilde{\calC}\). The event of having these crossing is then larger than 
		\begin{equation*}
			P(\mathrm{Cross}) \geq 1-2 (2L_1)^2e^{-2L_1} \geq 1-e^{-L_1}/3
		\end{equation*}for \(p\) large enough (independently of \(L\)), and \(L\) large enough.
		
		Work now under the assumption that the left-to-right and top-to-bottom crossing exist. The event \(\good_i^c\) is then that there is \(j\sim i\) and \(\circ\in \{-,+\}\) such that the number of \(k\)'s in \(\{\circ 1,\dots,\circ r^{L}\}\) with \(|\tilde{\calC} \cap \varrho(i,j,k)|\neq \varnothing\) is less than \(0.75\cdot r^{L}\). This second condition implies
		\begin{equation}
			\label{eq:prf:goodBlock:eq1}
			|\tilde{\calC} \cap \partial^{\rmi}\Lambda| \leq |\partial^{\rmi}\Lambda|- 0.25 Lr^{L},
		\end{equation}as \(|\varrho(i,j,k)| = L\). Now,
		\begin{equation*}
			|\partial^{\rmi}\Lambda| = 8L_1 = 24Lr^L,
		\end{equation*}so that~\eqref{eq:prf:goodBlock:eq1} is
		\begin{equation}
			\label{eq:prf:goodBlock:eq2}
			|\tilde{\calC} \cap \partial^{\rmi}\Lambda| \leq |\partial^{\rmi}\Lambda| - \frac{1}{96}|\partial^{\rmi}\Lambda| = \frac{95}{96}|\partial^{\rmi}\Lambda|.
		\end{equation}
		As \(\tilde{\calC}\supset \tilde{\calC}'\), the above event is included in \(|\tilde{\calC}' \cap \partial^{\rmi}\Lambda| \leq \frac{95}{96}|\partial^{\rmi}\Lambda|\). Now, for any \(n\geq 1\),
		\begin{equation*}
			P\big(|\tilde{\calC}'\cap \partial^{\rmi}\Lambda| = n\big) = \sum_{m\geq n} p^{n}(1-p)^{m-n}\binom{m}{n}P\big(|\calC'\cap \partial^{\rmi}\Lambda_{L_1-1}(i)| = m\big),
		\end{equation*}as there is exactly one site in \(\partial^{\rmi}\Lambda\) at distance \(1\) from \(\calC'\) per site in \(\calC'\cap \partial^{\rmi}\Lambda_{L_1-1}(i)\). Letting \(M= \lfloor\frac{191}{192}|\partial^{\rmi}\Lambda| \rfloor\), and \(\mathrm{Bin}\) denote a binomial random variable of parameters \(M,p\), one gets that
		\begin{equation*}
			P\Big(|\tilde{\calC}'\cap \partial^{\rmi}\Lambda| \leq \frac{95}{96}|\partial^{\rmi}\Lambda|\Big) \leq P\big(|\calC'\cap \partial^{\rmi}\Lambda_{L_1-1}(i)| \leq M\big) + P\Big(\mathrm{Bin}\leq \frac{95}{96}|\partial^{\rmi}\Lambda|\Big).
		\end{equation*}
		
		One can then apply Lemma~\ref{lem:app:good_box} to obtain that for \(p\) close enough to \(1\) (independently of \(L\)) and \(L\) large enough,
		\begin{equation*}
			P\big(|\calC'\cap \partial^{\rmi}\Lambda_{L_1-1}(i)| \leq M\big) \leq e^{-L_1}/24.
		\end{equation*}Moreover, by standard large deviation estimates for binomial random variables, for \(p\) close enough to \(1\) (independent of \(L\)),
		\begin{equation*}
			P\Big(\mathrm{Bin}\leq \frac{95}{96}|\partial^{\rmi}\Lambda|\Big) \leq e^{-L_1}/24.
		\end{equation*}This implies that (by a union bound over the choices for \(j\sim i\) and \(\circ\in \{-,+\}\))
		\begin{equation*}
			P(X\in \good_{i}^c, \mathrm{Cross})\leq 8\cdot 2 e^{-L_1}/24 = 2 e^{-L_1}/3.
		\end{equation*}Putting everything together gives
		\begin{equation*}
			P(X\in \good_{i}^c) \leq 1-P(\mathrm{Cross}) + P(X\in \good_{i}^c, \mathrm{Cross}) \leq e^{-L_1},
		\end{equation*}which is the claim.
	\end{proof}
	
	\begin{claim}
		\label{claim:good_edge_proba}
		Suppose that~\ref{hyp:Markov:Decoupling_circuits}, and~\ref{hyp:Markov:local_Markov_bulk} with \(\pbulk\) close enough to \(1\) hold. Then, for any \(L\geq 1\), and any \(e\in E_{\Gamma}\), \(k\in \{\pm 1,\dots \pm r^L\}\),
		\begin{equation*}
			\nu(X_{E_e^k} \in \good_e^k) \geq 1 - (aL)^2e^{-2L}.
		\end{equation*}
	\end{claim}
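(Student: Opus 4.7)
The approach closely mirrors the proof of Claim~\ref{claim:good_block_proba}: combine~\ref{hyp:Markov:local_Markov_bulk} with the domination theorem of~\cite{Liggett+Schonmann+Stacey-1997} to couple the indicator field $Y_x := \mathds{1}_{\MarkovSet_x}(X_{\Lambda_1(x)})$ (for $x$ in the interior of $E_e^k$) with an i.i.d.\ Bernoulli site percolation $Y' \leq Y$ whose parameter $p$ can be taken arbitrarily close to $1$ by choosing $\pbulk$ large enough. The event $\{X_{E_e^k}\in \good_e^k\}$ requires the maximal-diameter component $\calC_{E_e^k}(X)$ of $\calM_{E_e^k}$ to reach both short sides $\varrho(e,i,k)$ and $\varrho(e,j,k)$ of the elongated rectangle $E_e^k$ of dimensions $2aL\times L$. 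The task therefore reduces to showing that $Y'$ contains a horizontal crossing of $\mathring{E}_e^k$ connecting the interior columns adjacent to those two short sides, and that this crossing determines $\calC_{E_e^k}$.

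The crossing estimate is a standard dual Peierls argument. Absence of such a $Y'$-crossing of $\mathring{E}_e^k$ forces the existence of a $*$-path of $Y'$-zeros spanning the shorter direction of the rectangle, hence of length at least $L-2$. With at most $2aL$ possible starting positions and a per-path bound of $(C(1-p))^{L-2}$, a union bound gives crossing-failure probability at most $2aL\cdot(C(1-p))^{L-2}$. Choosing $p$ close enough to $1$ (independent of $L$) so that $C(1-p)\leq e^{-3}$ bounds this by $(aL)^2 e^{-2L}$ up to a harmless constant, which can be absorbed by taking $p$ slightly larger.

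When a $Y'$-crossing exists, it is contained in $\calM_{E_e^k}$ since $Y'\leq Y$, and the component of $\calM_{E_e^k}$ carrying it has diameter at least $2aL-2$. In the very-high-density regime, every other component of $\calM_{E_e^k}$ sits inside a small island of $Y'$-zeros, of diameter $O(\log(aL))$ with probability at least $1-(aL)^2 e^{-2L}$, so the crossing component is the unique maximum and equals $\calC_{E_e^k}(X)$. It reaches the interior columns adjacent to $\varrho(e,i,k)$ and $\varrho(e,j,k)$, and the sites of these two $\varrho$'s neighbouring the giant component are Markov candidates: using the reference elements $\MarkovEl_v$ provided by~\ref{hyp:Markov:local_Markov_finite_energy}, one completes the partial configuration outside $E_e^k$ into a valid Markov block at each such boundary site. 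This yields $\tilde{\calC}_{E_e^k}(X)\cap\varrho(e,i,k)\neq\varnothing$ and $\tilde{\calC}_{E_e^k}(X)\cap\varrho(e,j,k)\neq\varnothing$, hence $X_{E_e^k}\in\good_e^k$.

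The main technical obstacle is the uniqueness of the maximal-diameter component: unlike the square of Claim~\ref{claim:good_block_proba}, the aspect ratio of $E_e^k$ is only $2a$, so a long non-crossing cluster could in principle rival the crossing in diameter. This is overcome by exploiting the very-high-density regime, where all non-crossing components are microscopic. The remaining bookkeeping---translating the $Y'$-crossing into a Markov-candidate crossing on $\partial^{\rmi}E_e^k$---is routine and controlled by~\ref{hyp:Markov:local_Markov_finite_energy}.
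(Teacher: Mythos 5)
Your proposal is correct and follows essentially the same route as the paper, which itself only sketches this claim as a repeat of Claim~\ref{claim:good_block_proba}: LSS domination of the field $\mathds{1}_{\MarkovSet_x}(X_{\Lambda_1(x)})$ by high-density Bernoulli site percolation, then a Peierls bound showing that failure of the long-direction crossing of the $2aL\times L$ rectangle forces a short $*$-crossing of closed sites of length of order $L$, with $2aL$ starting points. One small remark: the Markov-candidate property of the boundary sites does not need the reference elements of~\ref{hyp:Markov:local_Markov_finite_energy} (which the claim does not assume) --- since $X\sim\nu$ is an infinite-volume configuration, the actual restriction of $X$ outside $E_e^k$ already serves as the witness $\zeta$, exactly as in the paper's proof of Claim~\ref{claim:good_block_proba}.
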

	\begin{proof}
		Proceed as in the proof of Claim~\ref{claim:good_block_proba}: compare \(\tilde{\calC}_{E_{e}^k}\) with a Bernoulli percolation cluster (of parameter \(p\) close to one when \(\pbulk\) is close to \(1\)). The absence of long crossing is then equivalent to the presence of short \(*\)-crossing of closed sites, which has exponential decay for \(p\) large enough (\(p\) larger than \(1-e^{-2}/16\) is amply sufficient).
	\end{proof}

	\subsection{Construction of the sampling functions}
	\label{sec:sampling_fct}
	
	\subsubsection*{Bulk spin functions}
	
	There are
	two
	kinds of ``blocks'' in the coarse-graining of Section~\ref{sec:prf_main:CG}
	\begin{itemize}
		\item blocks associated to elements of \(\Gamma\): \(\rmB(i), i\in\Gamma\),
		\item blocks associated to elements of \(E_{\Gamma}\times \{\pm 1,\dots, \pm r^{L}\}\): \(E_{e}^k\), \(e\in E_{\Gamma}\), \(k\in \{\pm 1,\dots, \pm r^{L}\}\).
	\end{itemize}
	
	Introduce the thresholds\footnote{They correspond to the amounts of mass that are common to all volumes and conditioning outside of some suitable neighbourhood of the concerned blocs.} associated to (in order): \(\Gamma\)-indexed blocks, and labelled-\(E_{\Gamma}\)-blocks:
	\begin{gather*}
		s_i^*(\omega) = \inf_{t}\inf_{\Lambda\supset \calR(\rmB'(i))}\inf_{\xi} \nu_{\Lambda,\calR(\rmB(i))}^{t,\xi}\big( X_{\rmB(i)} = \omega \big),
		\quad \omega\in \Omega_{\calR(\rmB(i))};
		\\
		s_{e,k}^*(\omega) = \inf_{t}\inf_{\Lambda\supset \calR(\Lambda_{L}(E_e^k))} \inf_{\xi} \nu_{\Lambda,\calR(\Lambda_L(E^{k}_e))}^{t,\xi}\big( X_{E^k_{e}} = \omega\big),
		\quad \omega\in \Omega_{\calR(E^k_{e})};
	\end{gather*}where the inf over \(\xi\) is always over configurations having positive probability under \(\nu_{\Lambda}^t\). Also introduce the corresponding renormalized quantities:
	\begin{gather*}
		S_{a}^* =\sum_{\eta\in \Omega_{D_a}} s_{a}^*(\eta),\quad  p_{a}^*(\omega) = \frac{1}{S_{a}^*}s_{a}^*(\omega),
	\end{gather*}where \(a \in \{i,(e,k)\}\) and \(D_a\) is the corresponding volume. Denote \(P_a^*\) the probability measure with density \(p_a^*\).
	
	Define now for each \(i\in [\Lambda]_{\bulk}\), each \( \Delta\subset \Lambda\setminus \calR(\rmB'(i))\), and each \(\zeta \in \Omega_{\Delta}\)
	\begin{equation*}
		r(\omega,i,\Delta,\zeta) = \nu_{\Lambda,\Lambda\setminus \Delta}^{t,\zeta}\big( X_{\rmB(i)} = \omega \big) - s_i^*(\omega) \geq 0.
	\end{equation*}In the same fashion, for \(e=\{i,j\}\in E_{\Gamma}\) with \(i,j\in [\Lambda]_{\bulk}\), \(k \in \{\pm 1,\dots, \pm r^{L}\}\), \( \Delta\subset \Lambda\setminus \calR(\Lambda_{L}(E_{e}^k))\), and \(\zeta \in \Omega_{\Delta}\), let
	\begin{equation*}
		r\big(\omega,(e,k),\Delta,\zeta\big) = \nu_{\Lambda,\Lambda\setminus \Delta}^{t,\zeta}\big( X_{E_{e}^k} = \omega \big) - s_{e,k}^*(\omega) \geq 0.
	\end{equation*}
	
	To avoid repetition, let \(D \in \{ \rmB(i),\ E_{e}^k\}\) be fixed, and let \(a\in \{i,(e,k)\}\) be the corresponding block label, and \(D'\in \{\rmB'(i), \Lambda_{L}(E_{e}^k)\}\) be the neighbourhood of \(D\). The associated function is then defined as follows. Let \(\{\omega_1,\omega_2,\dots\}\) be an enumeration of the elements of \(\Omega_{\calR(D)}\) (which is at most countable). For \(\Delta \subset \Lambda\setminus \calR(D')\), and \(\zeta\in \Omega_{\Delta}\), let
	\begin{gather*}
		\alpha_{0} = 0,\quad \alpha_{m} = \sum_{l=1}^m s_{a}^*(\omega_l),\quad \beta_{0} = 0,\quad \beta_{m}(a,\Delta,\zeta) = \sum_{l=1}^m r(\omega_l,a,\Delta,\zeta), \\
		I_{m} = [\alpha_{m-1},\alpha_{m}),\quad J_{m}(a,\Delta,\zeta) = \big[S_{a}^* + \beta_{m-1}(a,\Delta,\zeta), S_{a}^* + \beta_{m}(a,\Delta,\zeta)\big).
	\end{gather*}Then, define the functions \(f_{a,\Delta}^{\zeta}:[0,1)\to \Omega_{\calR(D)}\) via
	\begin{equation*}
		f_{a,\Delta}^{\zeta}(u) = \omega_m \text{ if } u\in I_{m} \sqcup J_{m}(a,\Delta,\zeta).
	\end{equation*}
	\begin{remark}
		If \(U\) is uniform on \([0,1)\), \(f_{a,\Delta}^{\zeta}(U)\) has law \(\nu_{\Lambda,\Lambda\setminus \Delta}^{t,\zeta}\) restricted to \(\calR(D)\). Moreover, if \(0\leq u< S_a^*\), \(f_{a,\Delta}^{\zeta}(u) = f_{a,\tilde{\Delta}}^{\tilde{\zeta}}(u)\) for any \(\Delta,\tilde{\Delta}\subset \Lambda\setminus \calR(D')\) and \(\zeta\in \Omega_{\Delta}\), \(\tilde{\zeta}\in \Omega_{\tilde{\Delta}}\).
	\end{remark}
	
	\subsubsection*{Boundary spin functions}
	
	The functions \(f\)'s will be used in the bulk and their structure plays with the mixing properties of the model. For the remaining sites, the only thing used will be finite energy.
	
	For \(\Delta\subset \Lambda\), \(W\subset \calS(\Lambda)\setminus \calS(\Delta)\), and \(\zeta\in \Omega_{\Delta}\), let \(g_{W,\Delta}^{\zeta}:[0,1)\to \Omega_{\calR(W)\cap \Lambda}\) be a measurable function such that
	\begin{itemize}
		\item if \(U\) is a uniform random variable on \([0,1)\), \(g_{W,\Delta}^{\zeta}(U)\sim \nu_{\Lambda,\Lambda\setminus \Delta}^{t,\zeta}|_{\calR(W)\cap \Lambda}\),
		\item if \(u< \theta^{|W|}\), then \(\big(g_{W,\Delta}^{\zeta}(u)\big)_{\Lambda_l(x)} = \MarkovEl_{x,\Lambda}^t\) for any \(x\in W\) such that \(\Lambda_{l+l'}(x)\cap \Lambda^c\neq \varnothing\), and \(\big(g_{W,\Delta}^{\zeta}(u)\big)_{\Lambda_l(x)} = \MarkovEl_{x}\) for \(x\in W\) such that \(\Lambda_{l+l'}(x) \subset \Lambda\).
	\end{itemize}Such functions can be constructed in the same fashion as the \(f\)'s. The fact that the second point can be realized follows straightforwardly from Hypotheses~\ref{hyp:Markov:local_Markov_finite_energy}.

	\subsubsection*{Sampling estimates}
	
	The relevant claims concerning the thresholds/sampling functions are the following.
	\begin{claim}
		\label{claim:threshold_good_blocks}
		Suppose hypotheses~\ref{hyp:Mix:inf_vol_meas}, and~\ref{hyp:Mix:exp_rel_density} holds. Then, for any \( a\geq \frac{4\ln(r)}{\cmix l'}\), any \(i\in \Gamma\), and any \(A\subset \Omega_{\calR(\rmB(i))}\),
		\begin{equation*}
			S_i^* P_i^*(A) = \sum_{\omega\in \Omega_{\calR(\rmB(i))}} \mathds{1}_{A}(\omega) s_i^*(\omega) \geq \nu(X_{\rmB(i)}\in A) - 81\Cmix (Ll')^2 r^{-2L}.
		\end{equation*}
	\end{claim}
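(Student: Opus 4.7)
The plan is to apply~\ref{hyp:Mix:exp_rel_density} to the unscaled box $\calR(\rmB(i))$ with conditioning buffer $\calR(\rmB'(i))\setminus\calR(\rmB(i))$, then pass from the resulting total-mass bound to the $A$-restricted bound via the pointwise inequality $s_i^*(\omega) \leq \nu(X_{\rmB(i)} = \omega)$, which itself follows from~\ref{hyp:Mix:inf_vol_meas}.

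First I would translate the coarse-grained quantities to the unit lattice on which~\ref{hyp:Mix:exp_rel_density} lives. Since $\rmB(i) = \Lambda_{L_1}(i)$ with $L_1 = 3Lr^L$ and $\rmB'(i) = \Lambda_{aL}(\rmB(i))$, the set $\calR(\rmB(i))$ is a square of side $(2L_1+1)l'$ and the buffer has thickness $aLl'$ on each side, so $\Lambda_{aLl'}(\calR(\rmB(i))) = \calR(\rmB'(i))$. Applying~\ref{hyp:Mix:exp_rel_density} with $n=m=(2L_1+1)l'-1$ and $k=aLl'$, combined with $(2L_1+1)^2 \leq (3L_1)^2 = 81\,L^2 r^{2L}$, yields
\begin{equation*}
\sum_{\omega \in \Omega_{\calR(\rmB(i))}} s_i^*(\omega) \,\geq\, 1 - 81\,\Cmix\,(Ll')^2\, r^{2L}\, e^{-\cmix a L l'}.
\end{equation*}
The assumption $a \geq 4\ln(r)/(\cmix l')$ gives $\cmix a L l' \geq 4L\ln r$, hence $r^{2L} e^{-\cmix a L l'} \leq r^{-2L}$, so the bound becomes $\sum_\omega s_i^*(\omega) \geq 1 - 81\,\Cmix\,(Ll')^2\, r^{-2L}$.

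Next I would verify the pointwise inequality $s_i^*(\omega) \leq \nu(X_{\rmB(i)} = \omega)$. Fix any $t_0 \in T$ and any sequence $\Delta_k \Subset \Z^2$ with $\Delta_k \to \Z^2$: since $\nu_{\Delta_k}^{t_0}(X_{\rmB(i)} = \omega)$ is an average over $\xi$ of the conditional probabilities appearing in the definition of $s_i^*$, it is bounded below by $s_i^*(\omega)$. Letting $k \to \infty$ and invoking~\ref{hyp:Mix:inf_vol_meas} concludes.

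Finally, combining the two steps, for any $A \subset \Omega_{\calR(\rmB(i))}$,
\begin{equation*}
\sum_\omega \mathds{1}_A(\omega) s_i^*(\omega) = \sum_\omega s_i^*(\omega) - \sum_{\omega \notin A} s_i^*(\omega) \,\geq\, 1 - 81\,\Cmix\,(Ll')^2\, r^{-2L} - \nu(X_{\rmB(i)} \notin A),
\end{equation*}
which rearranges to the desired bound. No substantive obstacle is expected; the only bookkeeping subtlety is tracking the $l'$ rescaling factors when translating coarse-grained dimensions to the original lattice where~\ref{hyp:Mix:exp_rel_density} is stated.
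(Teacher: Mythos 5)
Your proof is correct and follows essentially the same route as the paper's: apply~\ref{hyp:Mix:exp_rel_density} to $\calR(\rmB(i))$ with buffer thickness $aLl'$ to bound the total mass $\sum_\omega s_i^*(\omega)$ from below, use $a\geq 4\ln(r)/(\cmix l')$ to absorb the $r^{2L}$ factor, and combine with the pointwise bound $s_i^*(\omega)\leq\nu(X_{\rmB(i)}=\omega)$ from~\ref{hyp:Mix:inf_vol_meas} applied to $A^\comp$. The only difference is that you spell out the averaging-and-limit argument for the pointwise inequality, which the paper states without proof.
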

	\begin{proof}
		By~\ref{hyp:Mix:inf_vol_meas}, one has that for any \(\omega\in \Omega_{\calR(\rmB(i))}\), \(\nu(X_{\rmB(i)} = \omega) \geq s_i^*(\omega)\). In particular,
		\begin{equation*}
			\sum_{\omega\in \calR(\Omega_{\rmB(i))}} \mathds{1}_{A^c}(\omega) s_i^*(\omega) \leq \sum_{\omega\in \Omega_{\calR(\rmB(i))}} \mathds{1}_{A^c}(\omega) \nu(X_{\rmB(i)} = \omega) = \nu(X_{\rmB(i)}\in A^c).
		\end{equation*}
		Then, by~\ref{hyp:Mix:exp_rel_density}, the definitions of \(L_1\), and the hypotheses on \(a\),
		\begin{multline*}
			\sum_{\omega\in \Omega_{\calR(\rmB(i))}} s_i^*(\omega) 
			\geq 1- 9\Cmix (L_1l')^2e^{-\cmix aLl'}
			\\
			= 1- 81\Cmix (Ll')^2 e^{2\ln(r)L-\cmix aLl'}
			\geq 1- 81\Cmix (Ll')^2 e^{-2\ln(r)L},
		\end{multline*}where \(\Cmix,\cmix\) are the values given in~\ref{hyp:Mix:exp_rel_density}, and \(C\) is some universal constant. Combining the two, one obtains
		\begin{multline*}
			\sum_{\omega\in \Omega_{\calR(\rmB(i))}} \mathds{1}_{A}(\omega) s_i^*(\omega)
			= \sum_{\omega\in \Omega_{\calR(\rmB(i))}} s_i^*(\omega) - \sum_{\omega\in \Omega_{\calR(\rmB(i))}} \mathds{1}_{A^c}(\omega) s_i^*(\omega)
			\\
			\geq 1- C\Cmix (Ll')^2 e^{-2\ln(r)L} - \nu(X_{\rmB(i)}\in A^c)
			= \nu(X_{\rmB(i)}\in A) - 81\Cmix (Ll')^2 r^{-2L}.
		\end{multline*}
	\end{proof}
	
	\begin{claim}
		\label{claim:threshold_good_edges}
		Suppose hypotheses~\ref{hyp:Mix:inf_vol_meas}, and~\ref{hyp:Mix:exp_rel_density} holds. Then, for any \(e\in E_{\Gamma}\), \(k\in \{\pm 1,\dots,\pm r^{L}\}\), and any \(A\subset \Omega_{E_e^k}\),
		\begin{equation*}
			S_{e,k}^* P_{e,k}^*(A) = \sum_{\omega\in \Omega_{E_{e}^k}} \mathds{1}_{A}(\omega) s_{e,k}^*(\omega) \geq \nu(X_{E_e^k}\in A\big) - 2a\Cmix (Ll')^2 e^{-\cmix Ll'}.
		\end{equation*}
	\end{claim}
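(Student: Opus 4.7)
The strategy mirrors the proof of Claim~\ref{claim:threshold_good_blocks} verbatim; only the geometric parameters fed into~\ref{hyp:Mix:exp_rel_density} change, because the block $E_e^k$ is a rectangle of blocked-dimensions roughly $2aL\times L$ sitting inside its $L$-neighbourhood $\Lambda_L(E_e^k)$, instead of a square of side $2L_1+1$ sitting inside its $aL$-neighbourhood.

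First, I would use~\ref{hyp:Mix:inf_vol_meas} to note that $\nu$ appears among the measures over which the infimum defining $s_{e,k}^*$ is taken (as a limit of finite-volume measures with conditioning on configurations of vanishing influence); equivalently, one can take infinite-volume sequences realising $\nu$ and observe that the right-hand side of the inf in the definition of $s_{e,k}^*$ is bounded above by $\nu(X_{E_e^k}=\omega)$. Hence, for any $\omega\in\Omega_{\calR(E_e^k)}$, $s_{e,k}^*(\omega)\leq \nu(X_{E_e^k}=\omega)$, and therefore
\begin{equation*}
\sum_{\omega\in\Omega_{\calR(E_e^k)}} \mathds{1}_{A^c}(\omega)\, s_{e,k}^*(\omega) \leq \nu(X_{E_e^k}\in A^c).
\end{equation*}

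Next, I would apply~\ref{hyp:Mix:exp_rel_density} to the rectangle $\calR(E_e^k)$. After unrescaling, $\calR(E_e^k)$ is a microscopic rectangle of dimensions at most $2aL\,l'\times L\,l'$, and $\calR(\Lambda_L(E_e^k))$ contains its microscopic $L\,l'$-neighbourhood. Thus the choice $n= 2aL\,l'$, $m=L\,l'$, $k=L\,l'$ in~\ref{hyp:Mix:exp_rel_density} yields
\begin{equation*}
\sum_{\omega\in\Omega_{\calR(E_e^k)}} s_{e,k}^*(\omega) \;\geq\; 1 - \Cmix\cdot(2aL\,l')(L\,l')\, e^{-\cmix L\,l'} \;=\; 1 - 2a\,\Cmix (Ll')^2 e^{-\cmix L l'}.
\end{equation*}

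Subtracting the first bound from the second gives
\begin{equation*}
\sum_{\omega} \mathds{1}_{A}(\omega)\, s_{e,k}^*(\omega) \;\geq\; 1 - 2a\,\Cmix (Ll')^2 e^{-\cmix L l'} - \nu(X_{E_e^k}\in A^c) \;=\; \nu(X_{E_e^k}\in A) - 2a\,\Cmix (Ll')^2 e^{-\cmix L l'},
\end{equation*}
which is exactly the claimed inequality. There is no real obstacle here; the only thing one must check carefully is the geometric bookkeeping when translating the blocked dimensions of $E_e^k$ and of its $L$-neighbourhood into the microscopic units required by~\ref{hyp:Mix:exp_rel_density}, so that the prefactor $2a(Ll')^2$ and the decay rate $e^{-\cmix L l'}$ come out as stated (note that, in contrast to Claim~\ref{claim:threshold_good_blocks}, no condition of the form $a\geq 4\ln(r)/(\cmix l')$ is needed, because the $r^{-2L}$ bound there arose from converting $e^{-\cmix a L l'}$ into a power of $r$, whereas here we keep the raw exponential $e^{-\cmix L l'}$).
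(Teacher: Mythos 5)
Your proposal is correct and follows essentially the same route as the paper, which simply says ``proceeding as in the proof of Claim~\ref{claim:threshold_good_blocks}'' and records the two intermediate bounds you derive; your geometric bookkeeping (blocked dimensions $\approx 2aL\times L$, separation $L$ blocks, hence $n\approx 2aLl'$, $m\approx Ll'$, $k=Ll'$ in~\ref{hyp:Mix:exp_rel_density}) reproduces the stated error term $2a\Cmix(Ll')^2e^{-\cmix Ll'}$ exactly, and your observation that no condition on $a$ is needed here is consistent with the statement.
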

	\begin{proof}
		Proceeding as in the proof of Claim~\ref{claim:threshold_good_blocks}, one obtains
		\begin{gather*}
			\sum_{\omega\in \Omega_{\calR(E_e^k)}} \mathds{1}_{A^c}(\omega) s_{e,k}^*(\omega) \leq \nu(X_{E_e^k}\in A^c\big),\\
			\sum_{\omega\in \Omega_{\calR(E_e^k)}} s_{e,k}^*(\omega) \geq 1 - 2a\Cmix (Ll')^2 e^{-\cmix Ll'}.
		\end{gather*}The claim follows as in the proof of Claim~\ref{claim:threshold_good_blocks}.
	\end{proof}

	\subsection{Definition of the patch exploration process}
	\label{sec:exploration_process}
	
	To sample the spin configuration, the functions used are: for \(\Delta\subset \Lambda,\ W\subset \calS(\Lambda)\setminus \calS(\Delta),\ \xi\in \Omega_{\Delta}\),
	\begin{equation*}
		F_{\Delta;W}^{\xi} =\begin{cases}
			f_{i,\Delta}^{\xi} & \text{ if } W = \rmB(i),\ i\in [\Lambda]_{\bulk},\\
			f_{(e,k),\Delta}^{\xi} & \text{ if } W = E_{e,k},\ e\subset [\Lambda]_{\bulk},\\
			g_{W,\Delta}^{\xi} & \text{ else}.
		\end{cases}.
	\end{equation*}
	
	Consider the planar labelled (multi)graph \(G_{\ell}= (\Gamma, E_{\ell})\) (\(\ell\) for \emph{labelled}) with vertex set \(\Gamma\) and \(2 r^L\) edges between neighbouring sites, labelled \(\pm 1,\dots \pm r^L\), from the ``inside'' to the ``outside'' (see Figure~\ref{Fig:multigraph})
	
	\begin{figure}[h]
		\centering
		\includegraphics[scale=0.8]{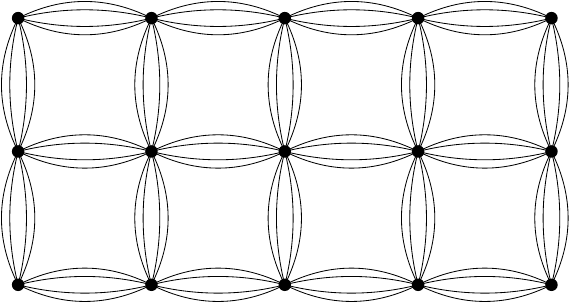}
		\hspace*{1cm}
		\includegraphics[scale=0.8]{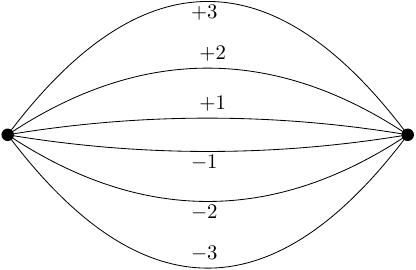}
		\caption{The labelled multigraph \(G\).}
		\label{Fig:multigraph}
	\end{figure}
	
	Now, the sequence of set valued functions \(\calA_{k}, k\geq 0\) will be given by the simultaneous exploration of a percolation configuration and of a spin configuration. It is given by Algorithm~\ref{alg:patch_exploration}, which requires a few preparations. Consider site-edges percolation configurations on \(G_{\ell}\) with all sites of \([\Lambda]_{\ext}\) open, all edges with at least one endpoint in \([\Lambda]_{\ext}\) open, and all edges with both endpoints in \(\Gamma\setminus [\Lambda]_{\bulk}\) open:
	\begin{multline*}
		\Theta_{G_{\ell},\Lambda} = \big\{\eta\in \{0,1\}^{\Gamma\cup E_{\ell}}:\\
		\eta_i = 1 \text{ if } i\in [\Lambda]_{\ext},\ \eta_{(e,k)} = 1 \text{ if } e\cap [\Lambda]_{\ext}\neq \varnothing \text{ or } e\subset \Gamma\setminus [\Lambda]_{\bulk} \big\}
	\end{multline*}
	
	For \(A\subset \Gamma\), \(\eta\in \Theta_{G_{\ell},\Lambda}\), let \(\gamma_{A}(\eta)\) be the innermost set of open sites and edges surrounding \(A\) in \(\eta\) (it is a self-avoiding path if \(A\) is connected, but can also be a collection of disjoints self-avoiding paths). For \(k\geq 0\), consider functions
	\begin{gather*}
		\rmW_k: \{(B,\eta_B):\ \eta_B\in\{0,1\}^{B} ,\ B\subset \Gamma\cup E_{\ell}, |B| = k \}\to \Gamma\cup E_{\ell} ,\\
		\tau_k: \{(B,\eta_B):\ \eta_B\in\{0,1\}^{B} ,\ B\subset \Gamma\cup E_{\ell}, |B| = k \} \to \{0,1\},
	\end{gather*}which satisfy
	\begin{itemize}
		\item \(\rmW_k(B,\eta_B)\notin B\) for any \(B,\eta_B\),
		\item \(\tau_k(B,\eta_B) = 1\) implies \(\tau_{k+1}(B',\eta_{B'}) = 1\) for any \(B\subset B'\) and \(\eta_B,\eta_{B'}\) satisfying \(\eta_B(x) = \eta_{B'}(x)\) for all \(x\in B\).
	\end{itemize}For a percolation configuration \(\eta\in \Theta_{G_{\ell},\Lambda}\), define then inductively (\(\rmW_0\) is a constant function)
	\begin{gather*}
		\calW_0 = \rmW_0,\quad \calB_1 = \{\calW_0\},\\
		\calW_{k}(\eta) = \rmW_{k}(\calB_{k}(\eta), \eta_{\calB_{k}}),\quad \calB_{k}(\eta) = \calB_{k-1}(\eta) \cup \{\calW_{k-1}(\eta)\},\\
		\calT(\eta) = \min\big\{k\geq 0:\ \tau_k(\calB_{k}(\eta), \eta_{\calB_{k}})= 1\big\}.
	\end{gather*}One then has that \((\calB_{\calT}, \calT)= (B,t)\) is an event measurable with respect to the restriction of \(\eta\) to \(B\) (i.e., \(\eta_B\)). The process \(\big(\calB_k(\eta), \eta_{\calB_k}\big)_{t=0}^{\calT(\eta)}\) is called a \emph{stopped exploration of the percolation configuration \(\eta\)}. Consider such a stopped exploration so that
	\begin{itemize}
		\item \(\calB_{\calT}(\eta)\) is \(\gamma_{[\Delta_2]}(\eta)\) union some edges and sites surrounded by \(\gamma_{[\Delta_2]}(\eta)\) (an exploration of \(\gamma_{[\Delta_2]}\) from inside);
		\item \(x\in \calB_{k}(\eta)\setminus \calB_{k-1}(\eta)\) can be an element \(\{i,j\}\) of \(E_{\ell}\) only if \(i,j\in \calB_{k-1}(\eta)\) (edges are explored only when their endpoints are already explored). In particular, \(\calW_0 \in \Gamma\);
		\item the edge \((e,k)\) with \(e=\{i,j\}\) is only explored if \(\eta(i) = \eta(j) = 1\);
		\item no site in \([\Delta_2]\) is explored, and no edge with an endpoint in \([\Delta_2]\) is ever explored.
	\end{itemize}Existence of such exploration process is by now a standard thing in percolation theory.

	Note that for the purpose of proving Theorem~\ref{thm:main}, it is sufficient to assume that \(\Delta_2 = \calR(\calS(\Delta_2))\cap\Lambda\) as the percolation model does not see the difference for \(\ell\) large enough, and the measures on the L.H.S. are mixtures of the case \(\Delta_2 = \calR(\calS(\Delta_2))\cap\Lambda\). Now, for \(\xi\in \Omega_{\Delta_2}\) and a sequence \(u_0,u_1,\dots\in[0,1)\), define \((\calA,\vartheta_{\calA}, \calB, \eta_{\calB})\) with \(\calA\subset \calS(\Lambda)\), \(\vartheta_{\calA}\in \Omega_{\calR(\calA)}\), \(\calB\Subset \Gamma\cup E_{\ell}\), \(\eta\in \{0,1\}^{\calB}\) via Algorithm~\ref{alg:patch_exploration}. \((\calA,\vartheta_{\calA})\) is a patch exploration of \(\nu_{\Lambda,\Lambda\setminus \Delta_2}^{t,\xi}\circ \psi^{-1}\) stopped at a stopping time. \((\eta_{\calB}, \calB)\) is a stopped exploration of some percolation model. The properties which follow from the construction are:
	\begin{itemize}
		\item \(\calB\) is the union of a collection of closed self-avoiding site-edge paths, \(\gamma\), which surrounds \([\Delta_2]\) with some edges and sites in the set \([\Delta]_2\)-surrounded by \(\gamma\). \(\eta_{\calB}\) is open on \(\gamma\);
		\item \((\calB, \calA, \eta_{\calB})\) does not depend on \(\xi\);
		\item if \(i\in \Gamma\cap \calB\) is such that \(\eta(i) = 1\), then \(\vartheta_{\rmB(i)}\) is good and does not depend on \(\xi\);
		\item if \((e,k)\in E_{\ell}\cap \calB\) is such that \(\eta(e,k) = 1\), then \(\vartheta_{\calA \cap \Lambda_1(F_{e,k})}\) does not depend on \(\xi\);
		\item denoting
		\begin{equation*}
			\calD = \calR\Big(\bigcup_{i\in \Gamma\cap \gamma}\rmB(i)\cup \bigcup_{(e,k)\in E_{\ell}\cap \gamma} \Lambda_1(F_{e,k})\Big)\cap \Lambda,
		\end{equation*}one has that \(\psi^{-1}(\vartheta)\) restricted to \(\calD\) is a decoupling configuration in the sense of Hypotheses~\ref{hyp:Markov:Decoupling_circuits}.
	\end{itemize}
	
	\begin{algorithm}[H]
		\label{alg:patch_exploration}
		\KwData{\(u=(u_0,u_1,\dots)\), \(\xi\in \Omega_{\Delta_2}\)}
		\KwResult{\(\calS(\Delta_2)\subset \calA\subset \Lambda\), \(\vartheta_{\calA} \in \Omega_{\calA}\), \(\calB \Subset \Gamma\cup E_{\ell}\), \(\eta_{\calB}\in \{0,1\}^{\calB}\)}
		Initialise \(l=0\), \(m=0\), \(\calA_0 = \calS(\Delta_2)\), \(\vartheta_{\calS(\Delta_2)} = \psi_{\Lambda}(\xi)\)\;
		\While{\(\tau_l(\calB_l,\eta_{\rmB_l}) = 0\)}{
			Set \(w = \rmW_l(\calB_l, \eta_{\calB_l})\)\;
			\If{\(w\in \Gamma\)}{
				Update \(m=m+1\)\;
				Set \(\vartheta_{\rmB(w)} = F_{\calA_{m-1};\rmB(w)}^{\vartheta_{\calA_{m-1}}} (u_m)\), \(\calA_{m} = \calA_{m-1}\cup \rmB(w)\)\;
				\eIf{\(u_m<S_w^*\) \Aand \(\theta_{\rmB(w)}\in \good_w \)}{
					Set \(\eta(w) = 1\)\;
				}{
					Set \(\eta(w) = 0\)\;
				}
			}
			\If{\(w = (e,k) = (\{i,j\},k) \in E_{\ell}\)}{
				Set \(\vartheta_{E_{e}^k} = F_{\calA_{m-1};E_{e}^k}^{\vartheta_{\calA_{m-1}}} (u_{m+1})\), \(\calA_{m+1} = \calA_{m}\cup E_{e}^k\)\;
				\eIf{\(u_{m+1}<S_{(e,k)}^*\) \Aand \(\vartheta_{E_{e}^k}\in \good_{e}^k \)}{
					Let \(A_i= \Lambda_1(\gamma_{e,i}(\vartheta_{\rmB(i)},\vartheta_{ E_{e}^k}))\), \(A_j= \Lambda_1(\gamma_{e,j}(\vartheta_{\rmB(j)},\vartheta_{ E_{e}^k}))\) be the canonical sets associated respectively to the good configurations \(\vartheta_{\rmB(i)}, \vartheta_{E_{e}^k}\) and \(\vartheta_{\rmB(j)}, \vartheta_{E_{e}^k}\)\;
					Set \(\calA_{m+2} = \calA_{m+1} \cup A_i\), \(\calA_{m+3} = \calA_{m+2}\cup A_j\)\;
					Set \(\vartheta_{A_i} = F_{\calA_{m+1};A_i}^{\vartheta_{\calA_{m+1}}}(u_{m+2}) \), and \(\vartheta_{A_j} = F_{\calA_{m+2};A_j}^{\vartheta_{\calA_{m+2}}}(u_{m+3}) \)\;
					\If{\(u_{m+2}< \theta^{2L}\) \Aand \(u_{m+3}< \theta^{2L}\)}{
						Set \(\eta(w) = 1\)\;
					}
				}{
					Set \(\eta(w) = 0\)\;
					Set \(\vartheta_{(F_{e}^k\setminus E_{e}^k)\cap \Lambda_K(i)} = F_{\calA_{m+1};(F_{e}^k\setminus E_{e}^k)\cap \Lambda_K(i)}^{\vartheta_{\calA_{m+1}}} (u_{m+2})\), \(\calA_{m+2} = \calA_{m+1}\cup ((F_{e}^k\setminus E_{e}^k)\cap \Lambda_K(i)) \)\;
					Set \(\vartheta_{(F_{e}^k\setminus E_{e}^k)\cap \Lambda_K(j)} = F_{\calA_{m+2};(F_{e}^k\setminus E_{e}^k)\cap \Lambda_K(j)}^{\vartheta_{\calA_{m+2}}} (u_{m+3})\), \(\calA_{m+3} = \calA_{m+2}\cup ((F_{e}^k\setminus E_{e}^k)\cap \Lambda_K(j)) \)\;
				}
				Update \(m=m+3\)\;
			}
			Update \(l=l+1\)\;
			Set \(\calB_{l} = \calB_{l-1}\cup \{w\}\)\;
		}
		\Return{\(\calB= \calB_l, \calA = \calA_{m}, \vartheta_{\calA}, \eta_{\calB}\)\;}
		\caption{Patch sampling algorithm. The spins are sampled in one go when a site of the percolation configuration is explored, while the spins are sampled first in \(E_{e}^k\) and then in \(F_{e}^k\setminus E_{e}^k\) (in two steps) when the edge \((e,k)\) is explored. Note that the first case of edge exploration is well defined as edges are only explored when their endpoints are open (so that the spin configurations on the blocks associated to the endpoints are good).}
	\end{algorithm}
	
	As a consequence of the properties of Algorithm~\ref{alg:patch_exploration}, letting (\(\calD\) defined as above)
	\begin{itemize}
		\item \(\bar{U} = U_0,U_1,\dots\) be an i.i.d. sequence of uniform on \([0,1)\),
		\item \(\calA = \calA(\bar{U}),\quad Y_{\calA}^{\xi} = \psi^{-1}(\vartheta_{\calA}(\bar{U},\xi))\) and\begin{equation*}
			\calJ = \{x\in \Lambda:\ x\in \calD(\bar{U}) \text{ or } x \ \Delta_2\text{-surrounded by } \tilde{\calD}(\bar{U})\},
		\end{equation*}where \(\tilde{\calD} = \calD\cup E_{\calD}\), with \(E_{\calD}\) the set of n.n. edges with both endpoints in \(\calD\),
	\end{itemize}one has that \(\nu_{\Lambda, \Lambda \setminus \calR(\calA)}^{t,Y_{\calA}^{\xi}}|_{\Lambda\setminus\calJ}\) does not depend on \(\xi\). For \(A,B\subset \Lambda\) disjoint, and \(\zeta_A\in \Omega_{A}\), let \(Z_{A}^{\zeta_A} \sim \nu_{\Lambda,\Lambda\setminus A}^{t,\zeta_A}\) be random variables independent of \(\bar{U}\) (all living on the same probability space). Then, by Lemma~\ref{lem:patchExploration_sampling} point 2., and the observation above, one has that \(Y_{\calA}^{\xi} Z_{\calR(\calA)}^{Y_{\calA}^{\xi}} \sim \nu_{\Lambda,\Lambda\setminus \Delta_2}^{t,\xi}\). In particular, for any \(\Delta_1\subset \Lambda\setminus \Delta_2\), any \(D\subset \Omega_{\Delta_1}\), and any \(\xi,\xi'\in \Omega_{\Delta_2}\), (abbreviating \(Y_{\calA} = Y^{\xi}_{\calA}\) and \(Y_{\calA}' = Y^{\xi'}_{\calA}\))
	
	\begin{multline*}
		|\nu_{\Lambda,\Lambda\setminus \Delta_2}^{t,\xi}(\sigma_{\Delta_1}\in D) - \nu_{\Lambda,\Lambda\setminus \Delta_2}^{t,\xi'}(\sigma_{\Delta_1}\in D)|
		=
		\big|\rmE\big(\mathds{1}_{Y_{\calA} Z_{\calA}^{Y_{\calA}}|_{\Delta_1}\in D} - \mathds{1}_{Y_{\calA}' Z_{\calA}^{Y_{\calA}'}|_{\Delta_1}\in D} \big)\big|
		\\
		\leq
		\rmE(\mathds{1}_{\Delta_1\cap \calJ \neq \varnothing}) + \sum_{A}\sum_{J: J\cap \Delta_1= \varnothing} \big|\rmE\big(\mathds{1}_{\calA= A}\mathds{1}_{\calJ= J}(\mathds{1}_{Z_{\calA}^{Y_{\calA}}|_{\Delta_1}\in D} - \mathds{1}_{Z_{\calA}^{Y_{\calA}'}|_{\Delta_1}\in D})\big)\big|
		\\
		=
		\rmE(\mathds{1}_{\Delta_1\cap \calJ \neq \varnothing}) + \sum_{A}\sum_{J: J\cap \Delta_1= \varnothing}
		\Big|\rmE\Big(\mathds{1}_{\calA= A, \calJ = J}\big(\nu_{\Lambda,\Lambda\setminus A}^{t,Y_A}|_{\Delta_1}(D) - \nu_{\Lambda,\Lambda\setminus A}^{t,Y_A'}|_{\Delta_1}(D)\big)\Big)\Big|
		\\
		=
		\rmP(\Delta_1\cap \calJ \neq \varnothing)
		\leq
		\rmP([\Delta_1] \text{ is not separated from } [\Delta_2] \text{ in } \eta),
	\end{multline*}as \(\calA,\calB\) (and thus \(\calD,\calJ\)) do not depend on \(\xi,\xi'\), and, for \(\calJ=J\) such that \(J\cap \Delta_1=\varnothing\), the restriction of \(Y_{\calA}^{\xi} Z_{\calA}^{Y_{\calA}}\) to \(\Delta_1\) is simply the restriction of \(Z_{\calA}^{Y_{\calA}}\) to \(\Delta_1\), and the \(Z\)'s are independent from the \(U\)'s. In the last line, the independence of \(\nu_{\Lambda,\Lambda\setminus A}^{t,Y_A}|_{\Delta_1}\), \(\nu_{\Lambda,\Lambda\setminus A}^{t,Y_A'}|_{\Delta_1}\) on \(\xi,\xi'\) is used (so that the two measures are the same).
	
	To conclude this section, notice that the exploration of \(\eta\) has the property that uniformly over what was explored before, the probability that \(\eta(w) = 1\), with \(w\) the next site to be explored, is at least
	\begin{itemize}
		\item \(\rmp_{\rms} := \inf_{i\in \Gamma} P_i^*(\good_i)S_i^*\) if \(w\in [\Lambda]_{\bulk}\),
		\item \(\rmp_{\rme} := \theta^{4L}\inf_{e\in E_{\Gamma}}\inf_{k\in \{\pm 1,\dots,\pm r^L\}} P_{e,k}^*(\good_e^k)S_{e,k}^*\) if \(w = (e,k)\in E_{\gamma}\times\{\pm 1,\dots, \pm r^L\}\), with \(e\subset [\Lambda]_{\bulk}\),
		\item \(\rmp_{\rmb\rms} := \theta^{(2L_1+1)^2}\) if \(w\in [\Lambda]_{\bnd}\),
		\item \(\rmp_{\rmb\rme} := \theta^{(2a+2)L^2}\) if \(w=(\{i,j\},k)\) with \(i\in [\Lambda]\), \(j\notin[\Lambda]_{\bulk}\),
		\item \(1\) else.
	\end{itemize}
	
	In particular, \(\eta\) stochastically dominates a site-edge Bernoulli percolation on \((\Gamma,E_{\ell})\) with parameters given above. The bounds derived in the previous sections allow to estimate these parameters.
	\begin{claim}
		\label{claim:perco_labelled_bnds}
		There is \(p_0\in(0,1)\) such that, if Hypotheses~\ref{hyp:Mix:exp_rel_density} and~\ref{hyp:Markov:local_Markov_bulk} with \(\pbulk\geq p_0\) hold, one has that for any \(r\geq 2\), any \(a\geq \frac{4\ln(r)}{\cmix l'}\), there is \(L_0 \geq 0\) (depending on \(l\)) such that for any \(L\geq L_0\),
		\begin{equation*}
			\rmp_{\rms} \geq 1-r^{-L},\qquad \rmp_{\rme} \geq \frac{1}{2}\theta^{4L}.
		\end{equation*}
	\end{claim}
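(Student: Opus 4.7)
The proof is essentially a direct combination of the four preceding claims (Claims \ref{claim:good_block_proba}--\ref{claim:threshold_good_edges}), so my plan is simply to apply each of them to the appropriate choice of event and then pick $L_0$ large enough to absorb the error terms.

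For the bound on $\rmp_{\rms}$, I would apply Claim \ref{claim:threshold_good_blocks} with $A = \good_i$, which gives
\begin{equation*}
    S_i^* P_i^*(\good_i) \geq \nu(X_{\rmB(i)} \in \good_i) - 81\Cmix (Ll')^2 r^{-2L}.
\end{equation*}
Combined with Claim \ref{claim:good_block_proba} (which requires $\pbulk \geq p_0$ for some threshold and $L \geq L_0$), this is bounded below by $1 - e^{-L_1} - 81\Cmix(Ll')^2 r^{-2L} = 1 - e^{-3Lr^L} - 81\Cmix(Ll')^2 r^{-2L}$. Since $r \geq 2$, both error terms are of order $o(r^{-L})$ as $L\to\infty$ (the first term is double-exponentially small in $L$, and the second is $r^{-L} \cdot O(L^2 r^{-L})$), so choosing $L_0$ large enough depending on $l$ and $\Cmix$ ensures the sum of the two is at most $r^{-L}$, uniformly in $i \in \Gamma$. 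Taking the infimum over $i$ yields $\rmp_{\rms} \geq 1 - r^{-L}$.

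For the bound on $\rmp_{\rme}$, I would similarly apply Claim \ref{claim:threshold_good_edges} with $A = \good_e^k$ to get
\begin{equation*}
    S_{e,k}^* P_{e,k}^*(\good_e^k) \geq \nu(X_{E_e^k} \in \good_e^k) - 2a\Cmix (Ll')^2 e^{-\cmix Ll'},
\end{equation*}
and then use Claim \ref{claim:good_edge_proba} to lower bound the right-hand side by $1 - (aL)^2 e^{-2L} - 2a\Cmix(Ll')^2 e^{-\cmix Ll'}$. Both error terms are exponentially small in $L$, so by choosing $L_0$ large enough (depending on $a$, $l$, $\Cmix$ and $\cmix$) one has $S_{e,k}^* P_{e,k}^*(\good_e^k) \geq \frac{1}{2}$, uniformly over $e \in E_\Gamma$ and $k \in \{\pm 1,\dots,\pm r^L\}$. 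Multiplying by $\theta^{4L}$ and taking the infimum produces the desired inequality $\rmp_{\rme} \geq \frac{1}{2}\theta^{4L}$.

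There is no real obstacle here; the only mild point of care is to verify that the threshold $p_0$ for $\pbulk$ appearing in Claims \ref{claim:good_block_proba} and \ref{claim:good_edge_proba} can be chosen independently of $r$, $a$ and $L$ (which is indeed built into those claims, since they are stated uniformly in $L \geq L_0$ and work for any $L$-large-enough with a fixed threshold on $\pbulk$). The hypothesis $a \geq \frac{4\ln(r)}{\cmix l'}$ is exactly what is needed to invoke Claim \ref{claim:threshold_good_blocks}, and $r \geq 2$ guarantees that $r^{-L}$ decays genuinely, so $L_0$ can be selected to dominate all the $O((Ll')^2)$ prefactors in both error estimates simultaneously.
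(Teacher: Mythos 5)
Your proposal is correct and follows exactly the paper's own proof: combine Claim~\ref{claim:threshold_good_blocks} with Claim~\ref{claim:good_block_proba} for the site bound, Claim~\ref{claim:threshold_good_edges} with Claim~\ref{claim:good_edge_proba} for the edge bound, and take \(L\) large enough to absorb the error terms. The only difference is that you spell out the elementary asymptotics behind ``taking \(L\) large enough'', which the paper leaves implicit.
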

	\begin{proof}
		Using Claims~\ref{claim:good_block_proba}, and~\ref{claim:threshold_good_blocks}, one obtains that for any \(i\in \Gamma\),
		\begin{equation*}
			P_i^*(\good_i)S_i^* \geq 1-e^{-L_1} - 81\Cmix (Ll')^2r^{-2L}.
		\end{equation*}
		Using Claims~\ref{claim:good_edge_proba}, and~\ref{claim:threshold_good_edges}, one obtains that for any \(e\in E_{\Gamma}, k\in \{\pm 1,\dots \pm r^L\}\),
		\begin{equation*}
			P_{e,k}^*(\good_e^k)S_{e,k}^* \geq 1-(aL)^2e^{-2L} - 2a\Cmix (Ll')^2e^{-\cmix Ll'}.
		\end{equation*}Taking \(L\) large enough gives the wanted bounds.
	\end{proof}
	
	\subsection{Concluding the proof}
	
	The last section showed that
	\begin{equation}
		\label{eq:mixing_to_siteEdgePerco}
		\sup_{\xi,\xi'}\dTV\big(\nu_{\Lambda,\Lambda\setminus \Delta_2}^{t,\xi}|_{\Delta_1}, \nu_{\Lambda,\Lambda\setminus \Delta_2}^{t,\xi'}|_{\Delta_1}\big) \leq \rmP([\Delta_1] \text{ is not separated from } [\Delta_2] \text{ in } \tilde{\omega})
	\end{equation}
	where \(\tilde{\omega}\) is a site-edge Bernoulli percolation model on \((\Gamma,E_{\ell})\) with parameters given at the end of Section~\ref{sec:exploration_process}. Taking \(r\geq 2\theta^{-4}\), \(a\geq \frac{4\ln(r)}{\cmix l'}\) fixed, by Claim~\ref{claim:perco_labelled_bnds}, for any \(L\) large enough, this model stochastically dominates a Bernoulli site-edge percolation model \(\omega\) on \((\Gamma,E_{\Gamma})\) with parameters (\(i\in \Gamma\), \(e\in E_{\Gamma}\))
	\begin{gather*}
		q_i = \begin{cases}
			1-r^{-L} & \text{ if } i\in [\Lambda]_{\bulk}\\
			\theta^{(2L_1+1)^2} & \text{ if } i\in [\Lambda]_{\bnd}\\
			1 & \text{ if } i\in [\Lambda]_{\ext}
		\end{cases},
		\\
		q_e = \begin{cases}
			1-(1-\theta^{4L}/2)^{2r^L}\geq 1-e^{-2^L} & \text{ if } e\subset [\Lambda]_{\bulk}\\
			\theta^{(2a+2)L^2} & \text{ if } e=\{i,j\}, i\in [\Lambda], j\notin [\Lambda]_{\bulk}\\
			1 & \text{ else }
		\end{cases}.
	\end{gather*}
	by saying that an edge \(e\in E_{\Gamma}\) is closed only if \(\tilde{\omega}(e,k) = 0\) for all \(k\)'s. Now, by Lemma~\ref{lem:siteEdgePerco_to_sitePerco} (associate all the weight of non-bulk edges to boundary sites and half the weight of each bulk edge to the associated bulk sites), \(\omega\) stochastically dominates a site Bernoulli percolation on \((\Gamma,E_{\Gamma})\) with parameters 
	\begin{equation*}
		p_i = \begin{cases}
			(1-r^{-L})(1-e^{-2^L})^2 & \text{ if } i\in [\Lambda]_{\bulk}\\
			\theta^{(2L_1+1)^2 + 4(2a + 2)L^2} & \text{ if } i\in [\Lambda]_{\bnd}\\
			1 & \text{ if } i\in [\Lambda]_{\ext}
		\end{cases}.
	\end{equation*}Denote \(P_{\Lambda,\bar{p}}\) the law of this percolation model.
	Monotonicity and~\eqref{eq:mixing_to_siteEdgePerco} then give
	\begin{equation}
		\label{eq:mixing_to_sitePerco}
		\sup_{\xi,\xi'}\dTV\big(\nu_{\Lambda,\Lambda\setminus \Delta_2}^{t,\xi}|_{\Delta_1}, \nu_{\Lambda,\Lambda\setminus \Delta_2}^{t,\xi'}|_{\Delta_1}\big) \leq P_{\Lambda,\bar{p}}([\Delta_1] \text{ is not separated from } [\Delta_2] )
	\end{equation}which, up to one use of planar duality, is the statement of Theorem~\ref{thm:main} with \(\ell = K (2l+1) = L_1(2l+1) + (a+1)L(2l+1)\), \(p = 1- (1-r^{-L})(1-e^{-2^L})^2\), \(q = 1-\theta^{(2L_1+1)^2 + 4(2a + 2)L^2}\).
	
	\section{Application I: Finite range Gibbsian specifications}
	\label{sec:applications:FRGibbs}
	
	\subsection*{Mixing in Gibbsian specifications}
	
	The reader in need of more details about Gibbsian formalism is invited to have a look at~\cite[Chapter 2]{Georgii-2011}. Let \(\Omega_0\) be a finite set. For \(A\Subset \Z^2\), let \(\phi_A:\Omega_0^A\to \R\) be a collection of uniformly bounded potential functions. The Gibbs specification associated to the collection \((\phi_A)_{A\Subset \Z^2}\) is the collection of probability kernels \((\mu_{\Lambda}^{\xi})_{\Lambda\Subset \Z^2,\xi\in \Omega_0^{\Z^2}}\) given by, for any event \(A\) with compact support,
	\begin{equation*}
		\mu_{\Lambda}^{\xi}(A) = \sum_{\omega\in \Omega} \mathds{1}_{\omega_{\Lambda^c} = \xi_{\Lambda^c}} \mathds{1}_A(\omega) \frac{1}{Z_{\Lambda}^{\xi}}e^{\sum_{A: A\cap \Lambda\neq \varnothing}\phi_A(\omega) },
	\end{equation*}where \(Z_{\Lambda}^{\xi}\) is the normalization constant making \(\mu_{\Lambda}^{\xi}\) a probability measure. For these kernels to be defined, and in order to have that the one-site marginals have uniformly bounded density with respect to the counting measure on \(\Omega_0\), one requires the following finiteness condition:
	\begin{equation}
		\label{eq:summability_potential_site}
		\sup_{x\in \Z^2}\sum_{x\in A\Subset \Z^2} \norm{\phi_A}_{\infty} <\infty.
	\end{equation}Such a specification is called \emph{finite range} when there is \(R\geq 1\) such that \(\phi_A = 0\) for any \(A\) with diameter larger than \(R\).
	
	\emph{Exponential weak mixing} for the specification is the existence of \(C\geq 0, c>0\) such that for any \(\Delta \subset \Lambda\Subset \Z^2\), and \(\xi,\xi'\in \Omega\)
	\begin{equation}
		\label{eq:weak_mix_Gibbs}
		\dTV\big(\mu_{\Lambda}^{\xi}|_{\Delta}, \mu_{\Lambda}^{\xi'}|_{\Delta}\big) \leq C\sum_{x\in \Delta} \sum_{y\in \Lambda^c} e^{-c|x-y|}.
	\end{equation}
	
	\emph{Exponential strong mixing} for the specification is the existence of \(C\geq 0, c>0\) such that for any \(\Delta \subset \Lambda\Subset \Z^2\), and \(\xi,\xi'\in \Omega\)
	\begin{equation}
		\label{eq:strong_mix_Gibbs}
		\dTV\big(\mu_{\Lambda}^{\xi}|_{\Delta}, \mu_{\Lambda}^{\xi'}|_{\Delta}\big) \leq C\sum_{x\in \Delta} \sum_{y\in \Lambda^c} \mathds{1}_{\xi_y\neq \xi'_y}e^{-c|x-y|}.
	\end{equation}
	
	\subsection*{Weak mixing implies strong mixing}
	
	The main result of this section is a new proof of~\cite[Theorem 1.1]{Martinelli+Olivieri+Schonmann-1994}.
	\begin{theorem}[Martinelli-Olivieri-Schonmann, 1994]
		\label{thm:FRGibbs:weak_implies_strong}
		Suppose that the specification \((\mu_{\Lambda}^{\xi})_{\Lambda,\xi}\) has finite range \(R\) and is exponentially weak mixing. Then, there is \(L_0\geq 0\), and \(C\geq 0,c>0\) such that~\eqref{eq:strong_mix_Gibbs} holds whenever \(\Lambda = x+\{1,\dots,L\}^2\) with \(x\in \Z^2\), \(L\geq L_0\).
	\end{theorem}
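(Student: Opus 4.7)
\textbf{Verifying the hypotheses of Theorem~\ref{thm:main}.} Since $\Omega_0$ is finite and the potentials $(\phi_A)$ are uniformly bounded of finite range $R$, the specification is Markov at range $R$ and has uniform finite energy. I would take $l = R$ and $\MarkovSet_u = \Omega_{\Lambda_{l+l'}(u)}$ (the full spin space on each block), so that hypotheses~\ref{hyp:Markov:Decoupling_circuits}--\ref{hyp:Markov:local_Markov_finite_energy} hold trivially with $\pbulk = 1$ and some $\theta > 0$ coming from the uniform bound on $\phi$. The assumed exponential weak mixing~\eqref{eq:weak_mix_Gibbs} is precisely~\eqref{eq:weak_mix}, so Theorem~\ref{thm:weakMix_to_ratioWeakMix} upgrades it to~\ref{hyp:Mix:inf_vol_meas} and~\ref{hyp:Mix:exp_rel_density}. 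Theorem~\ref{thm:main} then applies, and I will choose its parameters so that the bulk percolation parameter $p$ is arbitrarily small. Finally, by finite range, $\mu_\Lambda^\xi$ depends on $\xi$ only through $\xi|_{\partial_R \Lambda}$ where $\partial_R \Lambda := \{z \in \Lambda^c : \rmd_2(z, \Lambda) \le R\}$, so I may assume without loss of generality that $\xi_z = \xi'_z$ for every $z \notin \partial_R \Lambda$.

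\textbf{Interpolation and single-site DLR step.} Enumerate the disagreements $\{z_1,\ldots,z_m\} := \{z \in \Lambda^c : \xi_z \ne \xi'_z\} \subseteq \partial_R \Lambda$, and let $\xi^{(k)}$ be the configuration equal to $\xi'$ on $\{z_1,\ldots,z_k\}$ and to $\xi$ elsewhere, so $\xi^{(0)} = \xi$ and $\xi^{(m)} = \xi'$. The triangle inequality yields
$$\dTV\big(\mu_\Lambda^\xi|_\Delta,\mu_\Lambda^{\xi'}|_\Delta\big) \le \sum_{k=1}^{m} \dTV\big(\mu_\Lambda^{\xi^{(k-1)}}|_\Delta,\mu_\Lambda^{\xi^{(k)}}|_\Delta\big).$$
For each $k$, $\xi^{(k-1)}$ and $\xi^{(k)}$ agree on $(\Lambda \cup \{z_k\})^c$; call this common restriction $\tilde\eta$. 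The DLR equation then gives
$$\mu_\Lambda^{\xi^{(k)}} = \mu_{\Lambda\cup\{z_k\}}^{\tilde\eta}\big(\,\cdot\,|_{\Lambda} \bgiven \sigma_{z_k}=\xi^{(k)}_{z_k}\big),$$
and analogously for $\xi^{(k-1)}$. Applying Theorem~\ref{thm:main} to the volume $\Lambda\cup\{z_k\}$ with $t = \tilde\eta$, $\Delta_1 = \Delta$, $\Delta_2 = \{z_k\}$ (a single site, so the conditioning values lie in $\Omega_0$), one obtains
$$\dTV\big(\mu_\Lambda^{\xi^{(k-1)}}|_\Delta, \mu_\Lambda^{\xi^{(k)}}|_\Delta\big) \le P_{\Lambda\cup\{z_k\};\ell,\bar p}\big(\{z_k\}\leftrightarrow_* \Delta\big).$$

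\textbf{Percolation decay, conclusion, and main obstacle.} It remains to show $P_{\Lambda\cup\{z_k\};\ell,\bar p}(\{z_k\}\leftrightarrow_* \Delta) \le C \sum_{y\in\Delta} e^{-c|y-z_k|}$ with $C,c>0$ uniform in $L \ge L_0$, $\Lambda$, and $z_k$. For $\Lambda = x + \{1,\ldots,L\}^2$ and $z_k \in \partial_R \Lambda$, the set $\Lambda\cup\{z_k\}$ is a large square with a single attached vertex; in the coarse-grained percolation its ``boundary layer'' of blocks with $p_i = q$ is a one-dimensional strip of $O(L)$ blocks around the square, the unique block containing $z_k$ has $p_i = 1$ and sits on this strip, and every interior block has $p_i = p$ which can be taken as small as desired. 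Any $*$-connection from $\{z_k\}$ to $y \in \Delta$ must either traverse the strongly subcritical bulk directly (cost exponential in $|y-z_k|$) or hug the 1D strip over a distance $d$ (cost $q^{\Omega(d)}$, since 1D site percolation has exponential decay for every $q<1$) before branching inward. Summing the resulting exponential bound over $k$ rearranges to $C\sum_{y\in\Delta}\sum_{z\in\Lambda^c}\mathds{1}_{\xi_z\ne\xi'_z}e^{-c|y-z|}$, which is~\eqref{eq:strong_mix_Gibbs}. The hard part is exactly this last percolation estimate: although $p$ is arbitrarily small, the strip parameter $q = 1 - \theta^{O(L_1^2)}$ coming from Theorem~\ref{thm:main} is very close to $1$, so one must separate the ``1D boundary cost'' from the ``2D bulk cost'' and show any path ultimately pays the bulk cost proportional to the Euclidean distance to $\Delta$; this is exactly the geometric analysis of 2D inhomogeneous subcritical percolation carried out in~\cite[Section 2]{Alexander-2004}, which I would adapt to the present model.
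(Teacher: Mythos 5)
Your proposal follows essentially the same route as the paper: verify the Markov-type hypotheses trivially from finite range and finite energy (with \(\pbulk=1\)), upgrade weak mixing to~\ref{hyp:Mix:inf_vol_meas}--\ref{hyp:Mix:exp_rel_density} via Theorem~\ref{thm:weakMix_to_ratioWeakMix}, reduce to boundary conditions differing at a single site, apply Theorem~\ref{thm:main} with small bulk parameter, and defer the exponential-decay estimate for the inhomogeneous percolation to Alexander-type arguments --- which is exactly what the paper does (it calls that last step ``an exercise'', pointing to Remark~\ref{rem:general_inhomogeneities}). Your explicit DLR/telescoping step turning the single-site boundary change into an internal conditioning on an enlarged volume is left implicit in the paper but is the intended reading; only note that \(\Lambda\cup\{z_k\}\) should be replaced by a connected enlargement when \(R\geq 2\).
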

	\begin{proof}
		First note that it is sufficient to prove~\eqref{eq:strong_mix_Gibbs} for \(\xi,\xi'\) equal everywhere but at one site \(y\).
		
		The set \((\mu_{\Lambda}^{\xi})_{\Lambda\Subset \Z^2, \xi\in \Omega_0^{\Z^2}}\) is a particular case of the framework of Section~\ref{sec:models}. Take then \(l > R\), the range of the potential. Then, the model satisfies~\ref{hyp:Markov:Decoupling_circuits} with \(\MarkovSet_{u} = \Omega_{0}^{\Lambda_{3l+1}(u)}\), \(\MarkovSet_{u,\Lambda}^{\xi} = \Omega_{0}^{\Lambda_{3l+1}\cap \Lambda}\),~\ref{hyp:Markov:local_Markov_bulk} with \(\pbulk = 1\), and~\ref{hyp:Markov:local_Markov_finite_energy} with \(\MarkovEl_u,\MarkovEl_{u,\Lambda}^{\xi}\) any elements of the sets \(\MarkovSet_u,\MarkovSet_{u,\Lambda}^{\xi}\).
		
		Now, \((\mu_{\Lambda}^{\xi})_{\Lambda,\xi}\) are assumed to be weak mixing. So, by Theorem~\ref{thm:weakMix_to_ratioWeakMix}, they satisfy Hypotheses~\ref{hyp:Mix:inf_vol_meas}, and~\ref{hyp:Mix:exp_rel_density}. Let then \(\ell,q\) be the length and percolation parameter guaranteed by Theorem~\ref{thm:main} for \(p= 10^{-10}\). Then, for \(L\) large enough compared to \(\ell\) (depending on \(q\)), it is an exercise to show that the percolation models \(P_{\Lambda;\ell,\bar{p}}\) have exponential decay of connectivity uniformly over \(\Lambda\) a large enough square (see remark~\ref{rem:general_inhomogeneities} and Lemma~\ref{lem:exp_dec_inhomo_bnd}).
	\end{proof}
	
	\begin{remark}
		\label{rem:general_inhomogeneities}
		The statement is in fact more general: Theorem~\ref{thm:main} compares mixing with connections in a (very) subcritical Bernoulli percolation with inhomogeneities. A sufficient condition for these model to have exponential decay should be that the inhomogeneity is "essentially one-dimensional" below a certain scale (e.g.: it is (well approximated by) a union of straight lines with large distances between intersection points, and lower bounded angles between lines). Obtaining optimal conditions under which such inhomogeneous site/block percolation has exponential decay seems like an interesting open problem. Handling ``macroscopically one dimensional'' inhomogeneities can be done as in~\cite[Lemma 3.2]{Ott+Velenik-2018}. An argument which applies in the large squares case of Theorem~\ref{thm:FRGibbs:weak_implies_strong} is given in Lemma~\ref{lem:exp_dec_inhomo_bnd}.
	\end{remark}
	
	\section{Application II: FK percolation}
	\label{sec:applications:FK}
	
	\subsection*{FK percolation}
	
	Denote \(\bbE_{R}=\big\{\{i,j\}\subset \Z^2:\ \normsup{i-j}\leq R\big\}\) the edges of length less than \(R\). Let \(J_{ij}=J_{ji} =J(i-j)\geq 0\) be such that \(J(x)=0\) when \(\normsup{x}> R\). Let \(q>0\). For \(\Lambda\subset \Z^2\), let \(\bbE_{\Lambda,R} = \{e\in \bbE_R:\ e\cap \Lambda\neq \varnothing \}\). For \(\pi\) a partition of \(\Lambda^c\), define the probability measure \(\Phi_{\Lambda}^{\pi}\) on \(\bbE_{\Lambda,R}\) via
	\begin{equation*}
		\Phi_{\Lambda}^{\pi}(\omega)\propto q^{\kappa_{\pi}(\omega)}\prod_{\{i,j\}\in \omega} (e^{J_{ij}}-1),
	\end{equation*}where \(\kappa_{\pi}(\omega)\) is the number of classes in the partition of \(\Lambda\) obtained by putting in the same class sites that are connected in the graph obtained from \((\Z^2,\omega)\) by identifying vertices in \(\Lambda^c\) that belong to the same class of \(\pi\). Write \(1,0\) for the partitions obtained corresponding, respectively, to the set of connected components of \(\Lambda^c\), and to \(\{\{i\}:\ i\in \Lambda^c\}\).
	See~\cite{Grimmett-2006} for all the properties of FK percolation.
	
	Say that the model is \emph{strongly subcritical} if
	\begin{equation}
		\label{eq:subCrit_FK}
		\lim_{N\to\infty} \sup_{\pi,x}\Phi_{\Lambda_{3N}(x)}^{\pi}\big(\Lambda_N(x)\leftrightarrow \Lambda_{2N}^c(x)\big) = 0.
	\end{equation}
	Denote \(A_{n,m}(x)\) the event that there is at least one cluster in \(\Lambda_m(x)\setminus \Lambda_{n}(x)\) such that this cluster ``surrounds'' \(\Lambda_n(x)\) (it contains a path of open edges such that if the line segments corresponding to its edges are drawn in the plane, the obtained set is included in \(x + ([-m,m]^2\setminus (-n,n)^2)\), and contains a simple path surrounding \((-n,n)^2\)). Denote \(B_n(x)\) the event that there is at most one cluster of euclidean diameter \(\geq \frac{n}{10}\) in \(\Lambda_n(x)\). Say that the model is \emph{strongly supercritical} if 
	\begin{equation}
		\label{eq:superCrit_FK}
		\lim_{N\to\infty} \inf_{\pi,x}\Phi_{\Lambda_{3N}(x)}^{\pi}\big( A_{N,2N}(x)\cap B_{2N}(x) \big) = 1.
	\end{equation}Note that in both cases, the \(\sup,\inf\) over \(x\) is not necessary as the model is translation covariant (\(\Phi_{\Lambda_{3N}(x)}^{\pi}\) is the translation of \(\Phi_{\Lambda_{3N}}^{\pi'}\), with \(\pi'\) the partition obtained by translating \(\pi\)).
	
	\subsection*{FK percolation as a model on sites}

	One can see FK percolation with range \(R\) as a model on sites in multiple ways. Two are presented here.
	
	\vspace*{3pt}
	
	\noindent\textbf{First method}
	
	\vspace*{3pt}
	
	Let \(\psi:\bbE_R\to \Z^2\) be a function that associated to each each one of its endpoints in a translation invariant way (\(\psi(x+e) = x+\psi(e)\)).
	
	Take then the spin space to be
	\begin{equation*}
		\Omega_x = \{0,1\}^{\psi^{-1}(\{x\})}.
	\end{equation*}\(\psi\) induces a natural bijection between this state space and the one of FK percolation, so the family of measures is simply the push-forward by this bijection.
	
	\vspace*{3pt}
	
	\noindent\textbf{Second method}
	
	\vspace*{3pt}
	
	Take single-site spin space to be
	\begin{equation*}
		\Omega_{x} = \{0,1\}^{\{y:\ \normsup{x-y} \leq R\}}.
	\end{equation*}From \(\eta\in \Omega_{\Lambda}\), define \(\omega\in \{0,1\}^{\bbE_{\Lambda,R}}\) via
	\begin{equation*}
		\omega_{e}(\eta) = \begin{cases}
			\eta_i(j)\eta_{j}(i) & \text{ if } e=\{i,j\}\subset \Lambda,\\
			\eta_i(j) & \text{ if } e=\{i,j\},\, i\in \Lambda,\, j\in \Lambda^c.
		\end{cases}
	\end{equation*}

	Define then the measure \(\Psi_{\Lambda}^{\pi}\) on \(\Omega_{\Lambda}\) via
	\begin{multline*}
		\Psi_{\Lambda}^{\pi}(\eta) = q^{\kappa_{\pi}(\eta) }\prod_{i\in \Lambda}\Big(\prod_{j\in \Lambda: \eta_{i}(j) = 1}(e^{J_{ij}}-1)^{1/2}\prod_{j\in \Lambda: \eta_{i}(j) = 0}\big(e^{J_{ij}/2}-(e^{J_{ij}}-1)^{1/2}\big)\Big)\\
		\times \prod_{i\in \Lambda}\prod_{j\in \Lambda^c: \eta_i(j) = 1}(e^{J_{ij}}-1),
	\end{multline*}where \(\kappa_{\pi}(\eta) = \kappa_{\pi}(\omega(\eta))\). It is then a direct computation to check that if \(\eta\sim \Psi_{\Lambda}^{\pi}\), then \(\omega(\eta)\sim \Phi_{\Lambda}^{\pi}\).

	\subsection*{Weak mixing implies strong mixing}
	
	The second application recovers the strong mixing result of~\cite{Alexander-2004} for large squares. It is easy to extend the result for more complicated shapes, but the goal is simply to illustrate how to use Theorem~\ref{thm:main} in the context of percolation models. Note that whilst the results of~\cite{Alexander-2004} are restricted to planar FK-percolation, the present method only assumes finite range.
	
	\begin{theorem}[Strong mixing for squares]
		Suppose that the measures \(\Phi_{\Lambda}^{\pi}\), \(\Lambda\Subset\Z^2\), \(\pi\in \mathrm{Partitions}(\Lambda^c)\), are weak mixing (satisfy~\eqref{eq:weak_mix_Gibbs} with \(\Phi_{\Lambda}^{\pi}\) replacing \(\mu_{\Lambda}^{\xi}\)). Suppose moreover that the model is either strongly supercritical (satisfies~\eqref{eq:superCrit_FK}) or strongly subcritical (satisfies~\eqref{eq:subCrit_FK}). Then, there are \(C\geq 0,c>0,L_0\geq 1\) such that for any \(\Lambda\) translate of \( \{1,\dots, L\}^2\), \(L\geq L_0\), any \(\Delta_1,\Delta_2\subset \bbE_{\Lambda,R}\), and \(\pi \in \{0,1\}\), one has
		\begin{equation*}
			\sup_{\xi,\xi'\in \{0,1\}^{\Delta_1}} \dTV\Big(\Phi_{\Lambda}^{\pi}\big(\omega_{\Delta_2}\in \cdot \given \omega_{\Delta_1} = \xi \big), \Phi_{\Lambda}^{\pi}\big(\omega_{\Delta_2}\in \cdot \given \omega_{\Delta_1} = \xi' \big)\Big)\leq C\sum_{e\in \Delta_1,f\in \Delta_2}e^{-c\rmd(e,f)}.
		\end{equation*}
	\end{theorem}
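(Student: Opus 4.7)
The strategy is to cast the FK model in the framework of Section~\ref{sec:models} (using, say, the second site representation $\Psi_\Lambda^\pi$ so that the non-local weight $q^{\kappa_\pi}$ becomes a functional of the sites), choose the ``Markov length scale'' $l$ at least the range $R$, and then verify Hypotheses~\ref{hyp:Markov:Decoupling_circuits}--\ref{hyp:Markov:local_Markov_finite_energy}. Together with the assumed weak mixing~\eqref{eq:weak_mix_Gibbs} for $\Phi_\Lambda^\pi$, Theorem~\ref{thm:weakMix_to_ratioWeakMix} then yields~\ref{hyp:Mix:inf_vol_meas} and~\ref{hyp:Mix:exp_rel_density}, and Theorem~\ref{thm:main} gives the domination by an inhomogeneous site percolation $P_{\Lambda;\ell,\bar p}$, from which the desired bound for squares follows by a standard exponential-decay argument on the (one-dimensional) boundary inhomogeneity.

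The central step is designing the decoupling events $\mathrm{Ma}_u$ and the reference configurations $\mathrm{a}_u$. In the \emph{strongly subcritical} regime I would let $\mathrm{Ma}_u$ be the event that every edge with an endpoint in some annulus of width $\geq R$ contained in $\Lambda_{l+l'}(u)\setminus \Lambda_l(u)$ is closed (an ``empty shell''). Such a configuration forbids any open edge to cross the shell, so under $\Psi_\Lambda^\pi$ the $q^{\kappa_\pi}$ weight factorises across the shell and the event is genuinely a decoupling event in the sense of~\ref{hyp:Markov:Decoupling_circuits}. The probability lower bound~\ref{hyp:Markov:local_Markov_bulk} then follows from~\eqref{eq:subCrit_FK} (enlarging $l$) and a standard FKG / union argument that turns one-arm decay into a circuit of closed edges. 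In the \emph{strongly supercritical} regime I would instead take $\mathrm{Ma}_u$ to be the event that $\Lambda_{l+l'}(u)\setminus \Lambda_l(u)$ contains an open circuit surrounding $\Lambda_l(u)$ and no other cluster of diameter larger than, say, $l/10$ in that annulus; this is precisely the hypothesis~\eqref{eq:superCrit_FK} (applied on a scale $\asymp l$), and it forces every vertex of the inside/outside that touches the annulus to belong to the unique big cluster, so the partition induced on the annulus is trivial and decoupling holds. Finite energy~\ref{hyp:Markov:local_Markov_finite_energy} is immediate: take $\mathrm{a}_u$ to be the ``all closed'' spin configuration in the subcritical case and the ``all open'' configuration (restricted to the annulus edges, closed elsewhere) in the supercritical case; boundedness of the potentials $J_{ij}$ gives the uniform positive probability $\theta>0$, and the monotonicity property required by (a)--(b) is straightforward because modifying a block to $\mathrm{a}_v$ only \emph{reinforces} the circuit.

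With the hypotheses in place, Theorem~\ref{thm:main} yields, for any target $p$ (to be taken very small), some $\ell$ and $q\in(0,1)$ such that
\begin{equation*}
  \dTV\bigl(\Psi_\Lambda^\pi(\cdot \given \omega_{\Delta_1}=\xi),\Psi_\Lambda^\pi(\cdot \given \omega_{\Delta_1}=\xi')\bigr)
  \leq P_{\Lambda;\ell,\bar p}\bigl([\Delta_2]\leftrightarrow_* [\Delta_1]\bigr),
\end{equation*}
where the parameters $\bar p$ are $p$ deep in $\Lambda$ and $q$ only on blocks within distance $\ell+\ell'$ of $\Lambda^c$. I then translate back via the bijection between $\omega$ and $\eta$ (the blocks carry at most finitely many edges, so this only changes $C,c$).

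The final step is to show exponential decay of $P_{\Lambda;\ell,\bar p}\bigl([\Delta_2]\leftrightarrow_* [\Delta_1]\bigr)$ in $\rmd(\Delta_1,\Delta_2)$ uniformly over $\Lambda$ being a large square. Choose $p$ small enough that the bulk $*$-connectivity is in the very subcritical regime of Bernoulli percolation (Peierls suffices). Any $*$-connection from $\Delta_1$ to $\Delta_2$ either stays in the bulk (exponentially unlikely with rate of order $|\log p|$) or enters the boundary layer, which for a square is a union of four straight one-dimensional strips; decay along a one-dimensional strip of inhomogeneous parameter $q$ is immediate, and switches between bulk and boundary cost a bulk segment of length of order the perpendicular distance. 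Concretely I would run the argument of~\cite[Lemma 3.2]{Ott+Velenik-2018} mentioned in Remark~\ref{rem:general_inhomogeneities}, which is tailored to this ``one-dimensional boundary'' situation.

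The main obstacle is the supercritical case: one must define $\mathrm{Ma}_u$ precisely enough that (i) the event is monotone in the appropriate sense for part~(a) of~\ref{hyp:Markov:local_Markov_finite_energy}, (ii) it is genuinely a \emph{decoupling} event when a chain of such blocks forms a circuit surrounding $\Delta_2$, i.e.\ conditioning on the configuration on the annuli really makes inside and outside independent despite the non-local factor $q^{\kappa_\pi}$, and (iii) the probability is close to $1$ uniformly in boundary conditions and in the (possibly exotic) partitions $\pi$ that arise once one quotients out by the circuit. Point (ii) is the subtle one and is where uniqueness of the big cluster in the annulus (the $B_{2N}$ part of~\eqref{eq:superCrit_FK}) enters crucially, since it guarantees that all external connections through the annulus merge into a single equivalence class.
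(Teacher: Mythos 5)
Your overall architecture --- pass to the site representation \(\Psi_\Lambda^\pi\), verify the Markov-type hypotheses separately in the two regimes, feed weak mixing into Theorem~\ref{thm:weakMix_to_ratioWeakMix} to obtain~\ref{hyp:Mix:inf_vol_meas} and~\ref{hyp:Mix:exp_rel_density}, apply Theorem~\ref{thm:main}, and conclude via exponential decay of the inhomogeneous percolation on large squares --- is exactly the paper's, and your supercritical decoupling event (open circuit in the annulus plus uniqueness of the macroscopic cluster) essentially coincides with the paper's choice.

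The gap is your subcritical decoupling event. Taking \(\MarkovSet_u\) to be ``some annulus of width \(\geq R\) inside \(\Lambda_{l+l'}(u)\setminus\Lambda_l(u)\) has all incident edges closed'' cannot satisfy~\ref{hyp:Markov:local_Markov_bulk} with \(\pbulk\) close to \(1\). Strong subcriticality~\eqref{eq:subCrit_FK} controls the \emph{size of clusters}, not the \emph{density of open edges}: already for \(q=1\) (independent edges, each open with some fixed \(p>0\)) a width-\(R\) annulus around \(\Lambda_l(u)\) carries order \(l\) edges, so any given annulus is empty with probability \(e^{-cl}\), and a union bound over the \(O(l)\) candidate annuli gives \(P(\MarkovSet_u)\leq C\,l\,e^{-cl}\to 0\); enlarging \(l\) makes this worse, not better. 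The ``FKG/union argument turning one-arm decay into a circuit of closed edges'' produces a blocking \emph{cutset} of closed edges --- which is nothing but the complement of the crossing event --- not a literally empty shell of width \(R\). The correct choice (the paper's) is to take \(\MarkovSet_u\) to be the event that there is \emph{no open connection} from \(\Lambda_l(u)\) to \(\Lambda_{3l/2}(u)^{\comp}\) inside \(\Lambda_{3l+1}(u)\): its probability tends to \(1\) directly from~\eqref{eq:subCrit_FK} as \(l\) grows, it is preserved when neighbouring blocks are reset to the all-closed configuration \(\MarkovEl_v\) (so part (a) of~\ref{hyp:Markov:local_Markov_finite_energy} holds), and it is still a genuine decoupling event in the sense of~\ref{hyp:Markov:Decoupling_circuits}, because the non-local weight \(q^{\kappa_\pi}\) factorises across the conditioned annular region as soon as no open cluster traverses it --- literal emptiness is not needed. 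With that substitution your argument goes through as in the paper.
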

	\begin{proof}
		One checks the Hypotheses~\ref{hyp:Mix:inf_vol_meas},~\ref{hyp:Mix:exp_rel_density},~\ref{hyp:Markov:Decoupling_circuits},~\ref{hyp:Markov:local_Markov_bulk},~\ref{hyp:Markov:local_Markov_finite_energy} separately in the strongly subcritical and strongly supercritical cases. Then, application of Theorem~\ref{thm:main} to the collection of measure \(\Psi_{\Lambda}^{\pi}\) with \(\pi\in \{0,1\}\) defined in the previous subsection gives the result as in the proof of Theorem~\ref{thm:FRGibbs:weak_implies_strong}. Let \(l> 4R\) (\(R\) the range of the percolation model) be a large enough number.
		
		\vspace*{3pt}
		
		\noindent\textbf{Strongly subcritical case}
		
		\vspace*{3pt}
		
		Following Remark~\ref{rem:MarkovEl_bnd_implies_MarSet_bnd}, we only need to define the sets \(\MarkovSet_u\), and the spin values \(\MarkovEl_{u},\MarkovEl_{u,\Lambda}^t\), \(u\in \bbL_l\). Let \(\MarkovSet_u\) be the event that the configuration \(\eta_{\Lambda_{3l+1}(u)}\) has no connections from \(\Lambda_{l}(u)\) to \(\Lambda_{3l/2}(u)^c\). The elements \(\MarkovEl_{u},\MarkovEl_{u,\Lambda}^t\) are then constant \(0\) configurations. Finite energy gives the wanted properties of \(\MarkovEl_{u},\MarkovEl_{u,\Lambda}^t\), and strong sub-criticality gives the wanted large probability of \(\MarkovSet_u\) once \(l\) is taken large enough.
		
		Hypotheses~\ref{hyp:Mix:inf_vol_meas},~\ref{hyp:Mix:exp_rel_density} are then a direct consequence of Theorem~\ref{thm:weakMix_to_ratioWeakMix} and weak mixing.
		
		\vspace*{3pt}
		
		\noindent\textbf{Strongly supercritical case}
		
		\vspace*{3pt}
		
		The procedure is the same (using strong super-criticality rather than strong sub-criticality) with the following elements and sets: \(\MarkovEl_{u},\MarkovEl_{u,\Lambda}^t\) are the constant \(1\) configurations, and \(\MarkovSet_u\) is the event that there is a path surrounding \(\Lambda_l(u)\) in \(\Lambda_{3l/2}(u)\setminus \Lambda_{l}(u)\), and there is exactly one cluster of diameter greater than \(l/10\) in \(\Lambda_{3l+1}(u)\).
	\end{proof}
	
	\section{Application III: Hard core models}
	\label{sec:applications:HC}
	
	This application seems to be new. As in Section~\ref{sec:applications:FRGibbs}, it is a straightforward consequence of Theorem~\ref{thm:main}, and is almost the same as the application to finite range Gibbsian specifications of Section~\ref{sec:applications:FRGibbs}. So the application is only sketched.
	
	Let \(D\ni 0\) be a fixed connected finite set. A \emph{set of configurations} of the \(D\)-hard core model is
	\begin{equation}
		\Omega^{\HardCore} = \big\{ \eta\in \{0,1\}^{\Z^2}:\ i\neq j \text{ and }\eta_i=\eta_j = 1 \ \implies \ (i+D)\cap (j+D)=\varnothing\big\}.
	\end{equation}
	
	Let \(\mu\in \R\). The specification is given by, for \(\Lambda\Subset \Z^2\), \(\xi\in \Omega^{\HardCore}\),
	\begin{equation*}
		\mu_{\Lambda;\mu}^{\xi}(\eta) = \mathds{1}_{\xi_{\Lambda^c} = \eta_{\Lambda^c}}e^{-\mu \sum_{i\in \Lambda} \eta_i} \prod_{\substack{i \in \Lambda,\, j\in \Z^2\setminus i\\ \eta_j = \eta_i=1}} \mathds{1}_{(i+D)\cap (j+D) =\varnothing}.
	\end{equation*}
	
	As in Section~\ref{sec:applications:FRGibbs}, take \(l\) strictly bigger than the diameter of \(D\). Then,~\ref{hyp:Markov:Decoupling_circuits},~\ref{hyp:Markov:local_Markov_bulk} hold with \(\pbulk = 1\). The only change with respect to Section~\ref{sec:applications:FRGibbs} is that one takes \(\MarkovEl_u = 0\) (constant \(0\) configuration on \(\Lambda_{l}(u)\)) and \(\MarkovEl_{u,\Lambda}^{\xi} = 0\) (constant \(0\) configuration on \(\Lambda_{l}(u)\cap \Lambda\)). Following the same procedure as in Section~\ref{sec:applications:FRGibbs}, one obtains that weak mixing (satisfying~\eqref{eq:weak_mix_Gibbs}) implies strong mixing (satisfying~\eqref{eq:strong_mix_Gibbs}) for volumes that are large enough squares.
	
	\appendix
	
	\section{Bernoulli percolation}
	
	The results/proofs presented here are probably exercises (or even trivialities!) for percolation experts. They are included for completeness.
	
	\subsection{Comparison between edge-site and site percolation}
	
	Let \(G=(V,E)\) be a graph with finite degree. Denote \(i\sim j\) the condition \(\{i,j\}\in E\). For each \(i\in V\), let \(p_i\in [0,1]\), and for each \(\{i,j\}\in E\), let \(p_{ij} \in [0,1]\). Let \(X_{i},i\in V\), \(X_{ij}, \{i,j\}\in E\) be an independent family of Bernoulli random variables with \(P(X_*=1) = p_*\). Consider the random graph
	\begin{equation*}
		G(X) = \big(\{i\in V:\ X_i=1\},\{\{i,j\}\in E:\ X_iX_jX_{ij} =1\}\big).
	\end{equation*}
	
	\begin{lemma}
		\label{lem:siteEdgePerco_to_sitePerco}
		For each \(\{i,j\}\in E\), let \(\lambda_{i,j},\lambda_{j,i}\in [0,1]\) with \(\lambda_{i,j}+\lambda_{j,i} = 1\). Let \(q_i = p_i\prod_{j\sim i} p_{ij}^{\lambda_{i,j}} \) and let \(Y_i\sim \mathrm{Bern}(q_i), i\in V\). Then, the graph
		\begin{equation*}
			H(Y) = \big(\{i\in V:\ X_i = 1 \}, \{\{i,j\}\in E:\ X_iX_j =1 \}\big)
		\end{equation*}is stochastically dominated by \(G(X)\) (for graph inclusion).
	\end{lemma}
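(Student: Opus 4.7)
The plan is to construct an explicit coupling which realizes $H(Y)$ as a subgraph of $G(X)$ almost surely. The mechanism is a \emph{splitting of each edge-variable into two half-edge contributions}, one assigned to each endpoint, in proportions governed by the $\lambda_{i,j}$'s. Since $p_{ij} = p_{ij}^{\lambda_{i,j}}\cdot p_{ij}^{\lambda_{j,i}}$, a single Bernoulli($p_{ij}$) variable factors as a product of two independent Bernoullis with these parameters.

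Concretely, I would proceed as follows. For every edge $\{i,j\}\in E$, introduce two independent Bernoulli variables $X_{ij}^{i}\sim \mathrm{Bern}(p_{ij}^{\lambda_{i,j}})$ and $X_{ij}^{j}\sim \mathrm{Bern}(p_{ij}^{\lambda_{j,i}})$, all mutually independent and independent of the $X_i$'s, and put $X_{ij}:= X_{ij}^{i}\cdot X_{ij}^{j}$. Then $X_{ij}\sim \mathrm{Bern}(p_{ij})$ and the whole family $(X_i,X_{ij})$ has the prescribed joint law, so $G(X)$ is constructed. Next define
\begin{equation*}
	Y_i \; := \; X_i \prod_{j\sim i} X_{ij}^{i}, \qquad i\in V.
\end{equation*}
Each $Y_i$ is a product of independent Bernoullis, hence Bernoulli with parameter $p_i \prod_{j\sim i} p_{ij}^{\lambda_{i,j}} = q_i$. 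Independence across $i$ follows because distinct $Y_i$, $Y_{i'}$ involve disjoint sets of half-edge variables: the half $X_{ii'}^{i}$ appearing in $Y_i$ is, by construction, independent of the half $X_{ii'}^{i'}$ appearing in $Y_{i'}$.

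It then remains to verify the deterministic inclusion $H(Y)\subset G(X)$ on this coupled space. If $Y_i=1$ then all factors equal $1$, in particular $X_i=1$, so the vertex set of $H(Y)$ is contained in that of $G(X)$. If moreover $\{i,j\}\in E$ belongs to $H(Y)$, then $Y_iY_j = 1$ forces $X_i = X_j = X_{ij}^{i} = X_{ij}^{j} = 1$, hence $X_{ij}=1$ and $X_iX_jX_{ij}=1$, so $\{i,j\}$ belongs to the edge set of $G(X)$. This pointwise domination immediately yields stochastic domination for any monotone (with respect to graph inclusion) functional, which is the claim.

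I do not anticipate a real obstacle: the entire content is the Bernoulli factorization $\mathrm{Bern}(ab)\stackrel{\mathrm{law}}{=}\mathrm{Bern}(a)\cdot \mathrm{Bern}(b)$ applied edgewise. The only point that needs a little care is checking the independence of the $Y_i$'s, which relies on the fact that each half-edge random variable $X_{ij}^{i}$ is used by exactly one vertex-variable $Y_i$.
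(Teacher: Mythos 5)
Your proposal is correct and is essentially identical to the paper's proof: your ``half-edge'' variables $X_{ij}^{i}$ are exactly the paper's oriented-edge variables $Z_{(i,j)}\sim\mathrm{Bern}(p_{ij}^{\lambda_{i,j}})$, and both arguments realize $G(X)$ and $H(Y)$ on the same probability space via the factorization $\mathrm{Bern}(ab)\lawEq\mathrm{Bern}(a)\cdot\mathrm{Bern}(b)$ and then check the deterministic inclusion together with the independence of the vertex products. No gap.
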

	\begin{proof}
		For each oriented edge \((i,j)\) with \(\{i,j\}\in E\), let \(\tilde{p}_{ij} = p_{ij}^{\lambda_{i,j}}\). Let also \(\tilde{p}_i=p_i\) for \(i\in V\). Let \((Z_a)_{a\in V\cup \{(i,j):\ \{i,j\}\in E\}}\) be an independent family with \(X_a\sim \mathrm{Bern}(\tilde{p}_a)\). Then, \(G(X)\) has the same law as \(\tilde{G}(Z) = \big(\{i\in V:\ Z_i = 1 \}, \{\{i,j\}\in E:\ Z_{(i,j)}Z_{(j,i)} = 1\}\big)\). Now, consider the sub-graph \(\tilde{H}(Z)= (W(Z),F(Z))\) of \(\tilde{G}(Z)\) with
		\begin{equation*}
			W(Z) = \Big\{i\in V:\ Z_i\prod_{j\sim i} Z_{(i,j)} = 1\Big\},\quad F(Z)=\big\{\{i,j\}\in E:\ \{i,j\}\subset W(Z) \big\}.
		\end{equation*}By choice of the parameters, \(Z_i\prod_{j\sim i} Z_{(i,j)}\) has the same law as \(Y_i\) for every \(i\in V\). Moreover, the family \(\big(Z_i\prod_{j\sim i} Z_{(i,j)}\big)_{i\in V}\) is an independent family. In particular, the graph \(\tilde{H}(Z)\) has the same law as \(H(Y)\), proving the claim.
	\end{proof}
	
	\subsection{Percolation estimates}
	
	\begin{lemma}
		\label{app:perco:setCluster_to_siteCluster}
		Let \(G=(V,E)\) be a connected graph. Let \(P_p\) be the site Bernoulli percolation measure of parameter \(p\) on \(G\). Let \(X\sim P_p\). Then, for any \(\lambda\geq $  $0\), and any \(A\subset V\),
		\begin{equation*}
			E_p\big(e^{\lambda|C_A(X)|}\big) \leq \prod_{i\in A} E_p\big(e^{\lambda|C_i(X)|}\big)
		\end{equation*}where \(C_A(X)=\cup_{i\in A} C_i(X)\), and \(C_i(X)\) is the cluster of \(i\) in \(X\) (if \(X_i = 0\), \(C_i(X)=\varnothing\)).
	\end{lemma}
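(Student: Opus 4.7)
The natural attempt via the Harris/FKG inequality goes in the wrong direction: one has $|C_A(X)|\le \sum_{i\in A}|C_i(X)|$, hence $e^{\lambda|C_A(X)|}\le \prod_{i\in A} e^{\lambda|C_i(X)|}$, but FKG applied to the increasing functions $e^{\lambda|C_i(X)|}$ (indeed increasing in $X$, since opening more sites only enlarges clusters) bounds $E_p\bigl(\prod_i e^{\lambda|C_i|}\bigr)$ from \emph{below}, not above, by $\prod_i E_p(e^{\lambda|C_i|})$. To get the right direction one must exploit the disjointness of distinct clusters. The plan is to induct on $|A|$, exploring one new cluster at a time and comparing it to its full-graph counterpart via subgraph monotonicity.

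Fix an enumeration $A=\{i_1,\ldots,i_n\}$ and let $A'=\{i_1,\ldots,i_{n-1}\}$; the case $n=0$ is trivial. Run a standard cluster exploration of $C_{A'}(X)$, revealing $X$ on the set $S:=C_{A'}(X)\cup\partial_{\mathrm{out}}C_{A'}(X)$. The classical strong-Markov property for i.i.d.\ site percolation gives that, conditionally on the exploration, $X$ restricted to $V\setminus S$ is i.i.d.\ Bernoulli$(p)$, while $|C_{A'}(X)|$ is measurable with respect to the exploration data. Decompose
\begin{equation*}
  |C_A(X)|=|C_{A'}(X)|+|C_{i_n}(X)\setminus C_{A'}(X)|
\end{equation*}
and factor out the first term, which is measurable with respect to the exploration.

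It remains to show that, almost surely with respect to the exploration, $E_p\bigl(e^{\lambda|C_{i_n}\setminus C_{A'}|}\,\big|\,\mathrm{exploration}\bigr)\le E_p\bigl(e^{\lambda|C_{i_n}(X)|}\bigr)$. There are three cases: if $i_n\in C_{A'}(X)$, then $C_{i_n}\setminus C_{A'}=\varnothing$ and the left-hand side equals $1$; if $i_n\in\partial_{\mathrm{out}}C_{A'}(X)$, then $X_{i_n}=0$ is already revealed, so $C_{i_n}(X)=\varnothing$; otherwise $i_n\in V\setminus S$ and, conditionally on the exploration, $C_{i_n}(X)$ is the cluster of $i_n$ in the subgraph of $G$ induced by $V\setminus S$ under i.i.d.\ Bernoulli$(p)$, whose size is pointwise dominated by the cluster of $i_n$ in $G$ because any open path in the subgraph is an open path in $G$. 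Taking expectations and applying the induction hypothesis to $A'$ then yields the product bound. The only substantive point is the correct setup of the exploration so that the residual configuration is a fresh i.i.d.\ sample independent of the revealed data; this is entirely standard, so I do not expect it to be the main obstacle---rather, the key conceptual observation is that FKG is unavailable here and must be replaced by a disjoint-exploration argument.
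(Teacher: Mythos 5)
Your proof is correct and is essentially the paper's argument: both rest on exploring some of the clusters, using the strong Markov property of i.i.d.\ site percolation plus monotonicity in volume to dominate the unexplored remainder by a fresh copy on the full graph, and then inducting on \(|A|\). The only (cosmetic) difference is the order of peeling: the paper reveals \(C_{a_1}(X)\) first and dominates \(|C_{A\setminus a_1}|\) of the remainder by an independent copy, whereas you reveal \(C_{A'}(X)\) for \(A'=A\setminus\{i_n\}\) and dominate the last cluster \(C_{i_n}\setminus C_{A'}\).
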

	\begin{proof}
		Fix some total order on \(A\), \(A=\{a_1,a_2,\dots\}\). Let \(Y\) be an independent copy of \(X\). The key is that \(|C_A(X)|\) is stochastically dominated by \(|C_{a_1}(X)|+|C_{A\setminus a_1}(Y)|\). Induction then gives the wanted result. Conditionally on \(C_{a_1}(X)\), let \(\overline{C_{a_1}}(X) = \{a_1\}\cup \{y:\ y\leftrightarrow a_1\text{ or } y\sim z, z\leftrightarrow a_1\}\) be the set of sites whose state is fixed by conditioning on \(C_{a_1}(X)\), and \(B = A\cap \overline{C_{a_1}}(X)\). One then has
		\begin{equation*}
			C_{A}(X) = C_{a_1}(X) \sqcup C_{A\setminus B}(X).
		\end{equation*}Moreover, by monotonicity in volume, \(C_{A\setminus B}(X)\) conditionally on \(C_{a_1}(X)\), is stochastically dominated by \(C_{A\setminus B}(Y) \subset C_{A\setminus a_1}(Y)\). This implies that \(|C_{A}(X)|\) is stochastically dominated by \(|C_{a_1}(X)| + |C_{A\setminus a_1}(Y)|\).
	\end{proof}

	\begin{lemma}
		\label{app:perco:moments_clusterSize}
		Let \(d\geq 1\). Let \(G=(V,E)\) be a connected graph of maximal degree \(d\). Let \(P_p\) be the site Bernoulli percolation measure of parameter \(p\) on \(G\). Let \(X\sim P_p\). Then, if \(p\in [0,1], \lambda\in \R\) satisfy \((1-e^{-1})pe^{\lambda} \leq e^{-c_d}\) where \(c_d\geq 0\) is a constant depending only on \(d\) such that the number of connected site-sub-graphs of \(G\) containing a fixed vertex and \(n\) sites is at most \(e^{c_dn}\),
		\begin{equation*}
			\sup_{i\in V} E_p\big(e^{\lambda|C_i|}\big) \leq e.
		\end{equation*}
	\end{lemma}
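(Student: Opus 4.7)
The strategy is the standard lattice-animal expansion for Bernoulli percolation: decompose the expectation according to the realized shape of the cluster $C_i$, bound each term by elementary probability, and then use the combinatorial input on the number of connected subgraphs to reduce everything to a geometric series.

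Concretely, I would fix $i \in V$ and write
\begin{equation*}
    E_p\big(e^{\lambda |C_i|}\big) = (1-p) + \sum_{\substack{S \ni i \\ S \text{ connected,finite}}} e^{\lambda |S|}\, P_p(C_i = S),
\end{equation*}
splitting off the event $\{X_i=0\}$ (on which $|C_i|=0$). On the event $\{C_i = S\}$ every site of $S$ is open, so $P_p(C_i = S) \leq p^{|S|}$. Reorganizing the sum over $S$ by cardinality and using the hypothesis that there are at most $e^{c_d n}$ connected subgraphs of $G$ of size $n$ containing $i$, I would obtain
\begin{equation*}
    E_p\big(e^{\lambda |C_i|}\big) \leq (1-p) + \sum_{n \geq 1} \big(p\, e^{\lambda}\, e^{c_d}\big)^n.
\end{equation*}

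Set $q := p\, e^{\lambda}\, e^{c_d}$. The assumption $(1-e^{-1}) p e^{\lambda} \leq e^{-c_d}$ is exactly $q \leq 1 - e^{-1}$, which in particular gives $q < 1$, so the geometric series converges and
\begin{equation*}
    \sum_{n \geq 1} q^n = \frac{q}{1-q} \leq \frac{1-e^{-1}}{e^{-1}} = e - 1.
\end{equation*}
Combining with the $(1-p) \leq 1$ term gives the bound $E_p(e^{\lambda|C_i|}) \leq 1 + (e-1) = e$, uniformly in $i \in V$.

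There is essentially no obstacle: the only technical point worth being explicit about is that the bound $P_p(C_i = S) \leq p^{|S|}$ suffices (one does not need the sharper factor enforcing closed outer boundary), and that the constant $c_d$ in the hypothesis is exactly the exponential growth rate of lattice animals on $G$, which is finite since $G$ has maximal degree $d$ (standard bounds give $c_d \leq \log(e d)$ or similar, though the proof does not need the explicit value).
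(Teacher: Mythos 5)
Your proof is correct and essentially identical to the paper's: a union bound over connected subgraphs of \(G\) containing \(i\), the crude bound \(P_p(C_i=S)\le p^{|S|}\), the animal count \(e^{c_d n}\), and summation of the resulting geometric series. One caveat: your claim that the hypothesis is ``exactly \(q:=pe^{\lambda}e^{c_d}\le 1-e^{-1}\)'' does not match the statement as literally written, which says \((1-e^{-1})pe^{\lambda}\le e^{-c_d}\), i.e.\ \(q\le(1-e^{-1})^{-1}>1\), under which the series need not converge; this is evidently a typo in the lemma (the paper's own proof displays the divergent series \(\sum_{n\ge0}(1-e^{-1})^{-n}\)), and your reading --- with the factor \(1-e^{-1}\) on the other side --- is the intended one and the one under which the bound \(\le e\) follows.
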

	\begin{proof}
		As \(G\) has maximal degree \(d\), the number of connected components containing \(i\) with size \(n\) is less than \(e^{c_d n}\) for some \(c_d<\infty\) depending only on \(d\). Then, by a union bound,
		\begin{equation*}
			E_p\big(e^{\lambda|C_i|}\big) \leq \sum_{n\geq 0} p^n e^{\lambda n}e^{c_d n} \leq \sum_{n\geq 0}(1-e^{-1})^{-n} = e,
		\end{equation*}where the last inequality is the assumption on \(p,\lambda\).
	\end{proof}
	
	With these two Lemmas, one can easily control the geometry of large clusters in a box with high density Bernoulli percolation on \(\Z^2\).
	\begin{lemma}
		\label{lem:app:good_box}
		Let \(\alpha\in (0,1)\). Then, for any \(p\) close enough to \(1\), and \(a\geq 1\), there is \(n_0\geq 1\) such that for any \(n\geq n_0\), if \(X\sim P_p\) is a Bernoulli site percolation on \(\Lambda = \llbracket 0,an \rrbracket\times \llbracket 0,n\rrbracket\) equipped with nearest-neighbour edges, one has
		\begin{equation*}
			P_p(A\cap B) \geq 1-e^{-n},
		\end{equation*}where
		\begin{gather*}
			A = \{\text{there is exactly one connected component of size }\geq n/4\},\\
			B= \big\{|\calC\cap \partial^{\rmi}\Lambda| \geq \alpha(2a+2)n\big\}
		\end{gather*}with \(\calC\) the largest connected component of \(\{x:\ X_x=1\}\).
	\end{lemma}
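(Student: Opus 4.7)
The plan is to exploit that for \(p\) close to \(1\), the complementary field \(X'=1-X\) is a very subcritical Bernoulli site percolation on the \(*\)-graph of \(\Z^2\) (degree \(8\)). By Lemma~\ref{app:perco:moments_clusterSize} applied to \(X'\), one can make the decay rate \(\lambda\) in \(\sup_x E_p[e^{\lambda|C_x^*|}]\leq e\) as large as desired by taking \(p\) close to \(1\), where \(C_x^*\) denotes the closed \(*\)-cluster of \(x\). A union bound over \(\Lambda\) then shows that the event \(E_1=\{\max_{x\in\Lambda}|C_x^*|\leq cn\}\) satisfies \(P(E_1^\comp)\leq e\cdot an^2\cdot e^{-\lambda cn}\leq e^{-n}/3\) as soon as \(\lambda c\) is large enough, and I work on \(E_1\) from here on.

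For event \(A\), existence of a component of size \(\geq n/4\) follows from a standard crossing argument: on \(E_1\) there is no closed \(*\)-cluster of diameter \(\geq n\), so by planar duality \(\Lambda\) admits a left--right open crossing, yielding a cluster of size \(\geq n\). Uniqueness is more delicate: two disjoint open clusters of size \(\geq n/4\) would have to be separated by a closed \(*\)-set -- either a \(*\)-circuit around the smaller one or a \(*\)-path with both endpoints on \(\partial\Lambda\). A Peierls count then bounds the bad probability by \(\sum_{\ell\geq\ell_0}|\Lambda|(8(1-p))^\ell\), where \(\ell_0\) lower-bounds the length of the separator. The main obstacle here is that the naive discrete isoperimetric bound only forces \(\ell_0\gtrsim\sqrt n\), giving an \(e^{-\sqrt n}\) tail rather than the desired \(e^{-n}\); matching \(e^{-n}\) requires strengthening the separator (e.g.\ by interpreting ``size \(\geq n/4\)'' as ``diameter \(\geq n/4\)'', which forces \(\ell_0\geq n/4\) and which is what the downstream use in Claim~\ref{claim:good_block_proba} effectively relies on through the ``Cross'' event). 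Either adjustment gives \(P(A^\comp)\leq e^{-n}/3\).

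The technical core is event \(B\). Set \(V=\partial^{\rmi}\Lambda\setminus\calC\); I need \(|V|\leq(1-\alpha)(2a+2)n\). On \(E_1\), every \(x\in V\) is either closed or lies in a pocket cut off from \(\calC\) by a closed \(*\)-cluster \(K\) touching \(\partial^{\rmi}\Lambda\); since \(K\) cannot span \(\Lambda\) on \(E_1\), a short geometric case analysis (endpoints of \(K\) on the same side, adjacent sides through a corner, or ruled out on opposite sides) bounds the cut-off arc of \(\partial^{\rmi}\Lambda\) by \(C_0|K|\) for a universal constant \(C_0\). Summing over such disjoint \(*\)-clusters yields the deterministic shadow bound
\[
|V|\;\leq\;(C_0+1)\,|C^*_{\partial^{\rmi}\Lambda}|,\qquad C^*_{\partial^{\rmi}\Lambda}\;:=\;\bigcup_{x\in\partial^{\rmi}\Lambda}C_x^*.
\]
Lemma~\ref{app:perco:setCluster_to_siteCluster} applied to \(X'\) then bounds \(E_p\bigl[e^{\mu|C^*_{\partial^{\rmi}\Lambda}|}\bigr]\leq\prod_{x\in\partial^{\rmi}\Lambda}E_p\bigl[e^{\mu|C_x^*|}\bigr]\leq e^{|\partial^{\rmi}\Lambda|}\), and a Markov inequality produces \(P(|V|\geq(1-\alpha)(2a+2)n)\leq\exp\bigl(-[(1-\alpha)\mu/(C_0+1)-1](2a+2)n\bigr)\), which is \(\leq e^{-n}/3\) once \(\mu\) (and hence \(p\)) is large enough. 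Combining the three \(e^{-n}/3\) bounds yields \(P(A\cap B)\geq 1-e^{-n}\).
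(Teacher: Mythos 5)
Your argument for event \(B\) is essentially the paper's own proof: the paper also defines the closed \(*\)-clusters \(\calG_x\) of boundary sites, bounds the number of ``shielded'' boundary vertices by a constant times the total size of these clusters (the paper uses the constant \(2\), your \(C_0\)), and then concludes via Lemma~\ref{app:perco:setCluster_to_siteCluster}, Lemma~\ref{app:perco:moments_clusterSize} and Markov's inequality exactly as you do; your event \(E_1\) plays the role the paper assigns to \(A\) in ruling out boundary-to-boundary closed \(*\)-crossings, and is if anything the cleaner choice. More importantly, the difficulty you flag in the \(A\) part is genuine and is present in the paper's proof as well: the paper simply asserts that \(A^{\comp}\) forces some vertex to lie in a closed \(*\)-cluster of cardinality at least \(n/4\), which is false if ``size'' means cardinality --- a closed \(*\)-circuit of length \(O(\sqrt n)\) enclosing \(n/4\) open sites produces a second large component at probability cost only \(e^{-O(\sqrt n)}\), so the stated \(1-e^{-n}\) bound cannot hold under that reading. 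Your proposed repair (read ``size'' as diameter, so that any closed \(*\)-separator between two such components must itself have diameter, hence cardinality, of order \(n/4\)) is the right one and matches how the lemma is actually invoked in Claim~\ref{claim:good_block_proba}, where everything is phrased through the crossing event and largest-\emph{diameter} clusters. So your proposal is the same argument as the paper's, together with a correct identification of, and fix for, a flaw in the statement.
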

	\begin{proof}
		Not that the closed sites form a percolation \(1-p\) which is as close to \(0\) as wanted. First, the event \(A^c\) implies that at least one vertex belongs to a closed *-cluster of size at least \(n/4\). Applying Lemma~\ref{app:perco:moments_clusterSize} with \(d=8\), \(\lambda = 13\), and using Chebychev's inequality, one obtains that for any \(p\) close enough to \(1\), and \(n\) large enough,
		\begin{equation*}
			P_p(A^c) \leq (an+1)(n+1)e^{-3n} \leq e^{-2n}.
		\end{equation*}
		
		Work now under the assumption that \(A\) is satisfied. Denote \(\calG_x\) the set of vertices closed in \(X\) and \(*\)-connected to \(x\) by closed vertices (in particular, if \(X_x=1\), \(\calG_x = \varnothing\)). Say that a vertex \(x\in \partial^{\rmi}\Lambda\) is \emph{shielded} if it is closed or it is surrounded by a closed \(*\)-path in \(X_{\Lambda}0_{\Lambda^c}\), see Figure~\ref{Fig:Good_Box_Perco}. The number of vertices shielded by the sites of \(\calG_x\) for a given \(x\in \partial^{\rmi}\Lambda\) is at most \(2|\calG_x|\) (as \(\calG_x\) can not cross \(\Lambda\) from one side to the opposite one when working in \(A\)). Now, if one looks at the set of sites of \(\partial^{\rmi}\Lambda\) that are not shielded, they are all connected together, and, provided there is one in two opposite faces, they are all in \(\calC\). In particular, the event \(B^c\cap A\) is included in the event that \(2\sum_{x\in \partial^{\rmi}\Lambda} |\calG_x| \geq (2a+2)n(1-\alpha) \) (recall \(|\partial^{\rmi}\Lambda| = (2a+2)n -4\)). Now applying Lemmas~\ref{app:perco:setCluster_to_siteCluster}, and~\ref{app:perco:moments_clusterSize} with \(d=8,\lambda= \frac{3}{1-\alpha}\), one obtains that for \(p\) close enough to \(1\),
		\begin{equation*}
			E\big(e^{\lambda \sum_{x\in \partial^{\rmi}\Lambda } |\calG_x| }\big)\leq e^{(2a+2)n},
		\end{equation*}in particular, by Chebychev's inequality, and the discussion before the previous display,
		\begin{equation*}
			P(A\cap B^c)\leq e^{-\lambda (a+1)n(1-\alpha)}e^{(2a+2)n} = e^{-(a+1)n}\leq e^{-2n}.
		\end{equation*}This concludes the proof as
		\begin{equation*}
			P(A\cap B) \geq 1- P(A^c) -P(A\cap B^c) \geq 1-2e^{-2n}.
		\end{equation*}
	\end{proof}

	\begin{figure}[h]
		\centering
		\includegraphics[scale=0.8]{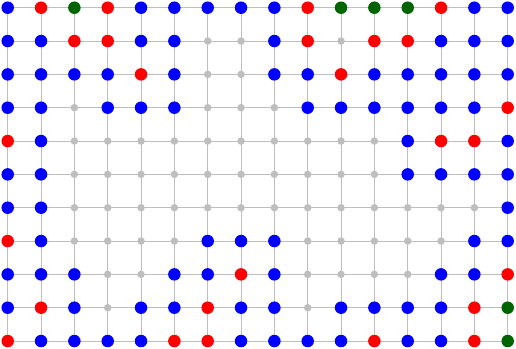}
		\caption{The \(*\)-clusters of closed sites touching the boundary are in red. The blue sites are the union of the non-shielded sites in the boundary with the ``\(*\)-inner boundary'' of the red sites. The green sites are the site that are open but shielded.}
		\label{Fig:Good_Box_Perco}
	\end{figure}
	
	\subsection{Boundary inhomogeneous percolation}
	\label{subsec:exp_dec_inhomo_bnd}
	
	This section contains a proof of exponential decay for (very) subcritical site Bernoulli percolation with inhomogeneities at the boundary of a square box. This is the result used in the Applications.
	
	\begin{lemma}
		\label{lem:exp_dec_inhomo_bnd}
		Let \(p<0.001\), and \(q\in [0,1)\). Then, there are \(c>0,C\geq 0\) such that for any \(n\geq 1\), and any \(x,y\in \Lambda_n \equiv \{-n,\dots, n\}^2\),
		\begin{equation*}
			P_{n; p,q}(x\leftrightarrow_* y)
			\leq
			Ce^{-c|x-y|}
		\end{equation*}where \(P_{n; p,q}\) is the law of a site Bernoulli percolation on \(\Lambda_n\) with parameters \(p\) in \(\Lambda_n \setminus \partial^{\rmi}\Lambda_n\), and \(q\) in \(\partial^{\rmi}\Lambda_n\).
	\end{lemma}
	\begin{proof}
		Let \(X\sim P_{n; p,q}\). Let \(\mathring{X}\) be the restriction of \(X\) to \(\Lambda_n \setminus \partial^{\rmi}\Lambda_n\) (a site percolation Bernoulli with parameter \(p\)). As \(p< 0.001\), one has that there is \(c>0\) such that for any \(u,v\in\Lambda_n \setminus \partial^{\rmi}\Lambda_n\),
		\begin{equation*}
			P\big(u\xleftrightarrow{\mathring{X}}_* v\big)
			\leq
			e^{-c|u-v|}.
		\end{equation*}
		In particular, for any \(x,y\in \Lambda_n\),
		\begin{align*}
			P\big(x\xleftrightarrow{X}_* y\big)
			&\leq
			P\big(x\xleftrightarrow{\mathring{X}}_* y\big) + \sum_{u,v\in \partial^{\rmi}\Lambda_n}P\big(\{x\xleftrightarrow{\mathring{X}}_* \sim u\} \circ\{u \xleftrightarrow{X}_* v\} \circ \{y\xleftrightarrow{\mathring{X}}_* \sim v\} \big)
			\\
			&\leq
			e^{-c|x-y|} + \sum_{u,v\in \partial^{\rmi}\Lambda_n}P\big(x\xleftrightarrow{\mathring{X}}_* \sim u\big) P\big(u \xleftrightarrow{X}_* v\big) P\big(y\xleftrightarrow{\mathring{X}}_* \sim v \big)
			\\
			&\leq
			e^{-c|x-y|} + 9\sum_{u,v\in \partial^{\rmi}\Lambda_n} e^{-c|x-u|} P\big(u \xleftrightarrow{X}_* v\big) e^{-c|y-v|}
		\end{align*}where \(x\leftrightarrow_* \sim y\) is the event that there is a \(*\)-connection from \(x\) to one of the \(*\)-neighbours of \(y\), \(\circ\) denotes disjoint occurrence, we used the BK inequality in the second inequality, and a union bound over the neighbours of \(u,v\) that are in \(\Lambda_n \setminus \partial^{\rmi}\Lambda_n\) in the last. To get the claim, it is sufficient to show that there is \(c>0\) such that for any \(x,y\in \partial^{\rmi}\Lambda_n\),
		\begin{equation*}
			P\big(x \xleftrightarrow{X}_* y\big)\leq e^{-c|x-y|}.
		\end{equation*}
		For \(v\in \partial^{\rmi}\Lambda_n\), let
		\begin{equation*}
			\calC_v = \{x\in \Lambda_n\setminus \partial^{\rmi}\Lambda_n:\ \exists y\in \Lambda_n\setminus \partial^{\rmi}\Lambda_n, \norm{y-v}_{\infty} = 1, x \xleftrightarrow{\mathring{X}}_* y\}.
		\end{equation*}\(\calC_v\) is measurable in terms of \(\mathring{X}\) and is contained in \(\Lambda_n\setminus \partial^{\rmi}\Lambda_n\).
		Define
		\begin{equation*}
			\calC_{\partial} = \cup_{v\in \partial^{\rmi}\Lambda_n}\calC_v.
		\end{equation*}Let now \(x,y\in \partial^{\rmi}\Lambda_n\). Call \(\{u,v\}\subset \partial^{\rmi}\Lambda_n\) a \emph{\((x,y)\)-pre-cut} if \(x,y\) lie in different \(*\)-connected components of \((\partial^{\rmi}\Lambda_n \setminus \{u,v\})\cup \calC_{\partial}\), see Figure~\ref{Fig:ExpDec_Bnd_Box_Perco}.
		\begin{figure}[h]
			\centering
			\includegraphics[scale=0.8]{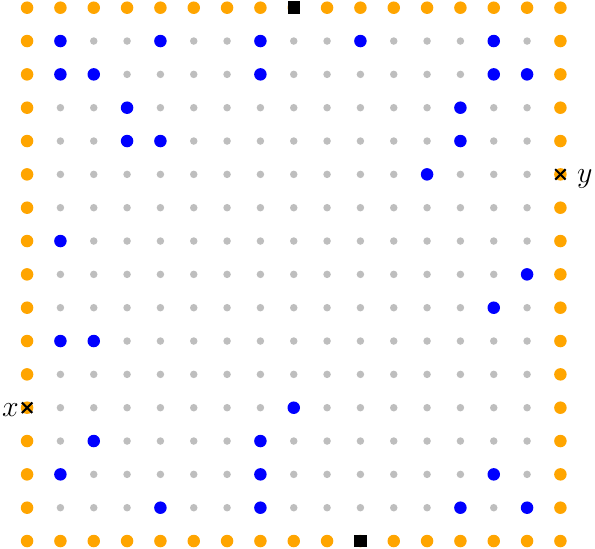}
			\caption{Orange disks: \(\partial^{\rmi}\Lambda_n\), blue disks: \(\calC_{\partial}\), black squares: a \((x,y)\)-pre-cut.}
			\label{Fig:ExpDec_Bnd_Box_Perco}
		\end{figure}
		Observe that conditionally on the existence of at least \(k\) disjoint \((x,y)\)-pre-cuts, the probability of \(x\leftrightarrow_* y\) is at most \((1-(1-q)^2)^k\) as at least one site in each of them has to be open in \(X\) to realize the connection. The wanted result is then reduced to showing that there is at least \(\rho |x-y|\) \((x,y)\)-pre-cuts with probability at least \(1-e^{-c|x-y|}\) for some \(c,\rho >0\).
		Let \(Y_v\), \(v\in \partial^{\rmi}\Lambda_n\), be an i.i.d. collection of \(\Z\)-valued random variables with
		\begin{gather*}
			P(Y_v = k) = P_p\big( X_v = 1, \max\{\norm{u}_{\infty} :\ u\leftrightarrow_* 0\} = k\big) \leq e^{-ck},\ k\geq 0,
			\\
			P(Y_v = -1) = P_p(X_v = 0),
		\end{gather*}where \(P_p\) is a Bernoulli site percolation of parameter \(p\) on \(\Z^2\), and \(c>0\) (this last fact is due to \(p<0.001\)). Define
		\begin{equation*}
			\calD = \bigcup_{v\in \partial^{\rmi}\Lambda_n} (v+\Lambda_{Y_v})
		\end{equation*}where \(\Lambda_{-1} = \varnothing\). This set stochastically dominates \(\calC_{\partial}\). In particular, the wanted bound on the presence of \((x,y)\)-pre-cuts is implied by showing that on each of the two paths going from \(x\) to \(y\) in \(\partial^{\rmi}\Lambda_n\), there are at least \(\rho |x-y|\) points that are not in \(\calD\) with probability at least \(1-e^{-c|x-y|}\) for some \(c,\rho>0\). But this last estimate is a simple one-dimensional percolation estimate for long range stick percolation with exponential tails. The reader in need is invited to look at the Appendix of~\cite{Ott+Velenik-2018} for details on how to implement this kind of argument.
	\end{proof}
	
	\begin{remark}
		The above argument can be improved up to \(p\) smaller than the critical point for \(*\)-percolation. The choice \(0.001\) is arbitrary and sufficient to implement a Peierls argument to obtain exponential bounds for the point-to-boundary percolation probabilities.
	\end{remark}

	\section{Conditional Factorization to Ratio Mixing}
	
	The following result is obtained by repeating the first part of~\cite[Section 5]{Alexander-1998} with only notational changes. This is done in details in the Appendix of~\cite{Dober+Glazman+Ott-2025}.
	\begin{lemma}
		\label{lem:app:mixing_to_ratioMixing}
		Let \(\Omega_i,\, i=1,2\) be finite sets. Let \(\Omega = \Omega_1\times \Omega_2\) and \(\calF_i =\{A\subset \Omega_i\}\), \(\calF = \{A\subset \Omega\}\). Let \(\mu,\nu\) probability measures on \((\Omega,\calF)\). Let
		\begin{equation*}
			\pi_i :\Omega \to \Omega_i,\quad \pi_i((\omega_1,\omega_2)) = \omega_i,\quad
			\mu_i = \mu\circ \pi_i^{-1}.
		\end{equation*}Let \(\epsilon_1,\epsilon_2,\epsilon_3\in [0,1)\). Suppose that all of the following hold:
		\begin{enumerate}
			\item Mixing of \(\mu,\nu\): for every \(\xi,\xi'\in \Omega_1\), \(A\subset \Omega_2\), \(\rho\in \{\mu,\nu\}\)
			\begin{equation*}
				|\rho(\Omega_1\times A\given \{\xi\}\times \Omega_2) - \rho(\Omega_1\times A\given \{\xi'\}\times \Omega_2)|\leq \epsilon_1
			\end{equation*}
			\item Proximity between second marginals: \(\mathrm{d}_{\mathrm{TV}}(\mu_2,\nu_2)\leq \epsilon_2\);
			\item Conditional equality: there exists an event \(D\subset \Omega_2\) such that for \(\rho\in \{\mu_2,\nu_2\}\), \(\rho(D) \geq 1-\epsilon_3\), and for any \(y\in D\),
			\begin{equation*}
				\frac{\mu(x,y)}{\mu_2(y)} = \frac{\nu(x,y)}{\nu_2(y)},\ \forall x\in \Omega_1.
			\end{equation*}
		\end{enumerate}Then, if \(\epsilon = \max(\epsilon_1,\sqrt{\epsilon_2},\epsilon_3) \leq 0.1\), for any \(x\in \Omega_1\) having positive probability under \(\nu_1\),
		\begin{equation*}
			1-9\epsilon \leq \frac{\mu_1(x)}{\nu_1(x)} \leq \frac{1}{1-9\epsilon}.
		\end{equation*}
	\end{lemma}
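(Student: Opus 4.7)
The strategy is to exploit hypothesis~3 pointwise on $D$: for $y\in D$ (with $\mu_2(y),\nu_2(y)>0$), the identity $\mu(x,y)/\mu_2(y) = \nu(x,y)/\nu_2(y)$ rewrites as $\mu(x,y) = (\mu_2(y)/\nu_2(y))\,\nu(x,y)$. If the density ratio $\mu_2/\nu_2$ were within $1+O(\epsilon)$ pointwise on a subset $D''\subset D$ carrying almost all the $\mu_2$-mass, summing over $D''$ would give $\mu_1(x)\approx \nu_1(x)$ at once. The obstacle is that hypothesis~2 only controls $\mu_2$ and $\nu_2$ in total variation, not pointwise, and this is exactly where the $\sqrt{\epsilon_2}$ threshold enters.

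The first step is to upgrade hypothesis~1: averaging the bound against $\rho_1(\xi')$ and using the tower property $\rho_2 = \sum_{\xi'}\rho_1(\xi')\,\rho(\cdot|\{\xi'\}\times\Omega_2)$ yields $\dTV(\rho(\cdot|\{\xi\}\times\Omega_2),\rho_2)\leq \epsilon_1$ for every $\xi$ and $\rho\in\{\mu,\nu\}$. Combined with $\mu_2(D^c)\leq \epsilon_3$, this gives $\mu(\Omega_1\times D^c|\{x\}\times\Omega_2)\leq \epsilon_3+\epsilon_1\leq 2\epsilon$, so $\mu(\{x\}\times D)\geq (1-2\epsilon)\mu_1(x)$; the analogous bound holds for $\nu$.

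Next, define the pointwise-comparison subset $D'' = \{y\in D : \mu_2(y) \leq (1+\sqrt{\epsilon_2})\nu_2(y)\}$. The identity $\sum_y(\mu_2(y)-\nu_2(y))_+ = \dTV(\mu_2,\nu_2) \leq \epsilon_2$, together with the inequality $\mu_2(y)-\nu_2(y) \geq \frac{\sqrt{\epsilon_2}}{1+\sqrt{\epsilon_2}}\mu_2(y)$ valid on $D\setminus D''$, yields $\mu_2(D\setminus D'') \leq 2\sqrt{\epsilon_2}\leq 2\epsilon$. Applying the TV bound from the previous step then gives $\mu(\{x\}\times(D\setminus D''))\leq (2\sqrt{\epsilon_2}+\epsilon_1)\mu_1(x) \leq 3\epsilon\,\mu_1(x)$, and hence $\mu(\{x\}\times D'') \geq (1-5\epsilon)\mu_1(x)$.

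On $D''$, hypothesis~3 gives $\mu(x,y) \leq (1+\sqrt{\epsilon_2})\,\nu(x,y)$ pointwise. Summing over $y\in D''$ and chaining the inequalities produces
\[
(1-5\epsilon)\,\mu_1(x) \;\leq\; \mu(\{x\}\times D'') \;\leq\; (1+\sqrt{\epsilon_2})\,\nu(\{x\}\times D'') \;\leq\; (1+\epsilon)\,\nu_1(x).
\]
For $\epsilon\leq 0.1$ the elementary inequality $(1+\epsilon)(1-9\epsilon)\leq 1-5\epsilon$ rearranges to $\mu_1(x)/\nu_1(x)\leq 1/(1-9\epsilon)$. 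The reverse inequality $\mu_1(x)\geq (1-9\epsilon)\nu_1(x)$ comes from running the same argument with $\mu,\nu$ swapped and $\tilde{D}'' = \{y\in D:\nu_2(y)\leq (1+\sqrt{\epsilon_2})\mu_2(y)\}$; this simultaneously guarantees $\mu_1(x)>0$ whenever $\nu_1(x)>0$, so the lower bound is not a vacuous $0\geq 1-9\epsilon$. The only step that demands real care is the choice of threshold $1+\sqrt{\epsilon_2}$: a tighter threshold discards too much $\mu_2$-mass, while a looser one destroys the pointwise comparison, and the $\sqrt{\epsilon_2}$ sweet spot is precisely what forces the $\sqrt{\epsilon_2}$ appearing in $\epsilon = \max(\epsilon_1,\sqrt{\epsilon_2},\epsilon_3)$.
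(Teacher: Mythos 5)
Your proof is correct: the upgrade of hypothesis~1 to \(\dTV(\rho(\cdot\given\{\xi\}\times\Omega_2),\rho_2)\leq\epsilon_1\) by averaging, the Markov-type argument turning the total-variation bound \(\epsilon_2\) into a pointwise density ratio \(1+\sqrt{\epsilon_2}\) off a set of \(\mu_2\)-mass at most \(2\sqrt{\epsilon_2}\), and the bookkeeping \((1-5\epsilon)\mu_1(x)\leq(1+\epsilon)\nu_1(x)\) with \((1+\epsilon)(1-9\epsilon)\leq 1-5\epsilon\) all check out, including the non-vacuousness of the lower bound via the symmetric argument. The paper itself does not write out a proof of Lemma~\ref{lem:app:mixing_to_ratioMixing} (it defers to \cite[Section 5]{Alexander-1998} and the appendix of \cite{Dober+Glazman+Ott-2025}), and your argument is essentially the intended one, the \(\sqrt{\epsilon_2}\) threshold being exactly the mechanism that produces the \(\sqrt{\epsilon_2}\) in the definition of \(\epsilon\).
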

	
	\bibliographystyle{plain}
	\bibliography{BibTeX}
	
\end{document}